\documentclass[10pt]{amsart}
\usepackage{latexsym,amsmath,amssymb}
\usepackage{mathrsfs}
\usepackage{mathabx}
\usepackage{color}
\usepackage{indentfirst}
\setlength\evensidemargin{.5in}
\setlength\textheight{8.1in}
\setlength\textheight{44cc}
\setlength\textwidth{30cc}
\setlength\topmargin{0in}
\setlength\parskip{5pt}
\setlength{\parindent}{2em}

\renewcommand{\epsilon}{\varepsilon}


\newtheorem{theorem}{Theorem}[section]
\newtheorem{Lemma}{Lemma}[section]
\newtheorem{proposition}{Proposition}[section]

\numberwithin{equation}{section}



\newcommand{\bth}{\begin{theorem}}
	\newcommand{\ble}{\begin{lemma}}
		\newcommand{\bcor}{\begin{corr}}
			\newcommand{\bdeff}{\begin{deff}}
				\newcommand{\bprop}{\begin{proposition}}
					\def\be{\begin{equation}}
						\def\ee{\end{equation}}
					\def\bt{\begin{theorem}}
						\def\et{\end{theorem}}
					\def\ba{\begin{array}}
						\def\ea{\end{array}}
					\def\bl{\begin{lemma}}
						\def\el{\end{lemma}}
					

					\newcommand{\ele}{\end{lemma}}
				\newcommand{\ecor}{\end{corr}}
			\newcommand{\edeff}{\end{deff}}
		
		\newcommand{\eprop}{\end{proposition}}

	\renewcommand{\Pi}{\varPi}

	\renewcommand{\epsilon}{\varepsilon}
	\newcommand{\sgn}{{\text {sgn}}}

\title[Global well-posedness of fractional Schr\"odinger equations] {Global well-posedness of cubic fractional Schr\"odinger equations in one dimension}

\date{\today}

\begin{document}
	\maketitle

	\centerline{
		\author{
			Huali Zhang
			\footnote{
				Department of Mathematical Sciences, Changsha University of Science and Technology, Changsha 410114, China.
				{\it Email: huali.zhang@aei.mpg.de}}
		}
		\and
		  Shiliang Zhao
		  \footnote{ School of Mathematical Sciences, Sichuan University, Chengdu 610064, Peoples Republic of China.
		 {\it Email: zhaoshiliang@scu.edu.cn}
		 }
	}
		
\begin{abstract}
In this paper, we consider the Cauchy's problem of global existence and scattering behavior of small, smooth, and localized solutions of cubic fractional Schr\"odinger equations in one dimension,
\begin{equation*}
		\mathrm{i} \partial_t u- (-\Delta)^{\frac{\alpha}{2}} u=c_*|u|^2u,
\end{equation*}
where $\alpha \in (\frac{1}{3},1), c_* \in \mathbb{R}$. Our work is a generalization of the result due to Ionescu and Pusateri \cite{IP}, where the case $\alpha=\frac{1}{2}$ was considered. The highlight in this paper is to give a modified dispersive estimate in weighted Sobolev spaces for cubic fractional Schr\"odinger equations, which could be used for $ \alpha \in (\frac{1}{3},1)$. Based on this modified dispersive estimate, we prove the global existence and modified scattering behavior of solutions combining space-time resonance and bootstrap arguments. 
\end{abstract}
		
{\bf Keywords: }
			fractional Schr\"odinger equations, bootstrap arguement, gloabl existence,  dispersive estimate, space-time resonance.

			{\bf MSC[2010]: } 35A01,\ 35Q55

	
\section{Introduction and Main results}
In this paper we study the Cauchy's problem of
nonlinear fractional Schr\"odinger equations
\begin{eqnarray}\label{NFSE}
	\left
	\{
	\begin{array}{l}
		\mathrm{i} \partial_t u- (-\Delta)^{\frac{\alpha}{2}} u=c_*|u|^2u, \ (t,x) \in \mathbb{R}^+ \times \mathbb{R},\\
		u|_{t=0}=u_0(x),\\
		
	\end{array}
	\right.
\end{eqnarray}
where $u: \mathbb{R}^+ \times \mathbb{R}\rightarrow \mathbb{C}$, \ $u_0(x): \mathbb{R}\rightarrow \mathbb{C}$
 and $ c_* \in \mathbb{R}$.

The model \eqref{NFSE} was shown by Laskin \cite{Laskin} in quantum mechanics. Namely, the quantum mechanics path integral over Brownian trajectories leads to 
classical Schr\"odinger equations ($\alpha=2$), and the path integral over
L$\mathrm{\acute{e}}$vy trajectories leads to  fractional Schr\"odinger equations ($1<\alpha<2$).

The Cauchy's problem for nonlinear classical Schr\"odinger equations has been studied extensively. For instances, to prove the global existence of Cauchy's problems for a class of classical Schr\"odinger equations
\begin{equation}\label{1111}
	\mathrm{i} \partial_t u- \Delta u=|u|^{p-1}u, \ (p>1),
\end{equation}
there are basically two approaches, namely, Strichartz estimates and vector-fields methods. In the aspect of Strichartz estimates,  the dacay estimate of linear part possesses fundamental importance. For $p$ is large enough, 
one can use Strichartz estimates to prove the global existence of small solutions; for small $p$, the structure of nonlinear terms starts to play a crucial role. More precisely, two values of $p$ are particularly important--the Strauss exponent $\frac{\sqrt{n^2+12n+4}+n+2}{2n}$ \cite{Strauss} and the short range exponent $1+\frac{2}{n}$. For $p$ is larger than the Strauss exponent, it has been proved by Strauss \cite{Strauss} that global solutions exist of  \eqref{1111} for small initial data, and furthermore, the solution scatters in a certain sense. For $p>1+\frac{2}{n}$, the solution becomes asymptotically free, and wave operators can be
constructed in general for small data in \cite{GOV,Nakanishi}. 
But the situation is quite different for $1<p<1+\frac{2}{n}$, and we refer the readers to see the interesting work \cite{Ozawa, Haya1} of Ogawa and Hayashi-Naumkin respectively. Meanwhile, McKean and Shatah in \cite{MS} proved the global existence of solutions to \eqref{1111} by using vector-fields methods. The key in their paper is to prove global Sobolev inequalities, which is similar to that for the wave equation obtained in \cite{K} by Klainerman. 
For $p=1+\frac{2}{n}$, Germain etc. in \cite{GMS2} proved the global existence and scattering of small solutions for 2D quadratic Schr\"odinger equations using the idea of space-time resonance, where the Fourier analysis they developed is essentially a new point. For other nonlinearities relying on $u,\bar{u},\nabla u, \nabla \bar{u}$ to \eqref{1111}, there are some known results addressing global existence and asymptotic behavior of solutions. The readers could see \cite{CW,GMS,HN,Kawahara,C,D,HMP,W,Z} for referring.

However, concerning the Cauchy's problem of linear fractional Schr\"odinger equations
\begin{equation}\label{lfse}
	\mathrm{i} \partial_t u- (-\Delta)^{\frac{\alpha}{2}} u=F(t,x), \quad (t,x) \in \mathbb{R}^+ \times \mathbb{R}^n,
\end{equation}
the vector-fields method for classical Schr\"odinger equations may not be used directly and Stricharz estimates are also not apparent. Despite that, there are some key developments for nonlinear fractional Schr\"odinger equations. For instances, Guo and Huo in \cite{GH} showed that global solutions to \eqref{NFSE} exist in one dimension for $1<\alpha <2$, where tri-linear estimates in Bourgain space played a important role. Later, Guo etc. in \cite{GSWZ} proved the global
existence of radial solutions  in critical energy spaces to \eqref{lfse} ($\alpha \in (\frac{n}{n-1},1), n\geqslant2$) with power-type nonlinearity. Recently, Cho etc. in \cite{Cho}  proved inhomogeneous Strichartz estimates for \eqref{lfse} ($n \geqslant 2$) in the radial case, and they applied this to establish the global well-posedness theory for a class of potential nonlinear fractional Schr\"odinger equations. 
There are also some other interesting results for nonlinear fractional Schr\"odinger equations, see references 
\cite{GqZ,BHL,GHX} 
for details.

For Cauchy's problem \eqref{NFSE}, we are asking for which $\alpha$ we can obtain a global solution in some Sobolev spaces, given that initial data satisfy smallness conditions.
Since the pointwise decay is just $t^{-\frac{1}{2}}$, it seems that one could only prove almost global existence of small solutions. Ionescu and Pusateri presented a pioneer work for us in \cite{IP}. Establishing dispersive linear estimates and identifying a suitable nonlinear logarithmic correction, they proved the global existence and scattering of small, smooth solutions of \eqref{NFSE} for $\alpha=\frac{1}{2}$.
However, the long time behavior of solutions to \eqref{NFSE} is not very clear for $\alpha \in (0,\frac{1}{2})\cup (\frac{1}{2},1)$. In case $\alpha \in (0,\frac{1}{2})\cup (\frac{1}{2},1)$, we could not get the conclusion of global small solutions if we use directly dispersive estimates devised in \cite{IP}. Those push us to establish appropriate dispersive linear estimates to equations \eqref{NFSE} for general $\alpha$. Considering different value of $\alpha$, the stationary point of phase is different. At the same time, we also need enough time decay rate in dispersive estimates. Therefore, how to set the time decay rate for general $\alpha$ plays a crucial role. Exploring stationary phase methods deeply, we give a modified dispersive estimate (see Lemma 3.1), which is suitable for all $\alpha\in (\frac{1}{3},1)$. We then use it to prove the global existence of small solutions for \eqref{NFSE} by bootstrap arguments and energy estimates. In this process, the $Z$ norm(the definition of $Z$ see \eqref{Z}) of middle frequency of solution is the most difficult part (see \eqref{41} and Lemma \ref{reduction}). Considering nonlinear structures of \eqref{NFSE}, we could go through these difficulties using space-time resonance analysis. Here, the concrete Fourier analysis techniques is a developing idea of Germain-Mausmoudi-Shatah and Ionescu-Pusateri. 

The goal of this paper is to prove the global existence and scattering behavior of small solutions for cubic fractional Schr\"odinger equations in one dimension for $\alpha \in (\frac{1}{3},1)$. 
Our result is as follows.
\begin{theorem}\label{GFS}
	Let $\alpha \in (\frac{1}{3},1)$ and $N_0=100$. For any fixed $\alpha$, we choose $p_0 \in (0, \frac{1}{1000}]$ such that $\alpha \leq \frac{1}{1+2p_0}$.

Assume $0<\varepsilon_0<1$ and $u_0 \in H^{N_0}(\mathbb{R})$ satisfying
	\begin{equation}
		||u_0||_{H^{N_0}(\mathbb{R})}+||x \cdot \partial u_0||_{L^2(\mathbb{R})}+||u_0||_{Z} \leq \varepsilon_0,
	\end{equation}
where
	\begin{equation}\label{Z}
	Z:=\big\{g\in {H^{4}(\mathbb{R})}:||g||_Z=||(1+|\xi|)^{10} \hat{g}(\xi)||_{L_{\xi}^{\infty}} <\infty \big\}.
	\end{equation}
	Then there is a unique global solution $u \in C([0,+\infty), H^{N_0}(\mathbb{R}))$ of Cauchy's problem \eqref{NFSE}.

	In addition, let $f(t)=e^{\mathrm{i}t(-\Delta)^{\frac{\alpha}{2}}}u(t)$. Then there exists a universal positive constant $C$ such that the following bound
	\begin{equation}
		\sup_{t\geq 0}\big\{(1+t)^{-p_0}||f(t)||_{H^{N_0}(\mathbb{R})}+(1+t)^{-p_0}||x \cdot \partial f||_{L^2(\mathbb{R})}\big\}\leqslant C \varepsilon_0
	\end{equation}
hold. Moreover, the solution possesses the scattering behavior: there is $p_1>0$ and $w \in L^\infty$ with the property that 
	\begin{equation}
		\sup_{t \in [0,\infty)}\left(1+t\right)^{p_1}||(1+|\xi|)^{10}e^{iH(\xi,t)} \hat{f}(\xi,t)-w(\xi)||_{L_{\xi}^{\infty}} \leqslant C \varepsilon_0,
	\end{equation}
where
\begin{equation}\label{H}
H(\xi,t):=c_0c_*|\xi|^{2-\alpha}\int^t_0|\hat{f}(\xi,r)|^2\frac{dr}{r+1}, \ c_0=\frac{2\pi}{\alpha(1-\alpha)}.
\end{equation}
\end{theorem}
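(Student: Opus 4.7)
The plan is to set up a bootstrap argument on the profile $f(t) = e^{it(-\Delta)^{\alpha/2}} u(t)$, which in Fourier space satisfies an equation of the form $\partial_t \hat f(\xi,t) = c_* \int e^{it\Phi(\xi,\eta,\sigma)}\hat f(\eta)\overline{\hat f(\sigma)}\hat f(\xi-\eta+\sigma)\,d\eta\,d\sigma$, where $\Phi(\xi,\eta,\sigma) = |\xi|^\alpha - |\eta|^\alpha + |\sigma|^\alpha - |\xi-\eta+\sigma|^\alpha$. I would fix a small constant $\varepsilon_1 = C\varepsilon_0$ and assume on a maximal existence interval $[0,T]$ the bootstrap assumptions
\begin{equation*}
(1+t)^{-p_0}\|f(t)\|_{H^{N_0}} + (1+t)^{-p_0}\|x\,\partial f(t)\|_{L^2} + \|(1+|\xi|)^{10}\hat f(\xi,t)\|_{L^\infty_\xi} \leq \varepsilon_1,
\end{equation*}
with the goal of improving every bound to $\varepsilon_1/2$. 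Local well-posedness in $H^{N_0}$ is standard, so the global statement reduces to closing the bootstrap.

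First, I would use Lemma~3.1 (the modified dispersive estimate) to convert the $Z$-norm control of $\hat f$ into a pointwise bound $\|u(t)\|_{L^\infty_x} \lesssim \varepsilon_1 (1+t)^{-1/2}$, with corrections that are admissible in the range $\alpha\in(\tfrac13,1)$ thanks to the careful treatment of the degenerate stationary point of $|\xi|^\alpha$ near $\xi = 0$. With this decay in hand, the energy estimate for $\|f\|_{H^{N_0}}$ is obtained by differentiating and using the cubic nonlinearity: $\frac{d}{dt}\|f\|_{H^{N_0}}^2 \lesssim \|u\|_{L^\infty}^2 \|f\|_{H^{N_0}}^2 \lesssim \varepsilon_1^2 (1+t)^{-1}\|f\|_{H^{N_0}}^2$, which yields the $(1+t)^{p_0}$ growth and improves the constant provided $p_0$ is chosen compatibly with the implicit constants. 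The weighted estimate for $\|x\,\partial f\|_{L^2}$ is treated similarly, commuting the weight $x = \mathcal{F}^{-1}(i\partial_\xi)$ through the nonlinearity and paying one derivative on one factor of $u$ using the same $L^\infty$ decay.

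The main obstacle, as the authors emphasize, is improving the $Z$-norm on middle frequencies. The strategy is space-time resonance analysis: decompose the oscillatory integral representing $\partial_t \hat f(\xi,t)$ via dyadic cutoffs into high/low frequency pieces, and, on each piece, perform either integration by parts in $\eta, \sigma$ (off the space resonance set $\nabla_{\eta,\sigma}\Phi = 0$) or in $t$ (off the time resonance set $\Phi = 0$). For $\Phi(\xi,\eta,\sigma) = |\xi|^\alpha - |\eta|^\alpha + |\sigma|^\alpha - |\xi-\eta+\sigma|^\alpha$, the space-time resonant set consists of $\{\eta = \xi, \sigma = \xi\}$ (and its symmetric image), and a Morse-type stationary phase expansion around this point extracts the leading order contribution
\begin{equation*}
\partial_t \hat f(\xi,t) = \frac{c_0 c_* |\xi|^{2-\alpha}}{t}|\hat f(\xi,t)|^2 \hat f(\xi,t) + R(\xi,t), \qquad c_0 = \frac{2\pi}{\alpha(1-\alpha)},
\end{equation*}
with an error $R(\xi,t)$ that decays as $t^{-1-p_1}$ in $L^\infty_\xi$ after using the bootstrap control and the modified dispersive estimate. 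Conjugating by the logarithmic phase $e^{iH(\xi,t)}$ with $H$ as in \eqref{H} cancels the resonant term exactly, and integration of the remaining equation $\partial_t(e^{iH(\xi,t)}\hat f) = e^{iH(\xi,t)} R$ yields both the improved $Z$-norm bound and the existence of a limit $w(\xi) \in L^\infty$ with convergence rate $(1+t)^{-p_1}$.

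Closing the three bootstrap inequalities extends the solution to $[0,\infty)$ by continuity, and the modified scattering statement follows directly from the convergence argument in the last step. The principal technical difficulty is the resonance analysis on middle frequencies, where neither integration by parts in $(\eta,\sigma)$ nor in $t$ is cheap because the phase $\Phi$ and its gradient both degenerate near the resonant point; the passage through this region is where the precise form of the phase $H$ and the constant $c_0 = 2\pi/[\alpha(1-\alpha)]$ are pinned down via the Hessian of $\Phi$ at the resonance, and where the restriction $\alpha > 1/3$ enters through the integrability of the error terms produced by the stationary phase remainder.
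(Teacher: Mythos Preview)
Your outline matches the paper's strategy closely: local existence plus a bootstrap on $\|f\|_{H^{N_0}}$, $\|x\partial f\|_{L^2}$, and the $Z$-norm, with the dispersive estimate feeding the $L^\infty$ decay, and the $Z$-norm closed by extracting the resonant contribution via stationary phase around the space-time resonant point and removing it with the logarithmic phase $H$. The identification of $c_0=2\pi/[\alpha(1-\alpha)]$ via the Hessian of the phase is exactly how the paper proceeds in Lemma~5.7.

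Two points deserve correction. First, the weighted estimate is \emph{not} handled in the paper by ``commuting $x=\mathcal{F}^{-1}(i\partial_\xi)$ through the nonlinearity.'' Doing so directly would make $\partial_\xi$ hit the oscillatory factor $e^{is\Psi}$ and produce a term of size $s\,\partial_\xi\Psi$, which grows in time and cannot be absorbed by paying one derivative on a single factor. The paper instead introduces the scaling vector field $S=\alpha t\partial_t+x\partial_x$, uses the commutator identity $[S,\,i\partial_t-(-\Delta)^{\alpha/2}]=-\alpha(i\partial_t-(-\Delta)^{\alpha/2})$ to run an energy estimate on $Su$, and then recovers $\|x\partial f\|_{L^2}$ from the relation $Su=e^{-it(-\Delta)^{\alpha/2}}(x\partial f)+\alpha t\,e^{-it(-\Delta)^{\alpha/2}}\partial_t f$ together with the already-established cubic bound on $\partial_t f$. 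Second, the restriction $\alpha>\tfrac13$ does not arise from the stationary-phase remainder in the resonance analysis; it enters solely in the proof of the dispersive estimate (Lemma~3.1), where the term $t^{-1/4}2^{k(\frac12-\frac{3\alpha}{4})}$ must be summed over the admissible dyadic range, and this is bounded precisely when $\alpha>\tfrac13$.
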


\textbf{Notations:}
	The notation $A \lesssim B$ means $A \leq CB$ for some universal
	constant $C$ independent with $A,B$.  We also denote $k^+:=\max\{0,k\}$ for integer $k$.

The plan of this paper is as follows: in Section 2, we prove Theorem 1.1 as a result of a bootstrap argument based on the local existence and dispersive estimate. In Section 3, we prove the dipersive estimate using stationary-phase method. In Section 4, we introduce some preliminary estimates. Combining basic energy estimates and space-time resonance, we conclude the bootstrap argument in Section 5.
\section{Proof of Theorem 1.1}
We plan to prove Theorem 1.1 using local existence theorem and bootstrap arguements. To begin with, we have
\begin{theorem} \textbf{Local well-posedness}\label{localthm}
	(1) For given $u_0\in {H^{4}(\mathbb{R})}$, there exists a constant $T_0>0$ determined by $ ||u_0||_{H^{4}(\mathbb{R})} $ and a unique solution
	$u \in C([-T_0,T_0], H^4)$ to Cauchy's problem \eqref{NFSE}.

	(2) Let $N\geq 4$, $u_0 \in H^N(\mathbb{R})$,  
	and $u \in C([-T_0,T_0], H^4)$ be a solution in (1). Hence, $u \in C([-T_0,T_0], H^N)$ and
	\begin{equation}\label{local}
		||u(t_2)||_{H^{N}}-||u(t_1)||_{H^{N}}\lesssim \int^{t_2}_{t_1}||u(s)||_{H^{N}}||u(s)||^2_{L^\infty}ds
	\end{equation}
	for any $t_1\leq t_2, t_1, t_2 \in [-T_0, T_0]$.
	
\end{theorem}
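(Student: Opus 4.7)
The plan is to prove part (1) by a standard contraction on the Duhamel formulation in $C([-T_0,T_0]; H^4(\mathbb{R}))$, and part (2) by an $H^N$ energy estimate combined with a smooth approximation argument. The key analytic fact throughout is that the symbol $e^{-\mathrm{i}t|\xi|^\alpha}$ has modulus one, so the linear propagator $e^{-\mathrm{i}t(-\Delta)^{\alpha/2}}$ is an isometry on every $H^s(\mathbb{R})$; all Sobolev growth is forced to come from the nonlinearity.

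For part (1), I would define
$$\Phi(u)(t) := e^{-\mathrm{i}t(-\Delta)^{\alpha/2}}u_0 - \mathrm{i}c_*\int_0^t e^{-\mathrm{i}(t-s)(-\Delta)^{\alpha/2}}(|u|^2u)(s)\,ds$$
and seek its fixed point in the closed ball
$$X_{T_0,R} = \{u \in C([-T_0,T_0]; H^4(\mathbb{R})) : \|u\|_{L^\infty_T H^4} \leq R\}, \qquad R := 2\|u_0\|_{H^4}.$$
Since $4 > 1/2$, $H^4(\mathbb{R})$ is a Banach algebra and embeds continuously in $L^\infty(\mathbb{R})$, so $\||u|^2u\|_{H^4} \lesssim \|u\|_{H^4}^3$ along with the matching Lipschitz bound $\||u|^2u - |v|^2v\|_{H^4} \lesssim (\|u\|_{H^4}^2 + \|v\|_{H^4}^2)\|u-v\|_{H^4}$. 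Combined with unitarity, choosing $T_0 \sim (1+\|u_0\|_{H^4})^{-2}$ makes $\Phi$ a strict contraction on $X_{T_0,R}$, producing the unique $H^4$ solution.

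For part (2), the plan is to apply $\langle \partial_x \rangle^N$ to the equation, pair with $\langle \partial_x \rangle^N u$ in $L^2$, and take real parts. The linear piece vanishes because $(-\Delta)^{\alpha/2}$ is self-adjoint, and the nonlinear piece is controlled by the tame Moser inequality $\||u|^2u\|_{H^N} \lesssim \|u\|_{L^\infty}^2 \|u\|_{H^N}$, obtained from Kato-Ponce-type product rules together with $H^4 \hookrightarrow L^\infty$. Integrating this yields \eqref{local}. To rigorously justify the manipulations for the possibly low-regularity $H^4$ solution, I would approximate $u_0$ by $u_0^\varepsilon \in H^\infty(\mathbb{R})$ with $u_0^\varepsilon \to u_0$ in $H^N$, use part (1) together with continuous dependence to obtain smooth solutions $u^\varepsilon$ on the common interval $[-T_0,T_0]$, perform the $H^N$ energy estimate at the smooth level, apply Gr\"onwall against the already bounded quantity $\|u^\varepsilon\|_{L^\infty}^2 \lesssim \|u^\varepsilon\|_{H^4}^2$, and pass to the limit using uniqueness of the $H^4$ flow. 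The main obstacle is establishing the tame Moser estimate and, relatedly, securing persistence on the full original interval with $T_0$ depending only on $\|u_0\|_{H^4}$; the crucial observation is that the $L^\infty$ factor in the Moser inequality is already controlled by the $H^4$ solution from part (1), so the Gr\"onwall argument for $\|u\|_{H^N}$ closes on the same interval without any further smallness condition on $\|u_0\|_{H^N}$.
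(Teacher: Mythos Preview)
Your proposal is correct and follows essentially the same route as the paper: a contraction mapping on the Duhamel formulation in a ball of $C([-T_0,T_0];H^4)$ for part~(1), and for part~(2) the tame product estimate $\||u|^2u\|_{H^N}\lesssim \|u\|_{L^\infty}^2\|u\|_{H^N}$ together with unitarity of $e^{-\mathrm{i}t(-\Delta)^{\alpha/2}}$. The only cosmetic difference is that the paper reads off \eqref{local} directly from the Duhamel integral, whereas you obtain it via a differential energy identity and a regularization argument; both are standard and yield the same bound.
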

\begin{proof}
	Using a standard fixed-point theorem in space $$X=\left\{ v\in C[-T,T],H^4(\mathbb{R}):  \sup_{[-T_0,T_0]}||v(t)||_{H^4}\leq 3||u_0||_{H^4}\right\},$$
then there a unique solution
$u \in C([-T_0,T_0], H^4)$ to Cauchy's problem \eqref{NFSE}.
	
	The estimate \eqref{local} is from 
	\begin{equation*}
	u(t)=e^{it(-\Delta)^{-\frac{\alpha}{2}}}u_0-i\int^t_0 e^{i(t-s)(-\Delta)^{-\frac{\alpha}{2}}}c_*u(s)\bar{u}(s)u(s)ds.
	\end{equation*}
\end{proof}
\begin{proposition}\label{bootstrap1}
	Let $N_0=100, \ 0<\varepsilon_0<\varepsilon_1<\varepsilon_0^{\frac{2}{3}}<1$. Assume $u\in C([0,T],H^{N_0})$ $ (T>0) $ is a  solution for Cauchy's problem \eqref{NFSE}, and the initial data $u_0$
	satisfying
	\begin{equation*}
		||u_0||_{H^{N_0}(\mathbb{R})}+||x \cdot \partial u_0||_{L^{2}(\mathbb{R})}+||u_0||_{Z}\leq \varepsilon_0.
	\end{equation*}
	Let $f(t)=e^{\mathrm{i}t(-\Delta)^{\frac{\alpha}{2}}}u(t), t\in [0, T]$.


	Under assumptions of $p_0 \in (0, \frac{1}{1000}]$ and
	\begin{equation}\label{a1}
		\sup_{t \in [0,T]}\left\{(1+t)^{-p_0}||f(t)||_{H^{N_0}(\mathbb{R})}+(1+t)^{-p_0}||x\cdot{\partial f(t)}||_{L^{2}(\mathbb{R})}+||f(t)||_{Z} \right\} \leq \varepsilon_1,
	\end{equation}
 we could get estimates
	\begin{equation}
		\sup_{t \in[0,T] }\left\{(1+t)^{-p_0}||f(t)||_{H^{N_0}(\mathbb{R})}+(1+t)^{-p_0}||x\cdot{\partial f(t)}||_{L^{2}(\mathbb{R})}+||f(t)||_{Z}\right\} \lesssim \varepsilon_0,
	\end{equation}
	\qquad
and 
	\begin{equation}
		(1+t)^{p_1}
		\big| \big|
		(1+|\xi|)^{10} \left(e^{\textit{i}H(\xi,t)}
		\hat{f}(\xi,t)-e^{\textit{i}H(\xi,t_0)}\hat{f}(\xi,t_0) \right) \big| \big|_{L^{\infty}_\xi}
		\lesssim \varepsilon_0,\ \forall t \geq t_0, t, t_0\in[0,T].
	\end{equation}
	
\end{proposition}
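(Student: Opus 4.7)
The plan is to close the bootstrap by improving each of the three quantities in \eqref{a1} separately, and then to read the scattering estimate directly off the $Z$-norm analysis. Throughout I write $f(t)=e^{it(-\Delta)^{\alpha/2}}u(t)$ for the profile, so that $\|f\|_{H^{N_0}}=\|u\|_{H^{N_0}}$, and I treat the dispersive Lemma~3.1 as a black box providing an $L^\infty_x$ decay of $u(s)$ of order $(1+s)^{-1/2}$ controlled by $\|f\|_Z$ and low-order Sobolev norms.

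For the Sobolev norm I would plug the dispersive bound into the local energy identity \eqref{local}: the bootstrap hypothesis together with Lemma~3.1 gives $\|u(s)\|_{L^\infty}^2 \lesssim \varepsilon_1^2 (1+s)^{-1}$, so Gr\"onwall yields $\|f(t)\|_{H^{N_0}} \lesssim \varepsilon_0 (1+t)^{C\varepsilon_1^2}$, and choosing $\varepsilon_1$ small enough that $C\varepsilon_1^2<p_0$ delivers the improvement. For the weighted norm, the operator $x\cdot\partial$ corresponds on the Fourier side to $-\partial_\xi(\xi\,\cdot)$; differentiating the Duhamel identity and expanding the cubic nonlinearity trilinearly, the $\partial_\xi$ derivative either lands on the oscillating phase (producing a factor $\alpha t|\xi|^{\alpha-1}$ which is absorbed through two copies of the dispersive $L^\infty$ bound) or on one of the three profile factors (which then contributes the weighted norm while the other two supply dispersive decay). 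The resulting right-hand side is controlled by $\varepsilon_1^3(1+t)^{-1+2p_0}$ in $L^2$, and integration in time yields the improved $(1+t)^{p_0}\varepsilon_0$ bound.

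The hardest step is the $Z$-norm modified-scattering estimate. The profile satisfies
\begin{equation*}
\partial_t\hat f(\xi,t) = -ic_*\iint e^{it\Phi(\xi,\eta,\sigma)}\hat f(\eta,t)\hat f(\sigma,t)\overline{\hat f(\eta+\sigma-\xi,t)}\,d\eta\,d\sigma,
\end{equation*}
with $\Phi(\xi,\eta,\sigma)=|\xi|^\alpha-|\eta|^\alpha-|\sigma|^\alpha+|\eta+\sigma-\xi|^\alpha$. After a Littlewood--Paley decomposition of the three input frequencies, the very low and very high frequency pieces are dispatched by the bootstrap and Lemma~3.1, producing a right-hand side integrable in time as $(1+t)^{-1-p_1}$ pointwise in $\xi$. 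For the middle-frequency regime (flagged in the introduction and governed by \eqref{41} and Lemma~\ref{reduction}), the space-time resonant set is the diagonal $\eta=\sigma=\xi$, and a stationary-phase expansion there produces
\begin{equation*}
\partial_t\hat f(\xi,t) = -ic_0c_*|\xi|^{2-\alpha}\frac{|\hat f(\xi,t)|^2}{t+1}\hat f(\xi,t) + R(\xi,t),
\end{equation*}
with $c_0=2\pi/(\alpha(1-\alpha))$ coming from the Gaussian integral determined by the Hessian of $\Phi$ at the stationary point. The leading term is exactly $-i(\partial_tH)(\xi,t)\hat f(\xi,t)$, so setting $g(\xi,t):=e^{iH(\xi,t)}\hat f(\xi,t)$ gives $\partial_t g=e^{iH}R$; integrating from $t_0$ to $t$ delivers the second conclusion of the proposition, and the special case $t_0=0$ combined with $\|u_0\|_Z\le\varepsilon_0$ closes the $Z$-norm bootstrap via the triangle inequality.

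The main obstacle is the remainder estimate $\|(1+|\xi|)^{10}R(\xi,t)\|_{L^\infty_\xi}\lesssim\varepsilon_1^3(1+t)^{-1-p_1}$. Away from the diagonal one integrates by parts in $\eta$ or $\sigma$ to extract the extra decay, paying for the derivatives with the weighted $L^2$ norm of the profile and the high Sobolev norm, both of which are only controlled modulo a $(1+t)^{p_0}$ loss; the hypothesis $\alpha\le 1/(1+2p_0)$ is precisely what allows this loss to be absorbed against the $(1+t)^{-\alpha/2}$ gain produced by the one-dimensional stationary-phase expansion. Close to the diagonal but away from the temporal resonance one integrates by parts in $t$, using non-vanishing $\Phi$ to transfer a time derivative onto $\hat f$ and replacing it by the profile equation, which costs another power of $\varepsilon_1$ but gains the extra $(1+t)^{-1}$ needed for time integrability.
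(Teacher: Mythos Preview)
Your treatment of the $H^{N_0}$ energy and of the $Z$-norm is essentially the paper's: energy plus dispersive decay for the former, and Littlewood--Paley plus stationary phase at the space--time resonant point with the logarithmic phase correction $H(\xi,t)$ for the latter. Section~5 carries this out, with the middle-frequency remainder handled through a case split on $k_1,k_2,k_3$ and integration by parts in $\eta$, $\sigma$, or $s$ much as you describe (the time-integration-by-parts is used only in the regime where $k_1$ is very negative and space integration by parts is too weak, not ``near the diagonal but off the time resonance'' as you say, but this is minor).

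The genuine gap is your handling of $\|x\cdot\partial f\|_{L^2}$. Differentiating Duhamel directly in $\xi$ does not close as written: when $\partial_\xi$ hits the phase $e^{is\Psi}$ you pick up $is\,\partial_\xi\Psi$, and the resulting term carries an \emph{uncompensated} factor of $s$. Two dispersive $L^\infty$ bounds return only $(1+s)^{-1}$, so the integrand behaves like $\varepsilon_1^3(1+s)^{p_0}$ and time integration blows up like $(1+t)^{1+p_0}$, an entire power of $t$ too large. The paper does not attempt this route. Instead it introduces the scaling vector field $S=\alpha t\partial_t+x\partial_x$, which satisfies the exact commutator identity
\[
[S,\ i\partial_t-(-\Delta)^{\alpha/2}]=-\alpha\bigl(i\partial_t-(-\Delta)^{\alpha/2}\bigr),
\]
so that $Su$ obeys a clean energy inequality with right-hand side $\||u|^2u\|_{L^2}+\|S(|u|^2u)\|_{L^2}\lesssim\varepsilon_1^3(1+s)^{-1+p_0}$. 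One then recovers $x\partial_x f$ from the identity $Su=e^{-it(-\Delta)^{\alpha/2}}(x\partial_x f)+\alpha t\,e^{-it(-\Delta)^{\alpha/2}}\partial_t f$, the second piece being harmless because $\|\partial_t f\|_{L^2}=\||u|^2u\|_{L^2}\lesssim\varepsilon_1^3(1+t)^{-1+p_0}$. The $\alpha t\partial_t$ component of $S$ is precisely what cancels the bad $s$ your direct approach produces; equivalently, on the Fourier side one has $\xi\partial_\xi\Psi=\alpha\Psi+(\text{terms with }\eta,\sigma\text{ weights only})$, and the $\alpha\Psi$ piece must be converted to a time derivative and integrated by parts. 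Without this mechanism the weighted bootstrap does not close.
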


\section{Modified dispersive estimates}

In this part, we will prove dispersive estimates using phase decomposition methods, which is a modified version compared with the dispersive estimate for  $\alpha=\frac{1}{2}$ in \cite{IP}.
\begin{Lemma}\label{disperlem}
	Let $\alpha \in (\frac{1}{3},1)$. For $\forall t \in \mathbb{R}^+$, we have the dispersive estimate
	\begin{equation}\label{disper-equ1}
		||e^{\mathrm{i}t(-\Delta)^{\frac{\alpha}{2}}}f||_{L^\infty}\lesssim (1+t)^{-\frac{1}{2}}|||\xi|^{\frac{2-\alpha}{2}}\hat{f}(\xi)||_{L^{\infty}_{\xi}} + (1+t)^{-(\frac{1}{2}+p_0)}(||f||_{H^2}+ ||x \cdot \partial f||_{L^2}).
	\end{equation}
\end{Lemma}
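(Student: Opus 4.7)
The starting point is the Fourier representation
\[
[e^{\mathrm{i}t(-\Delta)^{\alpha/2}}f](x) \;=\; \frac{1}{2\pi}\int_{\mathbb{R}} e^{\mathrm{i}(x\xi + t|\xi|^\alpha)}\,\hat f(\xi)\,d\xi.
\]
By symmetry it suffices to treat the $\xi>0$ contribution; the phase $\phi(\xi) = x\xi + t\xi^\alpha$ has the unique critical point $\xi_0 = (|x|/(\alpha t))^{1/(\alpha-1)}$ exactly when $x<0$, with $\phi''(\xi_0) = -\alpha(1-\alpha)\,t\,\xi_0^{\alpha-2}$, hence the natural resonant scale $\delta = |\phi''(\xi_0)|^{-1/2}\sim t^{-1/2}\xi_0^{(2-\alpha)/2}$. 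For $t\leq 1$ the estimate is immediate from $H^1\hookrightarrow L^\infty$, so assume $t>1$.

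First I would cut off in frequency by setting $\lambda = t^{-1-2p_0}$ and $\Lambda = t^{(1+2p_0)/3}$, and decompose via a smooth partition adapted to $\{|\xi|\leq \lambda\}$, $\{\lambda\leq|\xi|\leq\Lambda\}$, $\{|\xi|\geq\Lambda\}$. Cauchy--Schwarz bounds the low piece by $\lambda^{1/2}\|f\|_{L^2}=t^{-1/2-p_0}\|f\|_{L^2}$, and a weighted Cauchy--Schwarz bounds the high piece by $\Lambda^{-3/2}\|\langle\xi\rangle^2\hat f\|_{L^2}=t^{-1/2-p_0}\|f\|_{H^2}$, both of which are absorbed into the second term of \eqref{disper-equ1}.

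The heart of the proof is the middle piece, treated by stationary phase. When $\xi_0$ lies outside a fixed thickening of $[\lambda,\Lambda]$ (in particular when $x\geq0$), one has $|\phi'(\xi)|\gtrsim |x|+t\xi^{\alpha-1}$ uniformly; one integration by parts via $e^{\mathrm{i}\phi}=(\mathrm{i}\phi')^{-1}\partial_\xi e^{\mathrm{i}\phi}$ together with the identity $\xi\partial_\xi\hat f = -\widehat{x\partial f}-\hat f$, which gives $\|\partial_\xi\hat f\|_{L^2(|\xi|\sim r)}\lesssim r^{-1}(\|x\partial f\|_{L^2}+\|f\|_{L^2})$, yields the required $t^{-1/2-p_0}$ decay after inserting the definitions of $\lambda,\Lambda$. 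When $\xi_0\in[\lambda/2,2\Lambda]$, I split $[\lambda,\Lambda]$ into the resonant window $|\xi-\xi_0|\leq\delta$ and its complement. On the window, write $\hat f(\xi) = \hat f(\xi_0) + (\hat f(\xi)-\hat f(\xi_0))$; the constant piece produces the standard Gaussian integral
\[
C\,t^{-1/2}\,\xi_0^{(2-\alpha)/2}\,\hat f(\xi_0)\,e^{\mathrm{i}\phi(\xi_0)+\mathrm{i}\pi/4},
\]
which is bounded by $t^{-1/2}\,\||\xi|^{(2-\alpha)/2}\hat f(\xi)\|_{L^\infty_\xi}$, matching the first term on the right of \eqref{disper-equ1}; the Taylor remainder is estimated by Cauchy--Schwarz via the above bound on $\partial_\xi\hat f$ restricted to the window. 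On the complement, the lower bound $|\phi'(\xi)|\gtrsim t\,\xi_0^{\alpha-2}|\xi-\xi_0|$ allows one integration by parts, the boundary term at $|\xi-\xi_0|=\delta$ is of the same size as the main term and is absorbed, and the resulting bulk integral is again controlled by $\|\partial_\xi\hat f\|_{L^2}$ weighted as above.

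The principal obstacle will be the bookkeeping: verifying that all four error contributions (low, high, Taylor remainder on the resonant window, and non-resonant integration by parts) genuinely add up to the single rate $(1+t)^{-1/2-p_0}$ with the specific choices $\lambda=t^{-1-2p_0}$ and $\Lambda=t^{(1+2p_0)/3}$. This is where the hypothesis $\alpha\leq 1/(1+2p_0)$ enters, since at the right endpoint $\xi\sim\Lambda$ one needs the non-stationary lower bound $t\,\xi^{\alpha-1}\gtrsim 1$ to persist after accounting for the weight $\xi_0^{-1}$ in the $\partial_\xi\hat f$ estimate. Tracking the $\xi_0$-dependence uniformly in $x$ and preserving the sharp weight $|\xi|^{(2-\alpha)/2}$ in the main term, without letting any logarithmic loss or a stray power of $\xi_0$ spoil the $p_0$ gain, is the delicate part of the argument.
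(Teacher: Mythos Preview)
Your outline shares the paper's stationary-phase skeleton, but the paper's execution differs in one structural way that is not optional: it first performs a Littlewood--Paley decomposition $\hat f=\sum_k\varphi_k\hat f$ and treats each dyadic shell $|\xi|\sim 2^k$ separately, splitting according to whether $|x/t|\sim 2^{-k(1-\alpha)}$ (so that $\xi_0\sim 2^k$) or not. Within a fixed shell the stationary point, when present, always satisfies $\xi_0\sim\xi$, and this comparability is exactly what validates your lower bound $|\phi'(\xi)|\gtrsim t\,\xi_0^{\alpha-2}|\xi-\xi_0|$. Without that localization your bound \emph{fails} on the complement whenever $\xi\gg\xi_0$: the mean value theorem only gives $|\phi'(\xi)|=\alpha(1-\alpha)t\,\eta^{\alpha-2}|\xi-\xi_0|$ with $\eta$ between $\xi_0$ and $\xi$, and since $\alpha-2<0$ the honest lower bound is $t\,\xi^{\alpha-2}|\xi-\xi_0|$, not $t\,\xi_0^{\alpha-2}|\xi-\xi_0|$. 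In your scheme $\xi_0$ may sit near $\lambda$ while $\xi$ ranges up to $\Lambda$, so the discrepancy is a full power of $\Lambda/\lambda$. This is a genuine gap; the fix is precisely the dyadic decomposition in $|\xi|$ (and, in the paper, a further dyadic decomposition in $|\xi-\xi_0|$).

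Two smaller comparative remarks. First, your high cutoff is $\Lambda=t^{(1+2p_0)/3}$, whereas the paper takes $\Lambda\sim t$; with the paper's choice the hypothesis $\alpha>\tfrac13$ enters exactly through the stationary-case tail term $\Xi=t^{-1/4}2^{k(1/2-3\alpha/4)}$ evaluated at $2^k\sim t$, while the constraint $\alpha\le 1/(1+2p_0)$ enters at the \emph{low} end $2^k\sim\lambda$ of the nonstationary sum, not at the high end as you suggest. Your sketch never isolates the $\alpha>\tfrac13$ threshold, which is a sign that the complement estimate has not been carried out. Second, on the resonant window the paper is more economical than your Taylor expansion plus Gaussian integral: it simply bounds $\bigl|\int_{|\xi-\xi_0|\lesssim\delta}e^{i\phi}\widehat{P_kf}\bigr|$ by $\delta\,\|\widehat{P_kf}\|_{L^\infty}$, and since $\delta\sim t^{-1/2}2^{k(2-\alpha)/2}$ this already matches the first term of \eqref{disper-equ1}.
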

\begin{proof}
	Let $\varphi: \mathbb{R}\rightarrow [0,1]$ be an smooth function supported in $[-2,2]$ and $\varphi(x)\equiv 1$ for $x \in [-1,1]$. Denote
	\begin{equation*}
		\varphi_k(x):=\varphi(\frac{x}{2^k})-\varphi(\frac{x}{2^{k-1}}),\ k \in \mathbb{Z}, x \in \mathbb{R}.
	\end{equation*}
	In general, we define
	\begin{displaymath}
		\varphi_k^{(m)}(x) = \left\{ \begin{array}{ll}
			\varphi(\frac{x}{2^k})-\varphi(\frac{x}{2^{k-1}}) & \textrm{if $k\geq m+1$}\\
			\varphi(\frac{x}{2^k}) & \textrm{if $k=m$},
		\end{array} \right.
	\end{displaymath}
	where $m,k \in \mathbb{Z}, m\leq k$.
	To prove Lemma \ref{disperlem}, we need prove that the estimate 
	\begin{equation}\label{goal}
		||e^{\mathrm{i}t(-\Delta)^{\frac{\alpha}{2}}}f||_{L^\infty} \lesssim 1
	\end{equation}
hold under assumptions of
\begin{equation}\label{assumption1}
\big|\big||\xi|^{\frac{2-\alpha}{2}}\hat{f}(\xi)\big|\big|_{L^{\infty}_{\xi}} \lesssim (1+t)^{\frac{1}{2}}, 
\end{equation}
and
	\begin{equation}\label{assumption}
		||f||_{H^2}+ ||x \cdot \partial f||_{L^2} \lesssim (1+t)^{\frac{1}{2}+p_0}.
	\end{equation}
	Write
	\begin{align*}
		e^{\mathrm{i}t(-\Delta)^{\frac{\alpha}{2}}}f
		&= \mathcal{F}^{-1}(e^{\mathrm{i}t|\xi|^\alpha}\hat{f}(\xi))
		\\
		&=\int_{\mathbb{R}}e^{\mathrm{i}t|\xi|^\alpha}e^{\mathrm{i}x\xi}\hat{f}(\xi)d\xi
		\\
		&=\sum_{k\in \mathbb{Z}}\int_{\mathbb{R}}e^{\mathrm{i}t|\xi|^\alpha}e^{\mathrm{i}x\xi}\hat{f}(\xi)\varphi_k(\xi)d\xi.
	\end{align*}
	For low frequency part, it's not hard to get
	\begin{align*}
		&\sum_{2^k \leq 2^{10}(1+t)^{-(1+2p_0)}}\left|\int_{\mathbb{R}}e^{\mathrm{i}t|\xi|^\alpha}e^{\mathrm{i}x\xi}\hat{f}(\xi)\varphi_k(\xi)d\xi\right|
		\\
		=& \sum_{2^k \leq 2^{10}(1+t)^{-(1+2p_0)}}\left|\int_{\mathbb{R}}e^{\mathrm{i}t|\xi|^\alpha}e^{\mathrm{i}x\xi}\hat{P_k}f(\xi)d\xi\right|
		\\
		\lesssim & \sum_{2^k \leq 2^{10}(1+t)^{-(1+2p_0)}} 2^{\frac{k}{2}}
		||\hat{P_k}f(\xi)||_{L^2}
		\\
		\lesssim & (1+t)^{-(\frac{1}{2}+p_0)}\sum_{k\in \mathbb{Z}}||\hat{P_k}f(\xi)||_{L^2}
		\\
		\lesssim & 1,
	\end{align*}
	where $\hat{P_k}f(\xi)=\hat{f}(\xi)\varphi_k(\xi)$. The operator $P_k$ is also called the Littlewood-Paley operator, and we will use it frequently in the whole paper.

	For high frequency part, we have
	\begin{align*}
		&\sum_{2^k \geq 2^{-10}(1+t)}\big|\int_{\mathbb{R}}e^{\mathrm{i}t|\xi|^\alpha}e^{\mathrm{i}x\xi}\hat{f}(\xi)\varphi_k(\xi)d\xi\big|
		\\
		\lesssim & \sum_{2^k \geq 2^{-10}(1+t)} 2^{\frac{k}{2}}
		||\hat{P_k}f(\xi)||_{L^2}
		\\
		\lesssim & (1+t)^{-\frac{3}{2}}\sum_{k\in \mathbb{Z}}2^{2k}||\hat{P_k}f(\xi)||_{L^2}
		\\
		\lesssim & 1.
	\end{align*}
	Therefore, it remains to prove
	\begin{equation}\label{medfreq}
		\sum_{2^{10}(1+t)^{-(1+2p_0)}\leq 2^k \leq 2^{-10}(1+t)}\big|\int_{\mathbb{R}}e^{\mathrm{i}(t|\xi|^\alpha+x\xi)}\hat{f}(\xi)\varphi_k(\xi)d\xi\big|\lesssim 1.
	\end{equation}
	
	Let $\Phi(\xi)=t|\xi|^\alpha+x\xi$. Then we have
	\begin{equation*}
		\Phi'(\xi)=t(\frac{\alpha \xi}{|\xi|^{2-\alpha}}+\frac{x}{t}),
	\end{equation*}
	\begin{equation*}
		\Phi''(\xi)=\alpha(\alpha-1)t|\xi|^{-(2-\alpha)},
	\end{equation*}
	and
	\begin{equation*}
		\Phi'(\xi_0)=0 \Longrightarrow \xi_0=\alpha^{-\frac{1}{1-\alpha}}|\frac{t}{x}|^{\frac{1}{1-\alpha}}\sgn(\frac{t}{x}).
	\end{equation*}
	Assume that $t\geq 1$, from \eqref{assumption} we obtain
	\begin{equation*}
		||\hat{P_k}f(\xi)||_{L^2}+2^k||\partial\hat{P_k}f(\xi)||_{L^2}\lesssim t^{\frac{1}{2}+p_0}.
	\end{equation*}
	Therefore, if $|\frac{x}{t}|\leq 2^{-k(1-\alpha)-4}$ or $|\frac{x}{t}|\geq 2^{-k(1-\alpha)+4}$, we get 
	\begin{align*}
		&\sum_{k}\big| \int_{\mathbb{R}}e^{\mathrm{i}(t|\xi|^\alpha+x\xi)}\hat{P_k}f(\xi)d\xi \big|
		\\
		\lesssim &t^{-1}\sum_{k}2^{k(1-\alpha)}||\partial\hat{P_k}f(\xi)||_{L^1}+t^{-1}\sum_{k}2^{-k\alpha}||\hat{P_k}f(\xi)||_{L^1}
		\\
		:=&I+\varPi,
	\end{align*}
where $I=t^{-1}\sum_{k}2^{k(1-\alpha)}||\partial\hat{P_k}f(\xi)||_{L^1}, \varPi=t^{-1}\sum_{k}2^{-k\alpha}||\hat{P_k}f(\xi)||_{L^1}$.

In fact, we could estimate
	\begin{align*}
		I&=t^{-1}\sum_{k}2^{k(1-\alpha)}||\partial\hat{P_k}f(\xi)||_{L^1}
		\\
		&\lesssim t^{-1}\sum_{k\in \mathbb{Z}}2^{k(\frac{1}{2}-\alpha)}2^k||\partial\hat{P_k}f(\xi)||_{L^2},
	\end{align*}
	and
	\begin{align*}
		\varPi&=t^{-1}\sum_{k}2^{-k\alpha}||\hat{P_k}f(\xi)||_{L^1}
		\\
		&\lesssim t^{-1}\sum_{k\in \mathbb{Z}}2^{k(\frac{1}{2}-\alpha)}||\hat{P_k}f(\xi)||_{L^2}.
	\end{align*}
	\textit{Case 1}: $p_0 \leq \alpha \leq \frac{1}{2}$, we can get
	
	\begin{align*}
		I+\varPi&\lesssim t^{-1}t^{\frac{1}{2}-\alpha}\sum_{k\in \mathbb{Z}}(||\hat{P_k}f(\xi)||_{L^2}+2^k||\partial\hat{P_k}f(\xi)||_{L^2})
		\\
		&\lesssim t^{-\alpha+p_0}
		\\
		&\lesssim 1.
	\end{align*}
	\textit{Case 2}: $\frac{1}{2}<\alpha \leq \frac{1}{1+2p_0}$, we have
	\begin{align*}
		I+\varPi&\lesssim t^{-1}t^{(-\frac{1}{2}+\alpha)(1+2p_0)}\sum_{k\in \mathbb{Z}}(||\hat{P_k}f(\xi)||_{L^2}+2^k||\partial\hat{P_k}f(\xi)||_{L^2})
		\\
		&\lesssim t^{\alpha-1+2p_0\alpha}
		\\
		&\lesssim 1.
	\end{align*}
	Therefore, it suffices to prove
	\begin{equation}\label{left}
		\big|\int_{\mathbb{R}}e^{\mathrm{i}(t|\xi|^\alpha+x\xi)}\hat{f}(\xi)\varphi_k(\xi)d\xi\big|\lesssim 1
	\end{equation}
	provided that $t\geq 1$ and $2^k \in[2^{10}(1+t)^{-(1+2p_0)},2^{-10}(1+t)]$ $\bigcap  [2^{-\frac{4}{1-\alpha}}|\frac{t}{x}|^{\frac{1}{1-\alpha}}, 2^{\frac{4}{1-\alpha}}|\frac{t}{x}|^{\frac{1}{1-\alpha}}]$.
	In this situation, we notice that $|\xi_0|\approx 2^k$.

	Let $l_0$ denote the smallest integer with the property that $2^{2l_0}\geq t^{-1}2^{k(2-\alpha)}$ and we have 
	\begin{equation}\label{finalre}
		\big|\int_{\mathbb{R}}e^{\mathrm{i}(t|\xi|^\alpha+x\xi)}\hat{f}(\xi)\varphi_k(\xi)d\xi\big| \leq \sum_{l=l_0}^{k+100}|J_l(\xi)|,
	\end{equation}
	where
	\begin{equation*}
		J_l(\xi)=\int_{\mathbb{R}}e^{\mathrm{i}\Phi(\xi)}\hat{P_k}f(\xi) \varphi^{(l_0)}_l(\xi-\xi_0) d\xi.
	\end{equation*}
	Using \eqref{assumption}, we obtain
	\begin{equation*}
		||\hat{P_k}f(\xi)||_{L^\infty} \lesssim t^{\frac{1}{2}}2^{-k(1-\frac{\alpha}{2})}
	\end{equation*}
	and
	\begin{equation*}
		||\hat{P_k}f(\xi)||_{L^2}+2^k||\partial\hat{P_k}f(\xi)||_{L^2}\lesssim t^{\frac{1}{2}+p_0}.
	\end{equation*}
	Therefore,
	\begin{align*}
		J_{l_0}(\xi) & \lesssim 2^{l_0}||\hat{P_k}f(\xi)||_{L^\infty}
		\\ &\lesssim t^{-\frac{1}{2}}2^{k(1-\frac{\alpha}{2})}t^{\frac{1}{2}}2^{-k(1-\frac{\alpha}{2})}
		\\
		& \lesssim 1.
	\end{align*}
	Moreover, for $\Phi'(\xi)-\Phi'(\xi_0)=\Phi''(\eta)(\xi-\xi_0),  \eta \in (\xi_0,\xi)$, we get $|\Phi'(\xi)|\gtrsim |t|2^{-k(2-\alpha)}2^l$ whenever $|\xi|\approx 2^k$ and $|\xi-\xi_0|\approx 2^l$.

	For $l>l_0$, we integrate $J_l(\xi)$ by parts
	\begin{align*}
		J_l(\xi)&=\int_{\mathbb{R}}\frac{1}{\Phi'(\xi)}\hat{P_k}f(\xi) \varphi^{(l_0)}_l(\xi-\xi_0) de^{\mathrm{i}\Phi(\xi)}
		\\
		&=\int_{\mathbb{R}}\partial_{\xi}\frac{1}{\Phi'(\xi)}e^{\mathrm{i}\Phi(\xi)} \hat{P_k}f(\xi)\varphi^{(l_0)}_l(\xi-\xi_0) d\xi
		\\
		&\ \ \ \ +\int_{\mathbb{R}}\frac{1}{\Phi'(\xi)}e^{\mathrm{i}\Phi(\xi)}\partial_{\xi}\hat{P_k}f(\xi) \varphi^{(l_0)}_l(\xi-\xi_0) d\xi
		\\
		&\ \ \ \ +\int_{\mathbb{R}}\frac{1}{\Phi'(\xi)}e^{\mathrm{i}\Phi(\xi)} \hat{P_k}f(\xi)\partial_{\xi}\varphi^{(l_0)}_l(\xi-\xi_0) d\xi,
	\end{align*}
	and it follows that
	\begin{align*}
		|J_l(\xi)|&\lesssim \frac{1}{t2^{-k(2-\alpha)}2^l} \left(2^{-l}||\hat{P_k}f(\xi) 1_{[0,2^{l+4}]}(|\xi-\xi_0|)||_{L^1_{\xi}} +||\partial \hat{P_k}f(\xi) 1_{[0,2^{l+4}]}(|\xi-\xi_0|)||_{L^1_{\xi}} \right)
		\\
		& \lesssim t^{-1}2^{k(2-\alpha)}2^{-l}\left(||\hat{P_k}f(\xi)||_{L^\infty}+2^{\frac{l}{2}}||\partial\hat{P_k}f(\xi)||_{L^2}\right)
		\\
		& \lesssim t^{-\frac{1}{2}}2^{k(2-\alpha)-l-k(1-\frac{\alpha}{2})}+t^{-\frac{1}{2}}2^{k(1-\alpha)}2^{-\frac{l}{2}}.
	\end{align*}
	As a result, we have
	\begin{align*}
		\sum_{l=l_0+1}^{k+100}&\lesssim t^{-\frac{1}{2}}2^{-l_0}2^{k(1-\frac{\alpha}{2})}+
		t^{-\frac{1}{2}}2^{-\frac{l_0}{2}}2^{k(1-\alpha)}
		\\
		& \lesssim t^{-\frac{1}{2}}t^{\frac{1}{2}}2^{-k(1-\frac{\alpha}{2})}2^{k(1-\frac{\alpha}{2})}+
		t^{-\frac{1}{2}}2^{k(1-\alpha)}t^\frac{1}{4}2^{-\frac{k}{4}(2-\alpha)}
		\\
		&  \lesssim 1+t^{-\frac{1}{4}}2^{k(\frac{1}{2}-\frac{3}{4}\alpha)} := 1+ \Xi,
	\end{align*}
	where $\Xi:=t^{-\frac{1}{4}}2^{k(\frac{1}{2}-\frac{3}{4}\alpha)}$.
	Recalling $2^{10}(1+t)^{-(1+2p_0)} \leq 2^k \leq 2^{-10}(1+t)$, for $\frac{1}{3} < \alpha \leq \frac{1}{1+2p_0}$ we have 
	\begin{align*}
		\Xi&=t^{-\frac{1}{4}}2^{k(\frac{1}{2}-\frac{3}{4}\alpha)}
		\\
		& \lesssim \max \left\{ t^{\frac{1}{4}-\frac{3}{4}\alpha}, t^{-\frac{3}{4}(1-\alpha)+2p_0(\frac{3}{4}\alpha-\frac{1}{2})} \right\}
		\\
		& \lesssim 1.
	\end{align*}
\end{proof}
\section{Preliminaries}
In this section, we will introduce some basic estimates and commutator.

Let $P_j$ is the Littlewood-Paley operator and $\hat{f_j}=\hat{P_j} f$, $j \in \mathbb{Z}$. We denote
\begin{equation*}
I(\xi,t):=\int_{\mathbb{R}^2}e^{it\Psi(\xi,\eta,\sigma)}\hat{f}{}(\xi-\eta)\hat{f}(\eta-\sigma)\hat{\bar{f}}(\sigma)d\eta d\sigma,
\end{equation*}
where
\begin{equation*}
\Psi(\xi,\eta,\sigma):=|\xi|^\alpha-|\xi-\eta|^\alpha-|\eta-\sigma|^\alpha+|\sigma|^\alpha.
\end{equation*}
Using Littlewood-Paley decomposition, we have
$I(\xi,t)=\sum_{k_1,k_2,k_3 \in \mathbb{Z}}I_{k_1,k_2,k_3}(\xi,t)$.
Here
\begin{equation}\label{I_k}
I_{k_1,k_2,k_3}(\xi,t)=\int_{\mathbb{R}^2}e^{it\Psi(\xi,\eta,\sigma)}\hat{f}_{k_1}(\xi-\eta)\hat{f}_{k_2}(\eta-\sigma)\hat{\bar{f}}_{k_3}(\sigma)d\eta d\sigma.
\end{equation}
Without loss of generality, we assume $k_1 \leq k_2 \leq k_3$.

We also introduce the following estimates based on assumption \eqref{a1}, which will be used throughout our paper. In view of assumption \eqref{a1}, we could deduce that
\begin{equation}\label{a11}
||\hat{f}_l(s)||_{L^2}\lesssim \varepsilon_12^{p_0m}2^{-N_0l^+},
\end{equation}
\begin{equation}\label{a12}
||\partial \hat{f}_l(s)||_{L^2}\lesssim \varepsilon_12^{p_0m}2^{-l},
\end{equation}
\begin{equation}\label{a13}
||\hat{f}_l(s)||_{L^\infty}\lesssim \varepsilon_12^{-10 l^+} ,
\end{equation}
for any $l \in \mathbb{Z}$ and $s \in [2^m-2,2^{m+1}]  \bigcap[0,T]$.  
	\begin{Lemma}\label{m}
		\cite{IP}. Assume that $m \in L^1(\mathbb{R}^2)$ satisfies
		\begin{equation*}
			||\int_{\mathbb{R}^2}m(\eta,\sigma)e^{\textit{i}xy}e^{\textit{i}y\sigma}|| \leq A,
		\end{equation*}
		for some $A \in (0,\infty)$. Then for any $(p,q,r) \in \left\{(2,2,\infty), (2,\infty),(\infty,2,2) \right\}$, we have
		\begin{equation}
			\left| \int_{\mathbb{R}^2} \hat{f}(\eta)\hat{g}(\sigma)\hat{v}(-\eta-\sigma)m(\eta,\sigma)d\eta d\sigma \right| \lesssim A||f||_{L^p}||g||_{L^p}||v||_{L^p}.
		\end{equation}
	\end{Lemma}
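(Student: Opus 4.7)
The statement to prove (Lemma 4.1) is the classical multilinear multiplier bound: provided the Fourier/convolution kernel of $m(\eta,\sigma)$ has $L^1$ norm at most $A$, the trilinear form $\int \hat f(\eta)\hat g(\sigma)\hat v(-\eta-\sigma)m(\eta,\sigma)\,d\eta\,d\sigma$ is controlled by $A$ times the product of Lebesgue norms on $f,g,v$ with exponents from the symmetric triples $(2,2,\infty)$, $(2,\infty,2)$, $(\infty,2,2)$. The plan is to turn the frequency-side integral into a kernel estimate on the physical side, and then apply Hölder's inequality three times, one for each exponent triple.

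First I would define the kernel
\begin{equation*}
K(x,y):=\int_{\mathbb{R}^2} m(\eta,\sigma)\,e^{\mathrm{i}x\eta}\,e^{\mathrm{i}y\sigma}\,d\eta\,d\sigma,
\end{equation*}
so that the hypothesis reads $\|K\|_{L^1(\mathbb{R}^2)}\le A$. Next, using Fourier inversion to express $\hat f(\eta)$, $\hat g(\sigma)$, and $\hat v(-\eta-\sigma)$ as integrals of $f,g,v$ and interchanging the order of integration, I would rewrite the trilinear form as
\begin{equation*}
\int_{\mathbb{R}^2}\hat f(\eta)\hat g(\sigma)\hat v(-\eta-\sigma)m(\eta,\sigma)\,d\eta\,d\sigma
=\int_{\mathbb{R}^3} f(x_1)g(x_2)v(x_3)\,K(x_3-x_1,x_3-x_2)\,dx_1\,dx_2\,dx_3.
\end{equation*}
This identity is the bridge: it moves the multiplier hypothesis on $m$ (Fourier side) into an $L^1$ statement on a kernel acting in physical space, where Hölder-type inequalities apply cleanly.

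Then, for each of the three exponent triples, I would perform the change of variables $y_1=x_3-x_1$, $y_2=x_3-x_2$ and apply Hölder/Cauchy--Schwarz on the inner integral in $x_3$ after pulling $|K(y_1,y_2)|\,dy_1\,dy_2$ outside. Explicitly: in the case $(2,2,\infty)$ I would estimate $|v(x_3)|\le\|v\|_{L^\infty}$ and apply Cauchy--Schwarz in $x_3$ to the translated $f$ and $g$; in $(2,\infty,2)$ I would bound $g$ in $L^\infty$ and use Cauchy--Schwarz on the $f,v$ pair; and in $(\infty,2,2)$ I would bound $f$ in $L^\infty$ and use Cauchy--Schwarz on $g,v$. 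In every case the inner integral is bounded by the product of the three indicated norms, so pulling the $\|K\|_{L^1}$ factor through gives the desired $A\,\|f\|_{L^p}\|g\|_{L^q}\|v\|_{L^r}$ bound.

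There is no genuine obstacle; the only subtlety is bookkeeping the translation invariance so that the Cauchy--Schwarz step is dimensionally correct (the change of variables absorbs the shifts). The argument is purely structural and uses neither the specific form of the phase $\Psi$ nor any information about $\alpha$, which is exactly why this lemma will later be applied as a black box to bound the trilinear oscillatory integrals $I_{k_1,k_2,k_3}(\xi,t)$ defined in \eqref{I_k} by freezing one of the $\hat f_{k_i}$ factors in $L^\infty$ (using \eqref{a13}) and placing the other two in $L^2$ (using \eqref{a11}).
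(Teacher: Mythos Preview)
Your proof is correct and is precisely the standard argument: pass to physical space via the kernel $K=\mathcal{F}^{-1}m$, rewrite the trilinear form as $\int f(x_1)g(x_2)v(x_3)K(x_3-x_1,x_3-x_2)\,dx$, and apply Cauchy--Schwarz/H\"older after the change of variables. The paper itself gives no proof of this lemma---it simply cites \cite{IP}---so there is nothing further to compare; your argument is exactly the one found there.
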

	\begin{Lemma}\label{partialm}
	 Let $f(t)=e^{\mathrm{i}t(-\Delta)^{\frac{\alpha}{2}}}u(t)$ and $u$ is the unique local solution of \eqref{NFSE}. Under the assumption of Proposition \ref{bootstrap1}, for any $l \in \mathbb{Z}$ and $s \in [2^{m-1},2^{m+1}] \bigcap [0,T]$, we have
		\begin{equation}
			||\partial_s \hat{f}_l(s)||_{L^2} \lesssim \varepsilon_1 2^{3p_0m}2^{-20l^+}2^{-m},
		\end{equation}
		and
		\begin{equation}
			||\partial_s \hat{f}_l(s)||_{L^\infty} \lesssim \varepsilon_1 2^{3p_0m}2^{-20l^+}2^{-\frac{m}{2}}(2^{\frac{l}{2}}+2^{-\frac{m}{2}}).
		\end{equation}
	\end{Lemma}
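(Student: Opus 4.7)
The starting point is the \emph{profile equation}. Differentiating $f=e^{\mathrm{i}s(-\Delta)^{\alpha/2}}u$ in $s$ and substituting \eqref{NFSE} yields $\partial_s f=-\mathrm{i}c_*\,e^{\mathrm{i}s(-\Delta)^{\alpha/2}}(|u|^2u)$; taking the Fourier transform and using the identities $\hat u(\zeta)=e^{-\mathrm{i}s|\zeta|^\alpha}\hat f(\zeta)$ and $\hat{\bar u}(\zeta)=e^{\mathrm{i}s|\zeta|^\alpha}\hat{\bar f}(\zeta)$ reproduces exactly the oscillatory integral $I(\xi,s)$ in \eqref{I_k}, so that $\partial_s\hat f(\xi,s)=-\mathrm{i}c_*I(\xi,s)$ and hence $\partial_s\hat f_l(s,\xi)=-\mathrm{i}c_*\varphi_l(\xi)I(\xi,s)$. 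Both required inequalities therefore reduce to pointwise/integral bounds on $\varphi_l I$ in $L^2_\xi$ and $L^\infty_\xi$.

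For the $L^2$ bound, I use Plancherel and the $L^2$-unitarity of $e^{\mathrm{i}s(-\Delta)^{\alpha/2}}$ to reduce to
\[
\|\partial_s\hat f_l\|_{L^2_\xi}=\|\partial_sf_l\|_{L^2_x}\lesssim\|P_l(|u|^2u)\|_{L^2_x}.
\]
I expand $|u|^2u=\sum_{k_1\le k_2\le k_3}u_{k_1}u_{k_2}\bar u_{k_3}$ via Littlewood--Paley; frequency support forces $k_3\gtrsim l-O(1)$ for $P_l$ to contribute. H\"older in the form $L^\infty\cdot L^\infty\cdot L^2$ places $u_{k_3}$ in $L^2$, bounded by \eqref{a11} (where the exponent $N_0=100$ easily dominates the target loss $2^{-20l^+}$), and each $L^\infty$ factor is handled by Lemma \ref{disperlem} together with the $Z$-norm and weighted/Sobolev bounds from \eqref{a1} (and \eqref{a13}). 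Each dispersive application supplies a factor $(1+s)^{-1/2}\sim 2^{-m/2}$, so their product yields the announced $2^{-m}$ decay; the $p_0$-losses from the remainder in Lemma \ref{disperlem} fit comfortably inside the prescribed $2^{3p_0m}$, and the $(k_1,k_2,k_3)$-sum converges using \eqref{a13}.

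For the $L^\infty_\xi$ bound, unitarity is no longer available, so I estimate $I(\xi,s)$ directly for $|\xi|\sim 2^l$. Decomposing $I=\sum I_{k_1,k_2,k_3}$, each piece is controlled by Lemma \ref{m} applied with the oscillatory multiplier $m(\eta,\sigma)=e^{\mathrm{i}s\Psi(\xi,\eta,\sigma)}\varphi_{k_1}(\xi-\eta)\varphi_{k_2}(\eta-\sigma)\varphi_{k_3}(\sigma)$, or equivalently by Cauchy--Schwarz in $(\eta,\sigma)$ combined with a single application of Lemma \ref{disperlem} to the $L^\infty$ norm of one of the $u_{k_i}$. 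The two terms of the target bound reflect two complementary strategies: the $2^{l/2-m/2}$ piece comes from extracting a single dispersive $L^\infty$-factor and placing the other two factors in $L^2$, the $2^{l/2}$ being dictated by the output localisation $|\xi|\sim 2^l$ and the measure of the effective integration region; the $2^{-m}$ piece arises in configurations where the sizes $(k_1,k_2,k_3)$ permit extracting two dispersive factors simultaneously (typically $k_1,k_2\ll l$). In every case the $Z$-bound \eqref{a13} and the $L^2$-bound \eqref{a11} close the sum over $(k_1,k_2,k_3)$, with $N_0=100$ absorbing $2^{-20l^+}$.

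The main obstacle is the $L^\infty_\xi$ bound: without Plancherel the trilinear oscillatory integral $I$ must be dissected by hand, balancing dispersive time decay against the output and input frequency localisations. The most delicate configurations are those in which the two largest input frequencies are comparable and much larger than $2^l$ — precisely the space-time resonance regime revisited in Section 5 — but at the level of this lemma the weaker bound $2^{l/2-m/2}$ (in place of $2^{-m}$) is acceptable, which explains the asymmetric two-term form of the stated inequality.
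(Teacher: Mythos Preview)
Your $L^2_\xi$ argument is essentially the paper's: pass to physical space, put the highest frequency in $L^2$ via \eqref{a11}, and the other two in $L^\infty$ via the dispersive estimate \eqref{441}. That part is fine.

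The gap is in the $L^\infty_\xi$ bound, specifically in the configuration the paper isolates as its Case~3: $l<0$ and all three input frequencies in the medium range
\[
\max\{l,-m\}+10\le k_1\le k_2\le k_3\le \tfrac{m}{10},\qquad |k_2-k_3|\le 4.
\]
Here your Strategy~1 (one dispersive $L^\infty$, two $L^2$) yields only
\[
|I_{k_1,k_2,k_3}(\xi,s)|\lesssim \|u_{k_1}\|_{L^2}\|u_{k_2}\|_{L^2}\|u_{k_3}\|_{L^\infty}\lesssim \varepsilon_1^3\,2^{2p_0m-\frac{m}{2}},
\]
with no factor of $2^{l/2}$. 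Your claim that the output localisation $|\xi|\sim 2^l$ supplies $2^{l/2}$ through ``the measure of the effective integration region'' does not hold in this regime: since every input sits at frequency $\gtrsim 2^{k_1}\gg 2^l$, the integration region in $(\eta,\sigma)$ has size $\sim 2^{k_1+k_3}$, not $2^l$, and no Bernstein-type gain on the output is available (the inequality $\| \cdot \|_{L^\infty_\xi} \le |\mathrm{supp}|^{1/2}\| \cdot \|_{L^2_\xi}$ goes the wrong way). Your Strategy~2 (two dispersive factors) also fails here because none of the inputs is at low frequency, and you have no usable $L^1_x$ bound to pair with two $L^\infty$'s. Summing over the $O(m^3)$ triples in this range, you obtain at best $m^3\varepsilon_1^3 2^{2p_0m-m/2}$, which is far from the required $\varepsilon_1^3 2^{3p_0m-m}$ when, say, $l\le -m$.

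What the paper actually does in this case is an integration by parts in $\sigma$, using the non-stationary phase bound
\[
|\partial_\sigma\Psi(\xi,\eta,\sigma)|\gtrsim 2^{(\alpha-1)k_2}
\]
on the support of the integrand (after inserting a cutoff $\chi(\eta/\sigma)$ to separate the two stationary branches). This gains a factor $s^{-1}2^{(1-\alpha)k_2}$ at the cost of derivatives $\partial\hat f_{k_2}$, $\partial\hat{\bar f}_{k_3}$, which are then controlled by \eqref{a12}; the resulting bound is $\lesssim\varepsilon_1^3 2^{2p_0m-m}$, uniformly in $l$, as required. Your proposal never invokes this mechanism, and without it the $L^\infty_\xi$ estimate cannot close in Case~3.
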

\begin{proof}
	Firstly, we have $\partial_t \hat{f}(\xi,t)=-i(2\pi)^{-2}c_0I(\xi,t)$. We just need prove
\begin{equation}\label{e1}
|| \varphi_l(\xi)I(\xi,t)||_{L^2_\xi} \lesssim \varepsilon_1 2^{3p_0m}2^{-20l^+}2^{-m},
\end{equation}
and
\begin{equation}\label{e2}
|| \varphi_l(\xi)I(\xi,t)||_{L^\infty_\xi} \lesssim \varepsilon_1 2^{3p_0m}2^{-20l^+}2^{-\frac{m}{2}}\left(2^{\frac{l}{2}}+2^{-\frac{m}{2}}\right).
\end{equation}
Using \eqref{a11}, for $n \in \mathbb{Z}$, we have
\begin{equation}\label{440}
	|| e^{is (-\Delta)^{\frac{\alpha}{2}}} f_n(s) ||_{L^2_x} \lesssim \varepsilon_1 2^{p_0m}2^{-N_0n^+}2^{\frac{n}{2}}.
\end{equation}
Using dispersive estimate in Lemma \ref{disperlem}, we get
\begin{equation*}
	|| e^{is (-\Delta)^{\frac{\alpha}{2}}} f_n(\cdot,s) ||_{L^\infty} \lesssim \varepsilon_1 2^{-\frac{m}{2}}.
\end{equation*}
On the other hand, we have
\begin{align*}
	|| e^{is (-\Delta)^{\frac{\alpha}{2}}} f_n(\cdot,s) ||_{L^\infty} &\lesssim 
	|| \mathcal{F}^{-1}\left( e^{is (-\Delta)^{\frac{\alpha}{2}}} f_n(\cdot,s) \right) ||_{L^1_\xi}
	\\
	& \lesssim \varepsilon_12^{-10l^+} 2^{n}.
\end{align*}
Therefore
\begin{align}\label{441}
	|| e^{is (-\Delta)^{\frac{\alpha}{2}}} f_n(\cdot,s) ||_{L^\infty} 
	& \lesssim \varepsilon_1 \min\{2^{-\frac{m}{2}}, 2^n 2^{-10n^+}  \}.
\end{align}
To achieve our goal, we use phase decomposition
\begin{align*}
||\varphi_l(\xi)I(\xi,t)||_{L^2_\xi}&=||\varphi_l(\xi)\sum_{k_1,k_2,k_3 \in \mathbb{Z}}I_{k_1,k_2,k_3}(\xi,t)||_{L^2_\xi}
\\
& \lesssim \sum_{k_3 \geqslant l-10,k_2,k_1} \left| \int_{\mathbb{R}^2} e^{is\Psi(\xi,\eta,\sigma)}  \hat{f}_{k_1}(\xi-\eta,s)\hat{f}_{k_2}(\eta-\sigma,s)\hat{\bar{f}}_{k_3}(\eta,s) d\eta d\sigma\right|_{L^2_\xi}
\end{align*}
By passing to the physical space and estimating the highest frequency component in $L^2$ and the other two components in $L^\infty$, we get
\begin{align*}
&\sum_{k_3 \geqslant l-10,k_2,k_1} \left| \int_{\mathbb{R}^2} e^{is\Psi(\xi,\eta,\sigma)}  \hat{f}_{k_1}(\xi-\eta,s)\hat{f}_{k_2}(\eta-\sigma,s)\hat{\bar{f}}_{k_3}(\eta,s) d\eta d\sigma\right|_{L^2_\xi}
\\
& \lesssim \sum_{k_3 \geqslant l-10,k_2,k_1} \varepsilon_1 2^{p_0m}2^{-N_0k_3^+}2^{\frac{k_3}{2}} \cdot \varepsilon_1 \sqrt{2^{-\frac{m}{2}}2^{k_1}2^{-10k_1^+}}\cdot \varepsilon_1 \sqrt{2^{-\frac{m}{2}}2^{k_2}2^{-10k_2^+}}.
\\
& \lesssim \varepsilon_1 2^{-m}2^{3p_0m}2^{-20l^+}.
\end{align*}
Thus, we could conclude the proof of \eqref{e1}.

As for estimate \eqref{e2}, we will divide it into several cases.

\textit{Case 1}: For $l \geqslant 0$, by passing to the physical space and estimating the two highest frequency components in $L^2$ and the lowest frequency component in $L^\infty$, we get
\begin{align*}
&\sum_{k_3 \geqslant l-6,k_2,k_1} \left|\varphi_l(\xi) \int_{\mathbb{R}^2} e^{is\Psi(\xi,\eta,\sigma)}  \hat{f}_{k_1}(\xi-\eta,s)\hat{f}_{k_2}(\eta-\sigma,s)\hat{\bar{f}}_{k_3}(\eta,s) d\eta d\sigma\right|_{L^\infty_\xi}\\
\\
& \lesssim \sum_{k_3 \geqslant l-10,k_2,k_1} \varepsilon_1 2^{p_0m}2^{-N_0k_3^+}2^{\frac{k_3}{2}} \cdot \varepsilon_1 2^{p_0m}2^{-N_0k_2^+}2^{\frac{k_2}{2}} \cdot \varepsilon_12^{-\frac{m}{2}}
\\
& \lesssim \varepsilon_1 2^{-\frac{m}{2}}2^{3p_0m}2^{-10l^+}(2^{\frac{l}{2}}+2^{-\frac{m}{2}}).
\end{align*}
\quad \quad \textit{Case 2}: For $l < 0$. if $k_1 \leqslant \max\{ l,-m\}+10$ or $k_3 \geqslant \frac{m}{10}$, it's similar with \textit{Case 1}.

\textit{Case 3}: For $l < 0$ and $\max\{ l,-m\}+10 \leqslant k_1 ,k_2,k_3 \leqslant \frac{m}{10}$, therefore $|k_2-k_3| \leqslant 4$. Let $\chi$ be a smooth, cut-off function $\chi : \mathbb{R}\rightarrow [0,1]$ supported in $[-\frac{11}{10}, \frac{11}{10}]$ and equal to $1$ in $[-\frac{9}{10}, \frac{9}{10}]$. We define
\begin{equation*}
J_1=\int_{\mathbb{R}^2}\chi(\frac{\eta}{\sigma})e^{is\Psi(\xi,\eta,\sigma)}\hat{f}_{k_1}(\xi-\eta,s)\hat{f}_{k_2}(\eta-\sigma,s)\hat{\bar{f}}_{k_3}(\eta,s)d\eta d\sigma,
\end{equation*}
\begin{equation*}
J_1=\int_{\mathbb{R}^2} \left(1-\chi(\frac{\eta}{\sigma}) \right)e^{is\Psi(\xi,\eta,\sigma)}\hat{f}_{k_1}(\xi-\eta,s)\hat{f}_{k_2}(\eta-\sigma,s)\hat{\bar{f}}_{k_3}(\eta,s)d\eta d\sigma.
\end{equation*}
To estimate $J_1$ we integrate by parts in $\sigma$. Recall the definition of $\Psi(\xi,\eta,\sigma)$, which shows that
\begin{equation*}
\left|\partial_\sigma \Psi(\xi,\eta,\sigma) \right| \lesssim 2^{(\alpha-1) k_2}. 
\end{equation*}
Integrating $J_1$ by parts in $\sigma$, we get
\begin{align*}
	|J_1| \lesssim J_{11}+J_{12}+J_{13},
\end{align*}
where
$$J_{11}:=\int_{\mathbb{R}^2} 
\frac{1}{2^m 2^{(\alpha-1) k_2}} \left| \hat{f}_{k_1}(\xi-\eta,s)\right| 2^{-k_2} \left| \hat{f}_{k_2}(\eta-\sigma,s) \hat{\bar{f}}_{k_3}(\sigma,s)\right|d\eta d\sigma,$$
$$J_{12}:=\int_{\mathbb{R}^2} 
\frac{1}{2^m 2^{(\alpha-1) k_2}} \left| \hat{f}_{k_1}(\xi-\eta,s)\right| \left| \partial\hat{f}_{k_2}(\eta-\sigma,s)\right| \left| \hat{\bar{f}}_{k_3}(\sigma,s)\right|d\eta d\sigma,$$
$$J_{13}:=\int_{\mathbb{R}^2} 
\frac{1}{2^m 2^{(\alpha-1) k_2}} \left| \hat{f}_{k_1}(\xi-\eta,s)\right|  \left| \hat{f}_{k_2}(\eta-\sigma,s)\right| \left| \partial \hat{\bar{f}}_{k_3}(\sigma,s)\right|d\eta d\sigma.$$
Using \eqref{a11},\eqref{a12},\eqref{a13}, we deduce that
\begin{align*}
J_{11} &\lesssim 2^{-m}2^{(1-\alpha)k_2}2^{-k_2} \varepsilon_1^3 \min \big\{2^{-10(k_1^+ + k_2^+ + k_3^+)}2^{k_1+k_2+k_3} \cdot 2^{k_2+k_3},
 \\
& \qquad \qquad \qquad \qquad \qquad \qquad \quad 2^{-10k_1^+}2^{k_1} \cdot2^{p_0m}2^{-N_0(k_2^++k_3^+)}2^{\frac{k_2+k_3}{2}} \big\}\\
&\lesssim \varepsilon_12^{-m}2^{2p_0m}
\\
&\lesssim \varepsilon_12^{-m}2^{2p_0m}2^{-20l^+}\left( 2^{\frac{l}{2}}+2^{-\frac{m}{2}}\right),
\end{align*}
\begin{align*}
J_{12} &\lesssim 2^{-m}2^{(1-\alpha)k_2} \varepsilon_1^3\min \big\{2^{-10(k_1^+  + k_3^+)}2^{k_1+k_3}\cdot 2^{p_0m}2^{-k_2} \cdot 2^{k_2+k_3}, 
\\
&\qquad \qquad \qquad \qquad \qquad \quad 2^{p_0m}2^{-N_0(k_1^++k_3^+)}2^{\frac{k_1+k_3}{2}}\cdot 2^{p_0m}2^{-k_2} \big\}\\
&\lesssim \varepsilon_12^{-m}2^{2p_0m}
\\
&\lesssim \varepsilon_12^{-m}2^{2p_0m}2^{-20l^+}\left( 2^{\frac{l}{2}}+2^{-\frac{m}{2}}\right),
\end{align*}
\begin{align*}
J_{12} &\lesssim 2^{-m}2^{(1-\alpha)k_2} \varepsilon_1^3\min \big\{2^{-10(k_1^+  + k_2^+)}2^{k_1+k_3}\cdot 2^{p_0m}2^{-k_3} \cdot 2^{k_2+k_3},
\\
&\qquad \qquad \qquad \qquad \qquad \quad  2^{p_0m}2^{-N_0(k_1^++k_2^+)}2^{\frac{k_1+k_2}{2}}\cdot 2^{p_0m}2^{-k_3} \big\}\\
&\lesssim \varepsilon_12^{-m}2^{2p_0m}
\\
&\lesssim \varepsilon_12^{-m}2^{2p_0m}2^{-20l^+}\left( 2^{\frac{l}{2}} + 2^{-\frac{m}{2}}\right),
\end{align*}
Thus, we finish the proof of this Lemma.
\end{proof}
\begin{Lemma}
	Let $0<\alpha \leq 2$ and $S=\alpha t \partial_t+x\partial_x$ be the scaling vector field and we have
	\begin{equation*}
		[S, \mathrm{i} \partial_t-(-\Delta)^{\frac{\alpha}{2}}]=-\alpha \mathrm{i}(\mathrm{i} \partial_t-(-\Delta)^{\frac{\alpha}{2}}).
	\end{equation*}
	Here $[A,B]:=AB-BA$.
\end{Lemma}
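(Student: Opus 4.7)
The plan is a direct operator-level commutator computation. Since the bracket is linear in its second slot, I would split
\begin{equation*}
[S,\mathrm{i}\partial_t-(-\Delta)^{\alpha/2}]=[S,\mathrm{i}\partial_t]-[S,(-\Delta)^{\alpha/2}],
\end{equation*}
and then exploit $S=\alpha t\partial_t+x\partial_x$ to reduce each piece further, since $x\partial_x$ commutes with $\partial_t$ and $\alpha t\partial_t$ commutes with the time-independent multiplier $(-\Delta)^{\alpha/2}$.

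For the first piece, only $[\alpha t\partial_t,\mathrm{i}\partial_t]=\mathrm{i}\alpha[t\partial_t,\partial_t]$ survives, and a one-line product-rule calculation gives $[t\partial_t,\partial_t]=-\partial_t$, hence $[S,\mathrm{i}\partial_t]=-\mathrm{i}\alpha\partial_t$. For the second piece, the problem reduces to $[x\partial_x,(-\Delta)^{\alpha/2}]$, which I would evaluate on the Fourier side: the identities $\widehat{\partial_x f}=\mathrm{i}\xi\widehat f$ and $\widehat{xg}=\mathrm{i}\partial_\xi\widehat g$ combine to $\widehat{x\partial_x f}(\xi)=-(1+\xi\partial_\xi)\widehat f(\xi)$, while $(-\Delta)^{\alpha/2}$ acts as multiplication by $|\xi|^{\alpha}$. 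Composing in both orders and invoking Euler's identity $\xi\partial_\xi|\xi|^{\alpha}=\alpha|\xi|^{\alpha}$, the $-\widehat f$ and $-\xi|\xi|^{\alpha}\partial_\xi\widehat f$ contributions cancel and I am left with
\begin{equation*}
[x\partial_x,(-\Delta)^{\alpha/2}]=-\alpha(-\Delta)^{\alpha/2}.
\end{equation*}

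Summing the two pieces gives $[S,\mathrm{i}\partial_t-(-\Delta)^{\alpha/2}]=-\mathrm{i}\alpha\partial_t+\alpha(-\Delta)^{\alpha/2}$, which is precisely a scalar multiple of $\mathrm{i}\partial_t-(-\Delta)^{\alpha/2}$, so the claimed identity follows by collecting the scalar prefactor. The only step with any real content is the Fourier-side homogeneity identity $[x\partial_x,(-\Delta)^{\alpha/2}]=-\alpha(-\Delta)^{\alpha/2}$, which encodes nothing more than the fact that the symbol $|\xi|^{\alpha}$ is homogeneous of degree $\alpha$ under Euler's vector field on frequency space; the remaining manipulations are routine bookkeeping with the product rule, and the overall result reflects the familiar principle that $S$ is the infinitesimal generator of the scaling symmetry of the free fractional Schr\"odinger equation.
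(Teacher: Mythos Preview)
Your approach is essentially identical to the paper's: split the commutator by linearity, handle $[\alpha t\partial_t,\mathrm{i}\partial_t]$ by the product rule, and compute $[x\partial_x,(-\Delta)^{\alpha/2}]$ on the Fourier side via the homogeneity of $|\xi|^\alpha$. Your Fourier computation is in fact the more careful one---the paper silently uses $\widehat{x\partial_x g}=\partial_\xi(\xi\hat g)$, dropping a sign, and arrives at $+\alpha|\xi|^\alpha\hat g$ for the commutator symbol rather than your (correct) $-\alpha|\xi|^\alpha\hat g$.

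One point to flag: the scalar you actually obtain is $-\alpha$, not the $-\alpha\mathrm{i}$ appearing in the stated identity. Your final line reads $[S,\mathrm{i}\partial_t-(-\Delta)^{\alpha/2}]=-\alpha(\mathrm{i}\partial_t-(-\Delta)^{\alpha/2})$, which is the correct formula (and is what the scaling symmetry $u(t,x)\mapsto u(\lambda^\alpha t,\lambda x)$ predicts); the extra factor of $\mathrm{i}$ in the lemma as written is a typo in the paper. This has no effect on the downstream energy estimates, where only $\|[S,L]u\|_{L^2}\lesssim\|Lu\|_{L^2}$ is used.
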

\begin{proof}
	It is easy to check
	\begin{equation*}
		[S,\mathrm{i} \partial_t]=-\alpha \mathrm{i} \partial_t,
	\end{equation*}
	and for any smooth $g$ with compact support,
	\begin{align*}
		([x\partial_x, (-\Delta)^{\frac{\alpha}{2}}]g)^{\hat{}}
		&=(x\partial_x(-\Delta)^{\frac{\alpha}{2}}g)^{\hat{}}-((-\Delta)^{\frac{\alpha}{2}}x\partial_xg)^{\hat{}}
		\\
		&=\partial_\xi(\xi|\xi|^\alpha \hat{g})-|\xi|^\alpha \partial_\xi(\xi \hat{g})
		\\
		&=\big(\partial_\xi(\xi|\xi|^\alpha )-|\xi|^\alpha \big)\hat{g}
		\\
		&=\alpha |\xi|^\alpha \hat{g}.
	\end{align*}
	As a result, we get $[S, \mathrm{i} \partial_t-(-\Delta)^{\frac{\alpha}{2}}]=-\alpha \mathrm{i}(\mathrm{i} \partial_t-(-\Delta)^{\frac{\alpha}{2}})$.	
\end{proof}
\section{Proof of Propositon 2.1}
In this section, we will give the proof of Propositon 2.1 combining basic energy estimates and space-time resonance. 
\begin{Lemma}\label{yuxiang}
	For any $t \in [0,T]$,
	we have
	\begin{equation}\label{40}
		||S(|u|^2u)(t)||_{L^{2}(\mathbb{R})}\leq 4\varepsilon_0(1+t)^{-1+p_0},
	\end{equation}
	where $S=\alpha t \partial_t+x\partial_x$.
\end{Lemma}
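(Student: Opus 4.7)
The plan is to reduce the estimate to controlling $\|u\|_{L^\infty}$ and $\|Su\|_{L^2}$ separately. Because $S=\alpha t\partial_t+x\partial_x$ has real coefficients, $\overline{Su}=S\bar u$ and $S$ is a derivation; expanding
\[
S(|u|^2 u) = S(u\bar u u) = 2|u|^2\,Su + u^2\,\overline{Su}
\]
gives the pointwise bound $|S(|u|^2 u)|\le 3|u|^2|Su|$ and hence $\|S(|u|^2 u)\|_{L^2}\le 3\|u\|_{L^\infty}^2\,\|Su\|_{L^2}$.

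The main technical step is to identify $Su$ in terms of the profile $f(t)=e^{\mathrm{i}t(-\Delta)^{\alpha/2}}u(t)$. Using the equation to rewrite $\alpha t\partial_t u = -\mathrm{i}\alpha t(-\Delta)^{\alpha/2}u-\mathrm{i}\alpha tc_*|u|^2 u$, and computing on the Fourier side with $\hat u(\xi,t)=e^{-\mathrm{i}t|\xi|^\alpha}\hat f(\xi,t)$ and $\widehat{x\partial_x u}(\xi)=-\hat u-\xi\partial_\xi\hat u$, the identity $\partial_\xi|\xi|^\alpha=\alpha|\xi|^{\alpha-2}\xi$ produces a cancellation of the two $\mathrm{i}\alpha t|\xi|^\alpha\hat u$ contributions and yields
\[
Su = e^{-\mathrm{i}t(-\Delta)^{\alpha/2}}(x\partial_x f) - \mathrm{i}\alpha t c_*|u|^2 u.
\]
This is essentially the time-integrated form of the commutator identity stated at the end of Section 4. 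Unitarity of the linear propagator then gives
\[
\|Su\|_{L^2}\le \|x\partial_x f\|_{L^2} + \alpha t|c_*|\,\|u\|_{L^\infty}^2\|u\|_{L^2}.
\]

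With the identity in hand, the rest is routine. Lemma \ref{disperlem} combined with \eqref{a1} produces $\|u\|_{L^\infty}\lesssim \varepsilon_1(1+t)^{-1/2}$: the $Z$-norm bound $\|(1+|\xi|)^{10}\hat f\|_{L^\infty}\le\varepsilon_1$ yields $\||\xi|^{(2-\alpha)/2}\hat f\|_{L^\infty}\lesssim\varepsilon_1$ by splitting $|\xi|\le 1$ and $|\xi|\ge 1$ (using $(2-\alpha)/2<1\le 10$), while the $H^2$ and weighted $L^2$ terms contribute $(1+t)^{-1/2-p_0}\varepsilon_1(1+t)^{p_0}$. Combined with $\|x\partial_x f\|_{L^2},\|f\|_{L^2}\le\varepsilon_1(1+t)^{p_0}$, the cubic correction to $Su$ is lower order and $\|Su\|_{L^2}\lesssim\varepsilon_1(1+t)^{p_0}$. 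Assembling everything,
\[
\|S(|u|^2 u)\|_{L^2}\lesssim \varepsilon_1^3(1+t)^{-1+p_0},
\]
and the hypothesis $\varepsilon_1<\varepsilon_0^{2/3}$ gives $\varepsilon_1^3\le\varepsilon_0^2\le\varepsilon_0$, so choosing $\varepsilon_0$ small enough absorbs the implicit constants to reach the advertised bound $4\varepsilon_0(1+t)^{-1+p_0}$. The main obstacle is the Fourier-side derivation of the identity for $Su$; once it is available, the remaining estimates are a direct application of Lemma \ref{disperlem} and the bootstrap hypothesis.
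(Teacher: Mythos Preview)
Your proof is correct and in fact cleaner than the paper's. The paper argues by an auxiliary continuity (bootstrap) argument on the quantity $\|S(|u|^2u)\|_{L^2}$ itself: it checks the bound at $t=0$, then, assuming \eqref{40} holds, estimates $\|S(|u|^2u)\|_{L^2}\le \|u\|_{L^\infty}^2\|Su\|_{L^2}$ and invokes a bound on $\|Su\|_{L^2}$ to improve the constant. The logical status of that $\|Su\|_{L^2}$ bound is somewhat opaque in the paper's presentation (it is only established explicitly in the subsequent Lemma, which in turn quotes \eqref{40}); the intended reading is presumably a joint continuity argument on $\|Su\|_{L^2}$ and $\|S(|u|^2u)\|_{L^2}$ together.

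You bypass this entirely by first deriving the identity
\[
Su = e^{-\mathrm{i}t(-\Delta)^{\alpha/2}}(x\partial_x f) - \mathrm{i}\alpha t c_*|u|^2 u,
\]
which the paper itself establishes later (in the proof of Lemma~5.3) in the equivalent form $Su=e^{-\mathrm{i}t(-\Delta)^{\alpha/2}}(x\partial_x f)+\alpha t\,e^{-\mathrm{i}t(-\Delta)^{\alpha/2}}\partial_t f$. Feeding the main bootstrap hypothesis \eqref{a1} directly into this identity gives $\|Su\|_{L^2}\lesssim\varepsilon_1(1+t)^{p_0}$ without any auxiliary continuity argument, and then $\|S(|u|^2u)\|_{L^2}\lesssim\varepsilon_1^3(1+t)^{-1+p_0}\le\varepsilon_0^2(1+t)^{-1+p_0}$ follows at once. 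Your route is more direct and removes the apparent circularity between Lemmas~5.1 and~5.2; the price is that you front-load the Fourier-side computation that the paper postpones to Lemma~5.3.
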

\begin{proof}
	If $t=0$, we have
	\begin{align*}
		||S(|u|^2u)(0)||_{L^{2}(\mathbb{R})} & \leq
		||u_0||^2_{L^{\infty}(\mathbb{R})}||x \cdot \partial u_0||_{L^{2}(\mathbb{R})}
		\\
		& \leq 2\varepsilon_0.
	\end{align*}
	For any $t \in (0,T]$, we assume that
	\begin{equation}
		||S(|u|^2u)(t)||_{L^{2}(\mathbb{R})}\leq 4\varepsilon_0(1+t)^{-1+p_0}.
	\end{equation}
	By the assumption \eqref{a1} and Lemma \ref{disperlem}, it follows easily that
	\begin{align*}
		||S(|u|^2u)(t)||_{L^{2}(\mathbb{R})} & \leq ||u||^2_{L^{\infty}(\mathbb{R})}||Su(t)||_{L^{2}(\mathbb{R})}
		\\
		&	\lesssim \varepsilon^2_1 (1+t)^{-1}\varepsilon_0(1+t)^{-1+p_0}
		\\
		&	\lesssim \varepsilon_0 \varepsilon^2_1 (1+t)^{-2+p_0}
		\\
		& \leq 2\varepsilon_0(1+t)^{-1+p_0}.
	\end{align*}
	Using the bootstrap arguement, we conclude the proof of \eqref{40}.
\end{proof}
\begin{Lemma}
	With the same assumptions in Proposition \ref{bootstrap1}, we have, for any $t \in [0,T]$,
	\begin{equation}
		||u(t)||_{H^{N_0}(\mathbb{R})}+||Su(t)||_{L^{2}(\mathbb{R})} \lesssim \varepsilon_0(1+t)^{p_0}.
	\end{equation}
\end{Lemma}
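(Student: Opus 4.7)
The plan is to establish the two bounds separately, treating the $H^{N_0}$ bound as a direct Gronwall argument from the local estimate and the $\|Su\|_{L^2}$ bound via an energy estimate for the equation satisfied by $Su$.

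\textbf{Step 1 (high Sobolev norm).} First, I would extract an $L^\infty$ decay bound on $u$ from the dispersive estimate (Lemma \ref{disperlem}). Under the bootstrap hypothesis \eqref{a1}, the $Z$ norm controls $\||\xi|^{(2-\alpha)/2}\hat f(\xi)\|_{L^\infty_\xi}$ uniformly (since $(2-\alpha)/2 \in (1/2,5/6)$ is dominated by $(1+|\xi|)^{10}$ for large $\xi$ and bounded by $1$ near the origin), while $\|f\|_{H^2}+\|x\partial f\|_{L^2}\lesssim \varepsilon_1(1+t)^{p_0}$. Together these yield
\begin{equation*}
\|u(t)\|_{L^\infty}\lesssim \varepsilon_1(1+t)^{-\frac{1}{2}}.
\end{equation*}
Plugging this into the local estimate \eqref{local} from Theorem \ref{localthm} gives
\begin{equation*}
\|u(t)\|_{H^{N_0}}\leq \|u_0\|_{H^{N_0}}+C\varepsilon_1^2\int_0^t\|u(s)\|_{H^{N_0}}(1+s)^{-1}ds,
\end{equation*}
and Gronwall's inequality yields $\|u(t)\|_{H^{N_0}}\lesssim \varepsilon_0(1+t)^{C\varepsilon_1^2}$. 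Since $\varepsilon_1\le \varepsilon_0^{2/3}$ is small, $C\varepsilon_1^2\le p_0$, giving $\|u(t)\|_{H^{N_0}}\lesssim \varepsilon_0(1+t)^{p_0}$.

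\textbf{Step 2 (scaling field norm).} Applying $S$ to \eqref{NFSE} and invoking the commutator identity $[S,\mathrm{i}\partial_t-(-\Delta)^{\alpha/2}]=-\alpha\mathrm{i}(\mathrm{i}\partial_t-(-\Delta)^{\alpha/2})$ proved in the previous lemma, I obtain
\begin{equation*}
(\mathrm{i}\partial_t-(-\Delta)^{\alpha/2})Su=S(c_*|u|^2u)+\mathrm{i}\alpha c_*|u|^2u.
\end{equation*}
Pairing with $Su$ in $L^2$, using the self-adjointness of $(-\Delta)^{\alpha/2}$ and taking the imaginary part leads to a standard energy identity
\begin{equation*}
\frac{d}{dt}\|Su(t)\|_{L^2}\lesssim \|S(|u|^2u)\|_{L^2}+\||u|^2u\|_{L^2}.
\end{equation*}
The first term is controlled by Lemma \ref{yuxiang} by $\varepsilon_0(1+t)^{-1+p_0}$; for the second, $L^2$ conservation for \eqref{NFSE} gives $\|u(t)\|_{L^2}=\|u_0\|_{L^2}\leq \varepsilon_0$, so with the dispersive bound above $\||u|^2u\|_{L^2}\leq \|u\|_{L^\infty}^2\|u\|_{L^2}\lesssim \varepsilon_0\varepsilon_1^2(1+t)^{-1}$. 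Since at $t=0$ one has $Su(0)=x\partial_x u_0$, whose $L^2$ norm is $\leq \varepsilon_0$, integration in time yields
\begin{equation*}
\|Su(t)\|_{L^2}\lesssim \varepsilon_0+\varepsilon_0\int_0^t(1+s)^{-1+p_0}ds\lesssim \varepsilon_0(1+t)^{p_0}.
\end{equation*}

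\textbf{Main obstacle.} The steps are otherwise direct, and the delicate point is ensuring that the dispersive estimate of Lemma \ref{disperlem} delivers the sharp $(1+t)^{-1/2}$ decay compatible with the bootstrap growth rate $(1+t)^{p_0}$; this requires that the $Z$-norm genuinely majorizes $\||\xi|^{(2-\alpha)/2}\hat f\|_{L^\infty_\xi}$ uniformly for all $\alpha\in(1/3,1)$, and that the lower-order $(1+t)^{-1/2-p_0}$ contribution absorbs the $(1+t)^{p_0}$ growth of the Sobolev and weighted norms. Once this is in place, the Gronwall closure works because $\varepsilon_1$ is much smaller than $\sqrt{p_0}$, and the linear-in-$t^{p_0}$ integral that appears in Step 2 produces no logarithmic loss thanks to the positive exponent $p_0>0$.
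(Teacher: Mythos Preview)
Your proof is correct and follows essentially the same approach as the paper. The only minor difference is in Step~1: the paper substitutes the bootstrap bound $\|u(s)\|_{H^{N_0}}\le \varepsilon_1(1+s)^{p_0}$ directly into the integral (obtaining $\varepsilon_0+\varepsilon_1^3(1+t)^{p_0}\lesssim \varepsilon_0(1+t)^{p_0}$), whereas you close via Gronwall to get $\varepsilon_0(1+t)^{C\varepsilon_1^2}$; both routes are standard and yield the same conclusion.
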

\begin{proof}
	Following basic energy estimate \eqref{local} and  dispersive estimate \eqref{disper-equ1}, for any $t \in[0,T]$, we have
	\begin{align*}
		||u(t)||_{H^{N_0}(\mathbb{R})} & \lesssim ||u_0||_{H^{N_0}(\mathbb{R})}+\int^{t}_{0}||u(s)||_{H^{N_0}}||u(s)||^2_{L^\infty}ds \\
		& \lesssim \varepsilon_0+\int^{t}_{0}\varepsilon_1(1+s)^{p_0} \varepsilon^2_1(1+s)^{-1}ds
		\\
		& \lesssim \varepsilon_0+{\varepsilon^3_1}(1+t)^{p_0}
		\\
		& \lesssim \varepsilon_0(1+t)^{p_0}.
	\end{align*}
	Note 
	\begin{align*}
		(\mathrm{i} \partial_t-(-\Delta)^{\frac{\alpha}{2}})Su&=[S, \mathrm{i} \partial_t-(-\Delta)^{\frac{\alpha}{2}}]u-S(\mathrm{i} \partial_t u-(-\Delta)^{\frac{\alpha}{2}}u)\\
		&=\mathrm{i} \alpha |u^2|u-S(|u|^2u).
	\end{align*}
	Therefore, we have
	\begin{align*}
		||Su(t)||_{L^{2}(\mathbb{R})} &\lesssim ||Su(0)||_{L^{2}(\mathbb{R})}+\int^t_0 |||u^2|u(s)||_{L^{2}(\mathbb{R})}ds+\int^t_0 ||S(|u|^2u)(s)||_{L^{2}(\mathbb{R})}ds
		\\ &\lesssim ||x \cdot \partial u_0||_{L^{2}(\mathbb{R})}+\int^t_0 \varepsilon^3_1(1+s)^{-1+p_0}ds+\int^t_0 \varepsilon_0(1+s)^{-1+p_0}ds
		\\
		& \lesssim \varepsilon_0(1+t)^{p_0}.
	\end{align*}
\end{proof}
\begin{Lemma}
	With the same assumptions in Proposition \ref{bootstrap1}, we have, for any $t \in [0,T]$,
	\begin{equation}\label{390}
		\sup_{t\geq 0}||x\cdot{\partial f(t)}||_{L^{2}(\mathbb{R})} \lesssim \varepsilon_0(1+t)^{p_0}.
	\end{equation}
\end{Lemma}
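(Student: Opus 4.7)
The plan is to transfer the weighted bound $\|x\cdot\partial f\|_{L^2}$ to the scaling-vector-field bound $\|Su\|_{L^2}$ already established in the previous lemma, up to a lower-order term that is controlled directly by the equation. The connection between the two quantities lives on the Fourier side: using $\hat f(\xi,t)=e^{\mathrm{i}t|\xi|^\alpha}\hat u(\xi,t)$ together with $\widehat{x\partial_x g}(\xi)=-\partial_\xi(\xi\hat g)(\xi)$, I would differentiate and simplify to obtain the pointwise identity
\begin{equation*}
\widehat{x\partial_x f}(\xi,t) \;=\; e^{\mathrm{i}t|\xi|^\alpha}\,\widehat{Su}(\xi,t) \;-\; \alpha t\,\partial_t\hat f(\xi,t).
\end{equation*}
The key cancellation is that the two phase contributions of the form $\mathrm{i}t\alpha|\xi|^\alpha e^{\mathrm{i}t|\xi|^\alpha}\hat u$ produced by $\alpha t\,\partial_t\hat f$ and $\xi\,\partial_\xi\hat f$ coincide, so they disappear when one subtracts the two and matches against $\widehat{Su}=\alpha t\,\partial_t\hat u-\hat u-\xi\,\partial_\xi\hat u$.

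Plancherel's theorem and the unitarity of $e^{\mathrm{i}t|\xi|^\alpha}$ then reduce the task to controlling $\|Su(t)\|_{L^2}$ and $t\,\|\partial_t\hat f(t)\|_{L^2}$ separately. The first term is $\lesssim\varepsilon_0(1+t)^{p_0}$ by the preceding lemma. For the second, plugging $\hat f=e^{\mathrm{i}t|\xi|^\alpha}\hat u$ into \eqref{NFSE} shows $\partial_t\hat f=-\mathrm{i}c_*\,e^{\mathrm{i}t|\xi|^\alpha}\,\widehat{|u|^2u}$, so by Plancherel and H\"older
\begin{equation*}
\|\partial_t\hat f(t)\|_{L^2}\;\leq\;|c_*|\,\|u(t)\|_{L^\infty}^{2}\,\|u(t)\|_{L^2}.
\end{equation*}
The $L^{2}$-conservation law for the cubic fractional NLS pins $\|u(t)\|_{L^{2}}$ to $\|u_0\|_{L^{2}}\leq\varepsilon_0$, and the dispersive estimate of Lemma \ref{disperlem} applied to $f$---using that $\||\xi|^{(2-\alpha)/2}\hat f\|_{L^\infty}\lesssim\|f\|_Z$ since $0<(2-\alpha)/2<10$---combined with the bootstrap hypothesis \eqref{a1} yields $\|u(t)\|_{L^\infty}\lesssim\varepsilon_{1}(1+t)^{-1/2}$. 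Multiplying through by $t$ gives $\alpha t\,\|\partial_t\hat f\|_{L^2}\lesssim t(1+t)^{-1}\varepsilon_{0}\varepsilon_{1}^{2}\lesssim\varepsilon_{0}$, and summing the two contributions produces \eqref{390}.

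I do not anticipate any serious analytic difficulty: the main obstacle is simply the bookkeeping in the Fourier identity, since a stray sign would turn the cancellation into a doubling. Once the identity is in hand, every estimate plugs directly into facts already available---the $\|Su\|_{L^2}$ bound of the previous lemma, $L^{2}$-conservation for \eqref{NFSE}, and the dispersive/bootstrap hypotheses. In particular, no dyadic decomposition in the spirit of Lemma \ref{partialm} is needed here, because the cubic nonlinearity is estimated in $L^{2}$ by the trivial H\"older split rather than through frequency localization.
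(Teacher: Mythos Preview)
Your proposal is correct and follows essentially the same route as the paper: the paper records the physical-space identity $Su=e^{\mathrm{i}t(-\Delta)^{\alpha/2}}(x\cdot\partial f)+\alpha t\,e^{\mathrm{i}t(-\Delta)^{\alpha/2}}(\partial_t f)$, which is exactly your Fourier identity rearranged, and then bounds $\|x\cdot\partial f\|_{L^2}\lesssim\|Su\|_{L^2}+t\|\partial_t f\|_{L^2}$ using the previous lemma and $\partial_t f=-\mathrm{i}c_*e^{\mathrm{i}t(-\Delta)^{\alpha/2}}(|u|^2u)$. The only cosmetic difference is that you invoke $L^2$-conservation for $\|u\|_{L^2}$ whereas the paper uses the already-established $H^{N_0}$ bound, picking up a harmless extra $(1+t)^{p_0}$.
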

\begin{proof}
	Let $f(t)=e^{\mathrm{i}t(-\Delta)^{\frac{\alpha}{2}}}u(t), t\in [0, T]$ and we have
	\begin{equation*}
		e^{\mathrm{i}t(-\Delta)^{\frac{\alpha}{2}}}\partial_t f=(\partial_t-\mathrm{i}(-\Delta)^{\frac{\alpha}{2}})u,
	\end{equation*}
	\begin{equation*}
		Su=e^{\mathrm{i}t(-\Delta)^{\frac{\alpha}{2}}}(x \cdot \partial f)+\alpha t e^{\mathrm{i}t(-\Delta)^{\frac{\alpha}{2}}}(\partial_t f).
	\end{equation*}
	Therefore, it follows
	\begin{align*}
		||x\cdot{\partial f(t)}||_{L^{2}(\mathbb{R})}
		&\lesssim ||Su(t)||_{L^{2}(\mathbb{R})}+
		t||\partial_t f||_{L^{2}(\mathbb{R})}
		\\
		&\lesssim ||Su(t)||_{L^{2}(\mathbb{R})}+
		t||e^{-\mathrm{i}t(-\Delta)^{\frac{\alpha}{2}}}(u^2\bar{u})||_{L^{2}(\mathbb{R})}
		\\
		&\lesssim ||Su(t)||_{L^{2}(\mathbb{R})}+
		t||u^2\bar{u}||_{L^{2}(\mathbb{R})}
		\\
		&\lesssim \varepsilon_0(1+t)^{p_0}+\varepsilon_0 t(1+t)^{-1+p_0}
		\\
		&\lesssim \varepsilon_0(1+t)^{p_0}.
	\end{align*}
\end{proof}
\begin{Lemma}\label{lemkey}
	With the same assumptions in Proposition \ref{bootstrap1}, we have, for any $t \in [0,T]$,
	\begin{equation}\label{Zgoal}
		\sup_{t\geq 0}||f(t)||_{Z} \lesssim \varepsilon_0.
	\end{equation}
\end{Lemma}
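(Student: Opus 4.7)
The plan is to base the proof on the Duhamel formula $\hat{f}(\xi,t)-\hat{f}(\xi,0)=-i(2\pi)^{-2}c_0c_*\int_0^t I(\xi,s)\,ds$ of Section~4, and to extract from the trilinear integral $I(\xi,s)$ precisely the logarithmic resonant contribution that is absorbed by the gauge $H(\xi,t)$ of \eqref{H}. Since $(1+|\xi|)^{10}|\hat{f}(\xi,0)|\le\varepsilon_0$ by hypothesis and $|e^{iH}|=1$, the claim $\|f(t)\|_Z\lesssim\varepsilon_0$ is equivalent to showing that $e^{iH(\xi,t)}\hat{f}(\xi,t)$ is uniformly Cauchy in $t$ in $(1+|\xi|)^{10}L^\infty_\xi$; I would aim for the stronger Cauchy rate $(1+t)^{-p_1}$, so that the scattering bound in Proposition~\ref{bootstrap1} drops out in parallel.

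The first step is a dyadic time--frequency decomposition $s\in[2^m,2^{m+1}]$ and $I(\xi,s)=\sum_{k_1\le k_2\le k_3}I_{k_1,k_2,k_3}(\xi,s)$, together with the identification of the non-critical dyadic configurations: $k_3$ large enough for the $H^{N_0}$ bound \eqref{a11} to win, $k_1$ small enough for low-frequency $L^2$ shrinkage to win, $|\xi|$ very large, or the $k_i$ widely separated. In each such case Lemma~\ref{partialm} together with \eqref{a11}--\eqref{a13} gives a dyadic bound of the form $\varepsilon_1^3\,2^{-(1+\delta)m}2^{-10\max(k_i,\log_2|\xi|)^+}$, which is geometrically summable and contributes at most $\varepsilon_1^3\lesssim\varepsilon_0$ after using $\varepsilon_1\le\varepsilon_0^{2/3}$. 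The remaining critical sector is $k_1\approx k_2\approx k_3\approx k$ with $2^k\sim|\xi|$ of moderate size $(1+t)^{-1/2-p_0}\lesssim|\xi|\lesssim(1+t)^{1/2}$ and $(\eta,\sigma)$ near the space--time resonance.

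On the critical sector I would apply space--time resonance analysis. The phase $\Psi(\xi,\eta,\sigma)=|\xi|^\alpha-|\xi-\eta|^\alpha-|\eta-\sigma|^\alpha+|\sigma|^\alpha$ vanishes together with $\nabla_{(\eta,\sigma)}\Psi$ only at the point where the three input frequencies $(\xi-\eta,\eta-\sigma,\sigma)$ equal $(\xi,\xi,-\xi)$; there the integrand collapses to $|\hat{f}(\xi,s)|^2\hat{f}(\xi,s)$ and the Hessian $\Psi''$ is nondegenerate and indefinite with $|\det\Psi''|=(\alpha(1-\alpha))^2|\xi|^{2(\alpha-2)}$. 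A two-dimensional stationary-phase expansion at this point produces a leading contribution proportional to $(s+1)^{-1}|\xi|^{2-\alpha}|\hat{f}(\xi,s)|^2\hat{f}(\xi,s)$ with coefficients calibrated exactly so that \eqref{H} yields
\begin{equation*}
\partial_t\bigl[e^{iH(\xi,t)}\hat{f}(\xi,t)\bigr]=-i(2\pi)^{-2}c_0c_*\,e^{iH(\xi,t)}R(\xi,t),
\end{equation*}
where the remainder $R(\xi,s)$ comes from the integration-by-parts remainder outside a shrinking resonant window (using $|\nabla_{(\eta,\sigma)}\Psi|\gtrsim 2^{(\alpha-2)k}\mathrm{dist}((\eta,\sigma),\mathrm{crit})$ and absorbing $\xi$-derivatives on the profile via \eqref{a12}) together with the higher-order stationary-phase correction inside the window. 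The crucial bound to prove is $(1+|\xi|)^{10}|R(\xi,s)|\lesssim\varepsilon_1^3(1+s)^{-1-p_1}$; once established, integration in $t$ yields both the Cauchy bound and the scattering rate.

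The main obstacle is this remainder estimate: one must gain a genuine positive power $s^{-p_1}$ beyond the natural $s^{-1}$ scale of the stationary-phase leading term. Achieving this requires a delicate choice of the cutoff radius separating the resonant window from the non-resonant region, tuned against the $s^{p_0}$ loss inherent in the weighted-$L^2$ bootstrap \eqref{a12}, together with a two-stage integration by parts on the non-resonant side that exploits both the $\eta$ and $\sigma$ components of $\nabla\Psi$ to pick up two factors of $s^{-1}$. Moreover the stationary-phase constant $2\pi/(\alpha(1-\alpha))$ degenerates both as $\alpha\to 1$ and as $|\xi|\to 0$, so the weight $(1+|\xi|)^{10}$ must be balanced carefully against this low-frequency blow-up; the lower cutoff $|\xi|\gtrsim(1+t)^{-1/2-p_0}$ enforced by the middle-frequency reduction of the second step is precisely what makes this balance possible.
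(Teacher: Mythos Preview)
Your architecture---gauge by $e^{iH}$, dyadic decomposition of $I(\xi,s)$, stationary phase at the space--time resonance producing the logarithmic correction $c_0|\xi|^{2-\alpha}(s+1)^{-1}|\hat f|^2\hat f$, and integration by parts in $(\eta,\sigma)$ outside a shrinking window---is exactly the paper's. Two points, however, need sharpening.

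First, the output-frequency window on which the delicate analysis runs is much narrower than you state: the paper restricts to $|k|\le 5p_0m$ (so $|\xi|$ lies within a factor $2^{\pm 5p_0m}$ of $1$), disposing of all other $\xi$ by the one-line interpolation $\|\widehat{P_kf}\|_{L^\infty}^2\lesssim\|\widehat{P_kf}\|_{L^2}\|\partial\widehat{P_kf}\|_{L^2}$ and summing in $k$. The stationary-phase remainder carries powers of $2^{k(2-\alpha)}$ (see \eqref{lbar} and \eqref{40000}) that are harmless only on this narrow window, not on your stated range $(1+t)^{-1/2-p_0}\lesssim|\xi|\lesssim(1+t)^{1/2}$.

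Second, and more substantively, the non-resonant configurations are not all closed by Lemma~\ref{partialm}, \eqref{a11}--\eqref{a13}, and integration by parts in $(\eta,\sigma)$. When the lowest input frequency satisfies $k_1\in[-4m,-\tfrac{3}{8\alpha}m]$ with $k_1+k_2\ge-1.2m$ and the $k_i$ separated from $k$ (the regime of Lemma~\ref{11}), integrating by parts in $\eta$ gains $s^{-1}2^{k_1(1-\alpha)}$ from $|\partial_\eta\Psi|\gtrsim 2^{k_1(\alpha-1)}$, but the derivative landing on $\hat f_{k_1}$ costs $2^{-k_1}$ via \eqref{a12}; the net factor $s^{-1}2^{-\alpha k_1}$ fails to be summable once $k_1<-\tfrac{3}{8\alpha}m$. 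The paper closes this case by integration by parts in \emph{time}---a normal form---using a lower bound on the phase $|\Psi+\partial_sH|$ itself and then invoking Lemma~\ref{partialm} to control the resulting $\partial_s\hat f_{k_j}$ terms together with $\partial_s^2H$. This time-nonresonance mechanism is essential and is absent from your outline.
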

\begin{proof}
	For any $t \in [2^m-2,2^{m+1}] \cap [0,T]$ with $m\in \mathbb{Z}^+$,  $|\xi|\in [2^k,2^{k+1}]$ with $k \in \mathbb{Z}$ and
	\begin{equation*}
		k \in (-\infty, -5p_0m)\cup(5p_0m, +\infty),
	\end{equation*}
	we obtain
	\begin{align*}
		\left(1+|\xi|\right)^{10}||\hat{P}_kf(\xi,t)||_{L^\infty_\xi}
		& \leq (1+2^k)^{10}\{2^{-k}||\hat{P}_kf||_{L^2}(2^k)||\partial \hat{P}_kf||_{L^2}+||\hat{P}_kf||_{L^2}\}^{\frac{1}{2}}
		\\
		& \lesssim \varepsilon_0 (1+t)^{p_0}2^{-\frac{|k|}{2}}.
	\end{align*}
	As a result,
	\begin{align*}
		\sum_{|k|>5p_0m}\left(1+|\xi|\right)^{10}||\hat{P}_kf(\xi,t)||_{L^\infty_\xi} &\lesssim \varepsilon_0 (1+t)^{p_0}\sum_{|k|>5p_0m}2^{-\frac{|k|}{2}}
		\\
		& \lesssim \varepsilon_0 (1+t)^{-\frac{3}{2}p_0}
		\\
		& \lesssim \varepsilon_0.
	\end{align*}
	It remains to prove for $k \in [-5p_0m,5p_0m]$
	\begin{equation}\label{41}
		\left(1+|\xi|\right)^{10}|\hat{P}_kf(\xi,t)| \lesssim \varepsilon_0,
	\end{equation}
	where
	\begin{equation*}
		\hat{f}(\xi,t)=\hat{u}_0(\xi)-\textit{i}(2\pi)^{-2}c_*\int_{\mathbb{R}^2}e^{it\Psi(\xi,\eta,\sigma)}\hat{f}{}(\xi-\eta)\hat{f}(\eta-\sigma)\hat{\bar{f}}(\sigma)d\eta d\sigma.
	\end{equation*}
	Note that the contribution of space-time resonances lie in the points where
	\begin{equation*}
		\Psi(\xi,\eta,\sigma)=\partial_\eta \Psi(\xi,\eta,\sigma)=\partial_\sigma \Psi(\xi,\eta,\sigma)=0.
	\end{equation*}
	Therefore, the spacetime resonance arises in $(\xi,\eta,\sigma) \in \{(\xi,0,\xi),(\xi,0,-\xi)\}$. Fortunately,  these points are not absolutely integrable in time. This is to a large extent the key of establishing \eqref{41} for $k \in [-5p_0m,5p_0m]$.

	To achieve this goal, we recall
	\begin{equation*}
		H(\xi,t)=c_0c_*|\xi|^{2-\alpha}\int^t_0 \left|\hat{f}(\xi,r)\right|^2\frac{dr}{r+1}, \ c_0=\frac{2\pi}{\alpha(1-\alpha)},
	\end{equation*}
	and we have
	\begin{equation*}
		\frac{\textit{d}}{\textit{d}t} \left\{\hat{f}(\xi,t)e^{\textit{i}H(\xi,t)}\right\}=-\textit{i}(2\pi)^{-2}c_*e^{\textit{i}H(\xi,t)}\left( I(\xi,t)-c_0|\xi|^{2-\alpha}\frac{|\hat{f}(\xi,t)|^2\hat{f}(\xi,t)}{t+1} \right) ,
	\end{equation*}
	where
	\begin{equation*}
		I(\xi,t):=\int_{\mathbb{R}^2}e^{it\Psi(\xi,\eta,\sigma)}\hat{f}{}(\xi-\eta)\hat{f}(\eta-\sigma)\hat{\bar{f}}(\sigma)d\eta d\sigma.
	\end{equation*}
	By the decomposition of frequency, one can reduce \eqref{41} to Lemma \ref{reduction}. Thus, we conclude our proof of Lemma \ref{lemkey} by Lemma \ref{reduction}.
\end{proof}

\begin{Lemma}\label{reduction}
	Assume that $k \in [-5p_0m,5p_0m]$, $m\in \{1,2,\cdots \}$, $\xi \in [2^k,2^{k+1}]$, and $t_1 \leq t_2 \in [2^{m}-2,2^{m+1}] \cap [0,T]$. Then
	\begin{align}\label{final}
		&\sum_{k_1,k_2,k_3 \in \mathbb{Z}}\left|\int^{t_2}_{t_1}e^{iH(\xi,s)}\left[I_{k_1,k_2,k_3}(\xi,s)-c_0c_*\frac{|\xi|^{2-\alpha}\hat{f}_{k_1}(\xi,s)\hat{f}_{k_2}(\xi,s)\hat{\bar{f}}_{k_3}(-\xi,s)}{s+1}\right]ds \right|
		\nonumber
		\\
		&\leq \varepsilon^3_12^{-p_0m}2^{-10k^+},
	\end{align}
	where $H(\xi,s)=c_0c_*|\xi|^{2-\alpha}\int^s_0 \left|\hat{f}(\xi,r)\right|^2\frac{dr}{r+1}$ and $c_0=\frac{2\pi}{\alpha(1-\alpha)}$.
\end{Lemma}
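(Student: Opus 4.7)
My plan is to prove \eqref{final} by splitting the triple dyadic sum $\sum_{k_1\le k_2\le k_3}$ into an \emph{off-diagonal} regime where at least one of $k_1,k_2,k_3$ differs from $k$ by more than a fixed constant, and an \emph{on-diagonal} regime with $k_1,k_2,k_3\in[k-3,k+3]$. In the off-diagonal regime the subtracted term is identically zero because of the Littlewood--Paley support conditions ($|\xi|\sim 2^k$ forces $\hat f_{k_j}(\pm\xi)=0$ whenever $|k_j-k|>3$), so one only needs to estimate $\int_{t_1}^{t_2}e^{iH(\xi,s)}I_{k_1,k_2,k_3}(\xi,s)\,ds$. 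In the on-diagonal regime the subtracted expression must be recovered as the leading-order asymptotic of a two-dimensional stationary-phase expansion of $I_{k_1,k_2,k_3}$.

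For the off-diagonal contribution, I would estimate $I_{k_1,k_2,k_3}$ using Lemma \ref{m} applied to the trilinear form with multiplier $e^{is\Psi}$, placing the two lowest-frequency factors in $L^\infty$ via \eqref{441} and the highest in $L^2$ via \eqref{a11}. After integration over $s\in[t_1,t_2]\subset[2^m-2,2^{m+1}]$, the pointwise bound together with the $2^{-N_0 k_j^+}$ suppression at high frequencies and the $2^{k_j/2}$ gain at very negative frequencies sums to within the right-hand side of \eqref{final}. In the near-diagonal cases where $|k_j-k|$ is only moderately large but the phase $\Psi$ is bounded below on the relevant $(\eta,\sigma)$-support, I would instead integrate by parts in $s$ to pick up a factor $s^{-1}/|\Psi|$, using Lemma \ref{partialm} to control the time derivatives of $\hat f_{k_i}$ that then appear.

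For the on-diagonal regime I would introduce smooth cutoffs $\chi_-$ and $\chi_+$ in $(\eta,\sigma)$ supported on small neighborhoods of the two distinguished points $(0,-\xi)$ and $(0,\xi)$, together with a remainder cutoff $1-\chi_--\chi_+$. On the remainder cutoff $|\nabla_{\eta,\sigma}\Psi|$ is bounded below by a multiple of $|\xi|^{\alpha-1}$, and on $\chi_+$ only $\partial_\eta\Psi$ is bounded below (since $\partial_\sigma\Psi$ vanishes there); in both cases spatial integration by parts using \eqref{a12} contributes an extra factor $s^{-1}|\xi|^{1-\alpha}$ that is absorbed into the error. The remaining $\chi_-$ piece isolates the genuine space--time resonance at $(0,-\xi)$: the Hessian of $\Psi$ there satisfies $|\det\mathrm{Hess}_{(\eta,\sigma)}\Psi(0,-\xi)|^{1/2}=\alpha(1-\alpha)|\xi|^{\alpha-2}$, and two-dimensional stationary phase produces the leading term
\begin{equation*}
\frac{2\pi}{s\,\alpha(1-\alpha)}|\xi|^{2-\alpha}\hat f_{k_1}(\xi,s)\hat f_{k_2}(\xi,s)\hat{\bar{f}}_{k_3}(-\xi,s)=\frac{c_0|\xi|^{2-\alpha}}{s}\hat f_{k_1}(\xi,s)\hat f_{k_2}(\xi,s)\hat{\bar{f}}_{k_3}(-\xi,s),
\end{equation*}
which matches the subtracted quantity up to the harmless replacement $s\leftrightarrow s+1$. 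The stationary-phase remainder involves two $(\eta,\sigma)$-derivatives of the profiles and is estimated using \eqref{a11}--\eqref{a13}, \eqref{a12} and Lemma \ref{partialm}.

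The hardest step will be to make the stationary-phase analysis above quantitatively uniform in $\alpha\in(1/3,1)$ and $k\in[-5p_0 m,5p_0 m]$. The natural localization scale at $(0,-\xi)$ is $2^{l_0}\sim s^{-1/2}|\xi|^{1-\alpha/2}$, exactly the scale used already in the proof of Lemma \ref{disperlem}, so $\chi_-$ must be dyadically decomposed into shells $\varphi^{(l_0)}_l$ and the contributions summed over $l\ge l_0$ while the $|\xi|^{2-\alpha}$-singularity at $\xi=0$ is balanced against the growth factors $2^{p_0 m}$ and $2^{2p_0 m}$ coming from \eqref{a11}--\eqref{a12} and Lemma \ref{partialm}. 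The restriction $\alpha>1/3$ emerges precisely as the threshold needed for this summed remainder to remain $\lesssim\varepsilon_1^3 2^{-p_0 m}2^{-10k^+}$, at which point combining it with the off-diagonal and nonstationary pieces concludes \eqref{final}.
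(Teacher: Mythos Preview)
Your on-diagonal analysis is essentially what the paper does in its Lemma 5.7: after the change of variables putting the genuine space--time resonance at the origin, one localises at the stationary-phase scale $2^{\bar l}\sim s^{-1/2}|\xi|^{1-\alpha/2}$, extracts the leading term $c_0|\xi|^{2-\alpha}s^{-1}\hat f_{k_1}(\xi)\hat f_{k_2}(\xi)\hat{\bar f}_{k_3}(-\xi)$, and controls the outer shells by spatial integration by parts. That part of your plan is fine.

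The gap is in the off-diagonal regime. You propose to handle it by (i) the crude trilinear bound with two factors in $L^\infty$ and one in $L^2$, supplemented by (ii) integration by parts in $s$ ``where the phase $\Psi$ is bounded below''. Neither of these covers, for instance, the range $k_1,k_2,k_3\in[-\tfrac{3}{8\alpha}m,0]$ with $\max_j|k_j-k|\ge 16$. There the direct bound gives, after integrating over $s\in[t_1,t_2]$,
\[
2^{m}\cdot\varepsilon_1^3\,2^{p_0 m}2^{-N_0 k_3^+}\cdot 2^{-m/2}\cdot 2^{-m/2}=\varepsilon_1^3\,2^{p_0 m},
\]
which misses \eqref{final} by a factor $2^{2p_0 m}$. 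And temporal IBP is unavailable: the phase $\Phi(\xi,\eta,\sigma)=|\xi|^\alpha-|\xi+\eta|^\alpha-|\xi+\sigma|^\alpha+|\xi+\eta+\sigma|^\alpha$ vanishes identically on $\{\eta=0\}\cup\{\sigma=0\}$, and more generally on a codimension-one set that intersects the off-diagonal support, so $|\Psi|$ is \emph{not} bounded below there. This is the core of the space--time resonance method: one must choose between spatial and temporal IBP according to which of $\nabla_{\eta,\sigma}\Psi$ or $\Psi$ is nondegenerate, and in most of the off-diagonal regime it is the gradient, not the phase itself, that has a lower bound.

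The paper fills exactly this gap by spatial integration by parts in $\eta$: when $k_3-k_1\ge 5$ one has $|\partial_\eta\Phi|\gtrsim 2^{(\alpha-1)k_1}$ on the support (Lemma 5.8), and when $k_3-k_1\le 4$ but all $k_j$ are far from $k$ one has $|\partial_\eta\Phi|\gtrsim 2^{(\alpha-1)k_2}$ (Lemma 5.9). Integration by parts in $s$ is reserved for the single sub-case $k_1\in[-4m,-\tfrac{3}{8\alpha}m]$, $k_1+k_2\ge -1.2m$ (Lemma 5.11), where one first verifies that $|\Phi|$ is genuinely bounded below. Your plan needs this spatial-IBP mechanism in the off-diagonal regime; without it the argument does not close.

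A minor remark: the restriction $\alpha>\tfrac13$ enters through the dispersive estimate (Lemma \ref{disperlem}) and hence through \eqref{441}, rather than appearing as a new threshold in the stationary-phase remainder of the on-diagonal analysis.
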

We will prove Lemma \ref{reduction} in different cases. 
\begin{Lemma}\label{46}
	The bound \eqref{final} holds provided that
	\begin{align}
		k_3 \geq \frac{1+53p_0}{7}m \ \ \ or \ \ \ k_1 \leq -4m.
	\end{align}
\end{Lemma}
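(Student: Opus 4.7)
The plan is to reduce each case to a direct trilinear size estimate after first verifying that the correction term vanishes identically.

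First, I observe that under either hypothesis the correction $c_0 c_* |\xi|^{2-\alpha}\hat f_{k_1}(\xi)\hat f_{k_2}(\xi)\hat{\bar f}_{k_3}(-\xi)/(s+1)$ is zero. Indeed, $|\xi|\in[2^k,2^{k+1}]$ with $|k|\leq 5p_0 m$ and $p_0\leq 1/1000$, so $k_3\geq(1+53p_0)m/7$ puts $-\xi$ outside the support of $\varphi_{k_3}$, while $k_1\leq -4m$ puts $\xi$ outside the support of $\varphi_{k_1}$. Hence in either case only the dyadic sum of $|\int_{t_1}^{t_2} I_{k_1,k_2,k_3}(\xi,s)\,ds|$ needs to be controlled.

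The basic tool is the trilinear estimate obtained by rewriting $I_{k_1,k_2,k_3}$ as a double convolution (substitute $\tau=\eta-\sigma$) and applying Cauchy--Schwarz in $\sigma$:
\begin{equation*}
|I_{k_1,k_2,k_3}(\xi,s)| \lesssim \|\hat f_{k_2}\|_{L^1_\eta}\,\|\hat f_{k_1}\|_{L^2}\,\|\hat{\bar f}_{k_3}\|_{L^2},
\end{equation*}
uniformly in $\xi$. I combine this with two competing dyadic bounds: the Sobolev bound $\|\hat f_l\|_{L^2}\lesssim \varepsilon_1 2^{p_0 m} 2^{-N_0 l^+}$ from \eqref{a11}, and the Bernstein-type bounds $\|\hat f_l\|_{L^1}\lesssim \varepsilon_1 2^{l}2^{-10 l^+}$ and $\|\hat f_l\|_{L^2}\lesssim \varepsilon_1 2^{l/2}2^{-10 l^+}$ derived from \eqref{a13}. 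In Case~(a), I apply the Sobolev bound to the highest factor $\hat f_{k_3}$ and the Bernstein bounds to $\hat f_{k_1}$ and $\hat f_{k_2}$. Since $\sum_{k_1}2^{k_1/2}2^{-10 k_1^+}\lesssim 1$ and $\sum_{k_2}2^{k_2}2^{-10 k_2^+}\lesssim 1$, summation over $k_1,k_2$ costs only $O(1)$, and multiplying by the time factor $\sim 2^m$ leaves
\begin{equation*}
\varepsilon_1^3\cdot 2^{(1+p_0)m}\sum_{k_3\geq (1+53p_0)m/7} 2^{-N_0 k_3}\lesssim \varepsilon_1^3\cdot 2^{m[1+p_0-N_0(1+53p_0)/7]}.
\end{equation*}
With $N_0=100$ and $p_0\leq 1/1000$, the bracketed exponent is well below $-51 p_0$, comfortably beating the target $2^{-p_0 m}2^{-10 k^+}$ since $k^+\leq 5p_0 m$. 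In Case~(b), I instead use the Bernstein $L^2$ bound on the lowest factor, $\|\hat f_{k_1}\|_{L^2}\lesssim \varepsilon_1 2^{k_1/2}$, so that $\sum_{k_1\leq -4m}2^{k_1/2}\lesssim 2^{-2m}$; the analogous convergent sums in $k_2,k_3$ together with the time factor $2^m$ give a net bound $\varepsilon_1^3 2^{-m}$, once again much stronger than $\varepsilon_1^3 2^{-p_0 m-10 k^+}$.

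The main obstacle is the dyadic book-keeping: one must pair the Sobolev bound (needed where the Fourier support is extremal in one direction) against the pointwise/Bernstein bounds (needed to ensure summability in the other two frequencies at both ends of the Littlewood--Paley spectrum) so that enough decay is extracted from the extreme-frequency hypothesis to absorb the $2^m$ coming from time integration.
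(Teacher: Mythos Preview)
Your proposal is correct and follows essentially the same strategy as the paper: a direct trilinear H\"older/Cauchy--Schwarz bound on $I_{k_1,k_2,k_3}$, exploiting the extreme-frequency hypothesis to absorb the time factor $2^m$. Your observation that the correction term actually vanishes (rather than merely being small) is cleaner than the paper's treatment, and you handle the summation over the remaining dyadic indices more explicitly than the paper does; otherwise the arguments coincide.
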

\begin{proof}
	Using the fact that $k \in [-5p_0m,5p_0m]$, we have when $k_3 \geq \frac{1+53p_0}{7}m$
	\begin{align*}
		||I_{k_1,k_2,k_3}(s)||_{L^\infty}
		& \leq ||\hat{f}_{k_1}(s)||_{L^\infty}||\hat{f}_{k_2}(s)||_{L^2}
		||\hat{f}_{k_3}(s)||_{L^2}
		\\
		&\lesssim
		\varepsilon_1^3 2^{-10k_3^+}2^{2p_0m}2^{-N_0(k_1^+ + k^+_2)}
		\\
		& \lesssim \varepsilon^3_1 2^{-m}2^{-p_0m}2^{-10k^+}.
	\end{align*}
	If $k_1 \leq -4m$, we get
	\begin{align*}
		||I_{k_1,k_2,k_3}(s)||_{L^\infty}
		& \leq 2^{\frac{k_1}{2}}||\hat{f}_{k_3}(s)||_{L^2}||\hat{f}_{k_1}(s)||_{L^\infty}||\hat{f}_{k_2}(s)||_{L^\infty}
		\\
		&\lesssim
		 2^{-2m}\cdot \varepsilon_12^{p_0m}2^{-N_0k_3^+}\cdot \varepsilon_12^{-10k_1^+}\cdot\varepsilon_12^{-10k_2^+}
		\\
		& \lesssim \varepsilon^3_1 2^{-m}2^{-p_0m}2^{-10k^+},
	\end{align*}
	where we use \eqref{a11} and \eqref{a13}.

	Moreover, using (4.9), we have
	\begin{align*}
	&	\left|\frac{|\xi|^{2-\alpha}\hat{f}_{k_1}(\xi,s)\hat{f}_{k_2}(\xi,s)\hat{\bar{f}}_{k_3}(-\xi,s)}{s+1} \right|
	\\
		& \lesssim 2^{k(2-\alpha)}2^{-m}\left[ \varepsilon_1 (1+2^{10 k})\right]^{-3} \cdot \mathbf{1}_{[0,4]}\left( \max(|k_1-k|, |k_1-k|,|k_1-k|)\right)
		\\
		&\lesssim \varepsilon_1^32^{-m}2^{-p_0m}2^{-10k^+}.
	\end{align*}
\end{proof}
\begin{Lemma}
	The bound \eqref{final} holds provided that
	\begin{align}
		& |k| \leq 5p_0m,
		\\
		\nonumber
		&k_1,k_2,k_3 \in [-4m, \frac{53p_0+1}{7}m],
		\nonumber
		\\
		& \max \{|k_1-k|,|k_2-k|,|k_3-k|\} \leq 16.
	\end{align}
\end{Lemma}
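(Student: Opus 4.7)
The plan is to compare $I_{k_1,k_2,k_3}(\xi,s)$ with its stationary-phase leading order and verify that this leading order is exactly the subtracted renormalization. The only genuine space-time resonance of the phase $\Psi(\xi,\eta,\sigma)$ lies at $(\eta,\sigma)=(0,-\xi)$: there $\partial_\eta\Psi=\partial_\sigma\Psi=0$, and a direct computation gives Hessian entries $\partial_\eta^2\Psi=2\alpha(1-\alpha)|\xi|^{\alpha-2}$, $\partial_\sigma^2\Psi=0$, $\partial_\eta\partial_\sigma\Psi=-\alpha(1-\alpha)|\xi|^{\alpha-2}$, and hence $|\det\mathrm{Hess}|=[\alpha(1-\alpha)]^2|\xi|^{2(\alpha-2)}$. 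The two-dimensional stationary-phase formula therefore produces $(2\pi/s)|\det\mathrm{Hess}|^{-1/2}=c_0|\xi|^{2-\alpha}/s$ with $c_0=2\pi/[\alpha(1-\alpha)]$, multiplied by the amplitudes at $(0,-\xi)$, which are exactly $\hat f_{k_1}(\xi)\hat f_{k_2}(\xi)\hat{\bar f}_{k_3}(-\xi)$. This algebraic identification explains the exact form of the subtracted term in \eqref{final} and is the heart of the argument. The point $(\eta,\sigma)=(0,\xi)$ is only a time-only resonance (there $\partial_\sigma\Psi=0$ but $\partial_\eta\Psi\neq0$), and its contribution must be controlled separately but decays strictly faster.

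I would make the heuristic rigorous through a dyadic decomposition about $(0,-\xi)$, inserting cutoffs $\varphi_{l_1}(\eta)\varphi_{l_2}(\sigma+\xi)$ and letting $l_*$ denote the natural stationary-phase scale $2^{2l_*}\sim s^{-1}2^{k(2-\alpha)}$. For blocks with $\max(l_1,l_2)\gg l_*$, the gradient estimates $|\partial_\eta\Psi|\gtrsim 2^{k(\alpha-2)}\max(2^{l_1},2^{l_2})$ and $|\partial_\sigma\Psi|\gtrsim 2^{k(\alpha-2)}2^{l_1}$ (coming from the Taylor expansion of $\nabla\Psi$ at the critical point) allow a single integration by parts in the appropriate variable; applying Lemma \ref{m} together with the bounds \eqref{a11}--\eqref{a13} then shows that each such block contributes at most $\varepsilon_1^3 s^{-1-p_0}2^{-10k^+}$ up to a summable logarithmic factor in $l_1,l_2$. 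For the resonant core $\max(l_1,l_2)\lesssim l_*$, I would Taylor-expand each amplitude around $\xi$: the zeroth-order term, combined with the exactly quadratic phase integral, reproduces the subtracted quantity and therefore cancels, while the first-order Taylor remainders acquire an extra $O(2^{l_*})$ factor that, paired with \eqref{a12}--\eqref{a13} and the $s^{-1}$ from the Gaussian integration, yields the required $s^{-1-p_0}$ pointwise decay. The parallel analysis near the time-only resonance $(0,\xi)$—integrate by parts once in $\eta$ where $\partial_\eta\Psi\neq 0$, then perform one-dimensional stationary phase in $\sigma$—gives $s^{-3/2}$ pointwise and is absorbed without difficulty. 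Integrating over $s\in[t_1,t_2]\subset[2^m-2,2^{m+1}]$ then delivers \eqref{final}.

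The main obstacle is ensuring that the algebraic cancellation is \emph{exact}: any error in the constant $c_0$ or in the identification of the amplitudes at $(0,-\xi)$ would leave a pointwise $s^{-1}$ contribution whose time integral over $[2^m,2^{m+1}]$ is of order $1$, which cannot be absorbed into the $2^{-p_0 m}$ factor on the right-hand side of \eqref{final}. A secondary difficulty is tracking the polynomial loss $|\xi|^{2-\alpha}\lesssim 2^{5p_0(2-\alpha)m}$ appearing both in the renormalization and in several error terms; this must be absorbed against the available $s^{-p_0}$ surplus, and it is precisely here that the constraint $|k|\leq 5p_0 m$ is essential. The oscillating weight $e^{iH(\xi,s)}$ depends only on $\xi$, so it is inert for the $(\eta,\sigma)$ integration; whenever an integration by parts in $s$ is needed (e.g.\ at the boundaries of the $s$-interval or when converting between $I_{k_1,k_2,k_3}$ and $\partial_s\hat f$), Lemma \ref{partialm} controls $\partial_s\hat f_l$, while $|\partial_s H(\xi,s)|\lesssim\varepsilon_1^2 2^{k(2-\alpha)}/(s+1)$ is harmless against the estimates at hand.
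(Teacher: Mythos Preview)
Your proposal is correct and follows essentially the same route as the paper: after a change of variables placing the space-time resonance at the origin (equivalent to your cutoffs $\varphi_{l_1}(\eta)\varphi_{l_2}(\sigma+\xi)$), the paper dyadically decomposes in $(\eta,\sigma)$, integrates by parts (twice) in the off-core blocks using exactly the gradient bounds you state, and on the core block Taylor-expands both the phase (to the mixed quadratic term $\alpha(\alpha-1)\eta\sigma/|\xi|^{2-\alpha}$) and the amplitudes, extracting the constant $c_0$ via an explicit Gaussian integral that reproduces your Hessian computation. The only cosmetic difference is that the paper does not isolate the time-only resonance at $(0,\xi)$ for separate treatment, folding it instead into the generic off-core case; your observation that it contributes at order $s^{-3/2}$ is a slight refinement.
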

\begin{proof}
	Indeed we will prove for any $s \in[t_1,t_2]$,
	\begin{equation}\label{Iprrof}
		\left|I_{k_1,k_2,k_3}(\xi,s)-c_0\frac{|\xi|^{2-\alpha}\hat{f}_{k_1}(\xi,s)\hat{f}_{k_2}(\xi,s)\hat{\bar{f}}_{k_3}(-\xi,s)}{s+1} \right| \lesssim \varepsilon^3_12^{-m}2^{-p_0m}2^{-10k_+},
	\end{equation}
	which implies the desired bound. Changing the variables gives
	\begin{equation*}
		I_{k_1,k_2,k_3}(\xi,s)=\int_{\mathbb{R} \times \mathbb{R}}e^{is\Phi(\xi,\eta,\sigma)}\hat{f}_{k_1}(\xi+\eta,s)\hat{f}_{k_2}(\xi+\sigma,s)\hat{\bar{f}}_{k_3}(-\xi-\eta-\sigma,s)d\eta d\sigma,
	\end{equation*}
	where $\Phi(\xi,\eta,\sigma)=|\xi|^\alpha-|\xi+\eta|^\alpha-|\xi+\sigma|^\alpha+|\xi+\eta+\sigma|^\alpha$.
	Let $\bar{l}$ denote the smallest integer with the property that
	\begin{equation}\label{lbar}
		2^{2\bar{l}} \geq 2^{k(2-\alpha)}s^{-1}\geq 2^{k(2-\alpha)}2^{-(m+1)}.
	\end{equation}

	And we have
	\begin{equation}\label{decom}
		\left|I_{k_1,k_2,k_3}(\xi,s)\right|\leq \sum^{k+16}_{l_1,l_2=\bar{l}} \left|J_{l_1,l_2}(\xi,s)\right|,
	\end{equation}
	where
	\begin{equation}\label{Jl1l2}
		J_{l_1,l_2}(\xi,s)=\int_{\mathbb{R} \times \mathbb{R}}e^{is\Phi(\xi,\eta,\sigma)}\hat{f}_{k_1}(\xi+\eta,s)\hat{f}_{k_2}(\xi+\sigma,s)\hat{\bar{f}}_{k_3}(-\xi-\eta-\sigma,s)\varphi^{(\bar{l})}_{l_1}(\eta)\varphi^{(\bar{l})}_{l_2}(\sigma)d\eta d\sigma
	\end{equation}
	\textit{Case 1}: $l_1 \geq \max\{l_2,\bar{l}+1\}$.

	It is sufficient to show
	\begin{equation*}
		\left|J_{l_1,l_2}(\xi,s)\right| \lesssim 2^{-m}\varepsilon_1^3 2^{-3p_0m}2^{-10k^+}.
	\end{equation*}

	To achieve this goal, we will integrate by parts with respect to $\eta$ in the formula \eqref{Jl1l2}. Note that
	\begin{align}\label{region}
		\left|\partial_\eta \Phi(\xi,\eta,\sigma)\right|&=\alpha \left| sgn(\xi+\eta+\sigma)\right| \left|\xi+\eta+\sigma \right|^{\alpha-1}-|\xi+\eta|^{\alpha-1}\left|sgn(\xi+\eta)\right|
		\\
		& \gtrsim 2^{l_2}2^{k(\alpha-2)},
	\end{align}
	where $|\xi+\eta|\approx 2^k$, $|\xi+\eta+\sigma| \approx 2^k$, $|\sigma| \approx 2^{l_2}$.

	Then we have
	\begin{equation*}
		\left|J_{l_1,l_2}(\xi,s) \right| \leq \left|J_{l_1,l_2,1}(\xi,s)\right|+\left|F_{l_1,l_2,1}(\xi,s)\right|+\left|G_{l_1,l_2,1}(\xi,s)\right|,
	\end{equation*}
	where
	\begin{align}
		&J_{l_1,l_2,1}(\xi,s)=\int_{\mathbb{R}^2}e^{is\Phi(\xi,\eta,\sigma)}\hat{f}_{k_1}(\xi+\eta,s)\hat{f}_{k_2}(\xi+\sigma,s)\hat{\bar{f}}_{k_3}(-\xi-\eta-\sigma,s)
		(\partial_\eta m_1)(\eta,\sigma)d\eta d\sigma
		\nonumber
		\\
		& F_{l_1,l_2,1}(\xi,s)=\int_{\mathbb{R}^2}e^{is\Phi(\xi,\eta,\sigma)}\partial\hat{f}_{k_1}(\xi+\eta,s)\hat{f}_{k_2}(\xi+\sigma,s)\hat{\bar{f}}_{k_3}(-\xi-\eta-\sigma,s) m_1(\eta,\sigma)d\eta d\sigma
		\\
		& G_{l_1,l_2,1}(\xi,s)=\int_{\mathbb{R}^2}e^{is\Phi(\xi,\eta,\sigma)}\hat{f}_{k_1}(\xi+\eta,s)\hat{f}_{k_2}(\xi+\sigma,s)\partial\hat{\bar{f}}_{k_3}(-\xi-\eta-\sigma,s) m_1(\eta,\sigma)d\eta d\sigma
		\nonumber
	\end{align}
	and
	\begin{equation*}
		m_1(\eta,\sigma)=\frac{\varphi^{(\bar{l})}_{l_1}(\eta)\varphi_{l_2}(\sigma)}{s\partial_\eta\Phi(\xi,\eta,\sigma)}\varphi_{[k_1-2,k_1+2]}(\xi+\eta)\varphi_{[k_3-2,k_3+2]}(\xi+\eta+\sigma).
	\end{equation*}

	To estimate $F_{l_1,l_2,1}(\xi,s)$, we first write
	\begin{equation*}
		\hat{b}(\eta)=e^{-is|\xi+\eta|^\alpha}\partial\hat{f}_{k_1}(\xi+\eta,s),
	\end{equation*}
	\begin{equation*}
		\hat{g}(\sigma)=e^{-is|\xi+\sigma|^\alpha}\hat{f}_{k_2}(\xi+\sigma,s) \varphi(\frac{\sigma}{2^{l_2+4}}),
	\end{equation*}
	\begin{equation*}
		\hat{h}(-\eta-\sigma)=e^{is|\xi+\eta+\sigma|^\alpha}\hat{\bar{f}}_{k_3}(-\xi-\eta-\sigma,s) \varphi(\frac{-\eta-\sigma}{2^{l_2+4}}).
	\end{equation*}
	
	Use \eqref{a11}, \eqref{440},\eqref{441}, we have
	\begin{align*}
		&||b||_{L^2} \lesssim \varepsilon_12^{-k}2^{p_0m},
		\\
		& ||g||_{L^\infty} \lesssim \varepsilon_1 2^{-\frac{m}{2}},
		\\
		& ||h||_{L^2} \lesssim \varepsilon_12^{\frac{l_2}{2}}2^{-10k^+}.
	\end{align*}

	It is easy to verify, comparing with \eqref{region}, that $m_1$ satisfies the stronger estimate
	\begin{equation}\label{region1}
		\left|\partial_\eta m_1(\eta,\sigma)\right|\lesssim (2^{-m}2^{-l_2}2^{k(2-\alpha)})2^{-l_1}1_{[0,2^{l_1+4}]}(|\eta|)1_{[2^{l_2-4},2^{l_2+4}]}(|\sigma|).
	\end{equation}
	Therefore, we get
	\begin{equation*}
		||\mathcal{F}^{-1}m_1||_{L^1} \lesssim 2^{-m}2^{-l_2}2^{k(2-\alpha)}.
	\end{equation*}

	Note that since $2^{-\frac{l_2}{2}} \lesssim 2^{k(\frac{1}{2}-\frac{\alpha}{4})}2^{-\frac{m}{4}}$ and $|k|\leq 5p_0m$, we have
	\begin{align*}
		\left|F_{l_1,l_2,1}(\xi,s)\right|
		&\lesssim ||b||_{L^2}||g||_{L^\infty}||h||_{L^2}||\mathcal{F}^{-1}m_1||_{L^1}
		\\
		&\lesssim \varepsilon_1 2^{-k}2^{p_0m}\cdot \varepsilon_12^{-\frac{m}{2}}\cdot \varepsilon_12^{\frac{l_2}{2}}2^{-10k^+} \cdot 2^{-m}2^{-l_2}2^{k(2-\alpha)}
		\\
		& \lesssim \varepsilon^3_1 2^{-m} 2^{-p_0m}2^{-10k^+}.
	\end{align*}
	Similarly, the following holds
	\begin{equation*}
		\left|G_{l_1,l_2,1}(\xi,s)\right| \lesssim \varepsilon^3_1 2^{-m} 2^{-p_0m}2^{-10k^+}.
	\end{equation*}

	As for $J_{l_1,l_2,1}(\xi,s)$,  integrate by parts again with respect to $\eta$,
	\begin{equation*}
		\left|J_{l_1,l_2,1}(\xi,s)\right| \leq \left|J_{l_1,l_2,2}(\xi,s)\right|+\left|F_{l_1,l_2,2}(\xi,s)\right|+\left|G_{l_1,l_2,2}(\xi,s)\right|,
	\end{equation*}
	where
	\begin{align}
		&J_{l_1,l_2,2}(\xi,s)=\int_{\mathbb{R}^2}e^{is\Phi(\xi,\eta,\sigma)}\hat{f}_{k_1}(\xi+\eta,s)\hat{f}_{k_2}(\xi+\sigma,s)\hat{\bar{f}}_{k_3}(-\xi-\eta-\sigma,s)
		(\partial_\eta m_2)(\eta,\sigma)d\eta d\sigma
		\nonumber
		\\
		& F_{l_1,l_2,2}(\xi,s)=\int_{\mathbb{R}^2}e^{is\Phi(\xi,\eta,\sigma)}\partial\hat{f}_{k_1}(\xi+\eta,s)\hat{f}_{k_2}(\xi+\sigma,s)\hat{\bar{f}}_{k_3}(-\xi-\eta-\sigma,s) m_2(\eta,\sigma)d\eta d\sigma
		\\
		& G_{l_1,l_2,2}(\xi,s)=\int_{\mathbb{R}^2}e^{is\Phi(\xi,\eta,\sigma)}\hat{f}_{k_1}(\xi+\eta,s)\hat{f}_{k_2}(\xi+\sigma,s)\partial\hat{\bar{f}}_{k_3}(-\xi-\eta-\sigma,s) m_2(\eta,\sigma)d\eta d\sigma
		\nonumber
	\end{align}
	and
	\begin{equation*}
		m_2(\eta,\sigma)=\frac{(\partial_\eta m_1)(\eta,\sigma)}{s(\partial_\eta \Phi)(\eta,\sigma)}.
	\end{equation*}

	Compared with $m_1$, $m_2$ satisfies the stronger bounds
	\begin{equation}\label{region3}
		\left|m_2(\eta,\sigma)\right|\lesssim (2^{-m}2^{-l_1-l_2}2^{k(2-\alpha)})(2^{-m}2^{-l_2}2^{k(2-\alpha)})1_{[0,2^{l_1+4}]}(|\eta|)1_{[2^{l_2-4},2^{l_2+4}]}(|\sigma|).
	\end{equation}
	\begin{equation}\label{region2}
		\left|\partial_\eta m_2(\eta,\sigma)\right|\lesssim (2^{-m}2^{-l_1-l_2}2^{k(2-\alpha)})(2^{-m}2^{-l_2}2^{k(2-\alpha)})2^{-l_1}1_{[0,2^{l_1+4}]}(|\eta|)1_{[2^{l_2-4},2^{l_2+4}]}(|\sigma|).
	\end{equation}

	Considering $|k_1-k_3| \leqslant |k_1-k|+|k_3-k| \leqslant 8 $, we get
	
	\begin{align*}
		\left|F_{l_1,l_2,2}(\xi,s)\right|
		+\left|G_{l_1,l_2,2}(\xi,s)\right| 
		&\lesssim ||b||_{L^2}||g||_{L^\infty}||h||_{L^2}||\mathcal{F}^{-1}m_2||_{L^1}
		\\
		&\lesssim \varepsilon^3_1 2^{-m} 2^{-p_0m}2^{-10k^+}.
	\end{align*}

	Noticing the estimate \eqref{region2}, we could estimate $J_{l_1,l_2,2}(\xi,s)$ by
	\begin{align*}
		\left|J_{l_1,l_2,2}(\xi,s)\right| & \lesssim
		2^{l_1}2^{l_2}||\hat{f}_{k_1}||_{L^\infty}||\hat{f}_{k_2}||_{L^\infty}||\hat{f}_{k_3}||_{L^\infty}
		||\partial_\eta m_2||_{L^\infty}
		\\
		&\lesssim \varepsilon^3_1 2^{l_1+l_2}2^{-30k^+}(2^{-m}2^{-l_1-l_2}2^{k(2-\alpha)})^2
		\\
		&\lesssim \varepsilon^3_1 2^{-m} 2^{-p_0m}2^{-10k^+}.
	\end{align*}
	\textit{Case 2}: $l_2 \geq \max\{l_1,\bar{l}+1\}$.

	We could get the desire estimates using the same methods as in Case 1.  Therefore, we have
	\begin{equation*}
		\left|J_{l_1,l_2}(\xi,s)\right| \lesssim 2^{-m}\varepsilon_1^3 2^{-3p_0m}2^{-10k^+}.
	\end{equation*}
	\textit{Case 3}: $l_1=l_2=\bar{l}$.

	Using the decomposition \eqref{decom},  it suffices to prove that
	\begin{equation}\label{Jlbar}
		\left|J_{\bar{l},\bar{l}}(\xi,s)-c_0\frac{|\xi|^{2-\alpha}\hat{f}_{k_1}(\xi,s)\hat{f}_{k_2}(\xi,s)\hat{\bar{f}}_{k_3}(-\xi,s)}{s+1}\right|
		\lesssim \varepsilon^3_1 2^{-m}2^{-p_0m}2^{-10k_+}.
	\end{equation}
	By Taylor expansion, we get
	\begin{equation*}
		\left|\Phi(\xi,\eta,\sigma)-\alpha(\alpha-1)\frac{\eta \sigma}{|\xi|^{2-\alpha}}\right| \lesssim 2^{k(\alpha-3)} \left(|\eta|+|\sigma|\right)^3.
	\end{equation*}
	Therefore, by the $L^\infty$ bounds in \eqref{a13}, we have
	\begin{align}\label{J-J'}
		\left|J_{\bar{l},\bar{l}}(\xi,s)-J'_{\bar{l},\bar{l}}(\xi,s)\right|
		&\lesssim ||e^{\textit{i}s\Phi(\xi,\eta,\sigma)}-e^{
			\frac{\textit{i}\alpha(\alpha-1)s\eta\sigma}
			{|\xi|^{2-\alpha}}}||_{L^\infty}||\hat{f}_{k_1}||_{L^\infty}||\hat{f}_{k_2}||_{L^\infty}||\hat{f}_{k_3}||_{L^\infty}2^{2\bar{l}}
		\nonumber
		\\
		&\lesssim
		\varepsilon^3_1 2^m 2^{k(\alpha-3)}2^{5\bar{l}}2^{-30k^+},
	\end{align}
	where
	\begin{equation}
		J'_{\bar{l},\bar{l}}(\xi,s)=\int_{\mathbb{R}^2}e^{\frac{\textit{i}\alpha(\alpha-1)s\eta\sigma}{|\xi|^{2-\alpha}}}\hat{f}_{k_1}(\xi+\eta,s)\hat{f}_{k_2}(\xi+\sigma,s)\hat{\bar{f}}_{k_3}(-\xi-\eta-\sigma,s)\varphi(2^{-\bar{l}}\eta)\varphi(2^{-\bar{l}}\sigma)d\eta d\sigma,
	\end{equation}
	and
	\begin{align*}
		& \left|\hat{f}_{k_1}(\xi+\eta,s)\hat{f}_{k_2}(\xi+\sigma,s)\hat{\bar{f}}_{k_3}(-\xi-\eta-\sigma,s)-\hat{f}_{k_1}(\xi,s)\hat{f}_{k_2}(\xi,s)\hat{\bar{f}}_{k_3}(-\xi,s)\right|
		\\
		& \lesssim \varepsilon^3_1 2^{\frac{\bar{l}}{2}}2^{-20k_+}2^{p_0m}2^{-k},
	\end{align*}
	provided  $|\eta|+|\sigma| \leq 2^{\bar{l}+4}$.

	Therefore, we have
	\begin{align}\label{211}
		&\left|J'_{\bar{l},\bar{l}}(\xi,s)-\int_{\mathbb{R}^2}e^{\frac{\textit{i}\alpha(\alpha-1)s\eta\sigma}{|\xi|^{2-\alpha}}}\hat{f}_{k_1}(\xi,s)\hat{f}_{k_2}(\xi,s)\hat{\bar{f}}_{k_3}(-\xi,s)\varphi(2^{-\bar{l}}\eta)\varphi(2^{-\bar{l}}\sigma)d\eta d\sigma \right|
		\nonumber
		\\
		& \lesssim \varepsilon^3_1 2^{2\bar{l}} 2^{\frac{\bar{l}}{2}}\cdot 2^{-20k_+}2^{p_0m}2^{-k}
		\nonumber
		\\
		& \lesssim \varepsilon^3_1 2^{-1.1m}2^{-10k^+},
	\end{align}
	which follows the definition of $\bar{l}$ in \eqref{lbar}.

	On the other hand, one can show
	\begin{equation*}
		\int_{\mathbb{R}}e^{-ax^2-bx}dx=e^{\frac{b^2}{4a}}\frac{\sqrt{\pi}}{\sqrt{a}}, \ \ \ a,b \in \mathbb{C}, \ \text{Re} \ a>0
	\end{equation*}
	and
	\begin{align*}
		\int_{\mathbb{R}^2}e^{-ixy}e^{-\frac{x^2}{N^2}}e^{-\frac{y^2}{N^2}}dxdy&=\sqrt{\pi}N\int_{\mathbb{R}}e^{-\frac{y^2}{N^2}}e^{-\frac{y^2N^2}{4}}dy
		\\
		&=2\pi +O(N^{-1}).
	\end{align*}

	Recalling also that $2^{\bar{l}}\approx |\xi|^{1-\frac{\alpha}{2}}2^{-\frac{m+1}{2}}$, we get
	\begin{equation}\label{40000}
		\left|\int_{\mathbb{R}^2}e^{\frac{\textit{i}s\eta\sigma\alpha(\alpha-1)}{|\xi|^{2-\alpha}}}\varphi(2^{-\bar{l}}\eta)\varphi(2^{-\bar{l}}\sigma)d\eta d\sigma-2\pi\frac{1}{\alpha(1-\alpha)}\frac{|\xi|^{2-\alpha}}{s} \right|
		\lesssim 2^{k(2-\alpha)}2^{-m}2^{-p_0m}.
	\end{equation}

	Using (4.7), (4.8), (4.9) and \eqref{40000}, it's easy  to obtain
	\begin{align}\label{200}
		&|\int_{\mathbb{R}^2}e^{\frac{\textit{i}s\eta\sigma\alpha(\alpha-1)}{|\xi|^{2-\alpha}}}\hat{f}_{k_1}(\xi,s)\hat{f}_{k_2}(\xi,s)\hat{\bar{f}}_{k_3}(-\xi,s)\varphi(2^{-\bar{l}}\eta)\varphi(2^{-\bar{l}}\sigma)d\eta d\sigma
		\\ \nonumber
		&
		\quad-c_0\frac{|\xi|^{2-\alpha}\hat{f}_{k_1}(\xi,s)\hat{f}_{k_2}(\xi,s)\hat{\bar{f}}_{k_3}(-\xi,s)}{s}|
		\nonumber
		\\
		& \lesssim \varepsilon^3_1 2^{-m}2^{-10k^+}2^{-p_0 m}.
	\end{align}
	
	Combing  \eqref{J-J'}, \eqref{211} and \eqref{200}, we have finished the proof of \eqref{Jlbar}.
\end{proof}
\begin{Lemma}
	The bound \eqref{final} holds provided that
	\begin{align}
		&k_1,k_2,k_3 \in \left[-\frac{3}{8\alpha}m, \frac{53p_0+1}{7}m \right]
		\nonumber
		\\
		& \max \{|k_1-k|,|k_2-k|,|k_3-k| \} \geq 16,
		\nonumber
		\\
		&k_3-k_1 \geq 5.
	\end{align}
\end{Lemma}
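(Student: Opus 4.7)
\quad The argument has two ingredients: first, observe that the correction term in \eqref{final} vanishes identically under $k_3-k_1\geq 5$, so only the pure trilinear integral remains; second, estimate what is left by integration by parts in a phase variable where the gradient of $\Psi$ is bounded below, which is possible precisely because the frequency separation keeps us off the space--time resonance.

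Since $\hat{f}_{k_1}(\xi,s)$ is supported in $|\xi|\in[2^{k_1-1},2^{k_1+1}]$ while $\hat{\bar{f}}_{k_3}(-\xi,s)$ requires $|\xi|\in[2^{k_3-1},2^{k_3+1}]$, the product $\hat{f}_{k_1}(\xi,s)\hat{\bar{f}}_{k_3}(-\xi,s)$ is identically zero as soon as $k_3-k_1\geq 5$. Hence the subtracted term contributes nothing, and it suffices to establish the pointwise bound
\begin{equation*}
|I_{k_1,k_2,k_3}(\xi,s)| \lesssim \varepsilon_1^3\,2^{-m}2^{-2p_0 m}2^{-10 k^+}, \qquad s\in[t_1,t_2],
\end{equation*}
for each admissible triple, from which integration in $s$ over $[t_1,t_2]$ (of length $\lesssim 2^m$) and summation over the $O(m^3)$ relevant frequency configurations recover \eqref{final}.

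Under $k_1\leq k_2\leq k_3$ and $k_3-k_1\geq 5$, at least one of $k_1\leq k_2-3$ or $k_2\leq k_3-3$ holds; by symmetry I treat the former. On the support of the integrand in \eqref{I_k} one has $|\xi-\eta|\approx 2^{k_1}$ and $|\eta-\sigma|\approx 2^{k_2}$, and because $\alpha-1<0$ the identity
\begin{equation*}
\partial_\eta \Psi = \alpha\bigl(\sgn(\xi-\eta)|\xi-\eta|^{\alpha-1} - \sgn(\eta-\sigma)|\eta-\sigma|^{\alpha-1}\bigr)
\end{equation*}
gives $|\partial_\eta \Psi|\gtrsim 2^{k_1(\alpha-1)}$. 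I then integrate $I_{k_1,k_2,k_3}$ by parts twice in $\eta$ against $e^{is\Psi}$, mirroring the $J_{l_1,l_2,1}, J_{l_1,l_2,2}$ construction in the proof of the previous lemma: each step yields a symbol $m_j$ with $\|\mathcal{F}^{-1}m_j\|_{L^1}\lesssim (s^{-1}2^{k_1(1-\alpha)})^{j}$ along with three pieces according to whether the $\eta$-derivative lands on the symbol, on $\hat{f}_{k_1}(\xi-\eta)$, or on $\hat{f}_{k_2}(\eta-\sigma)$. Each piece is then controlled via Lemma \ref{m}, placing the highest-frequency factor in $L^2$ and the remaining two in $L^\infty$, with the help of \eqref{a11}--\eqref{a13}, \eqref{440}--\eqref{441}, and Lemma \ref{partialm} for the derivative terms. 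The complementary subcase $k_1=k_2<k_3$ is handled symmetrically, integrating in $\sigma$ with $|\partial_\sigma \Psi|\gtrsim 2^{k_2(\alpha-1)}$.

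The central obstacle is the exponent balance. Two IBPs produce the factor $2^{-2m}2^{2k_1(1-\alpha)}$, which must dominate the $2^{p_0 m}$ loss carried by the Sobolev bounds \eqref{a11}--\eqref{a12}. The hypothesis $k_1\geq -\frac{3}{8\alpha}m$ is precisely what prevents $2^{k_1(1-\alpha)}$ from blowing up faster than a small positive power of $2^m$ can absorb, while the upper bound $k_i\leq \frac{53p_0+1}{7}m$ combined with the target factor $2^{-10 k^+}$ keeps high-frequency losses harmless. A clean execution will likely require splitting into subcases based on the relative sizes of $k_1,k_2,k_3$ and choosing, in each, the IBP variable and the $L^2/L^\infty$ placement of the three factors so that the exponent in $2^m$ stays below $-(1+p_0)$ after all bookkeeping.
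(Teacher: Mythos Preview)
Your plan is correct and follows essentially the same route as the paper: the correction term vanishes by support considerations, and the remaining trilinear integral is controlled by integrating by parts in a phase variable made nondegenerate by the gap $k_3-k_1\geq 5$. Two implementation details differ. First, the paper changes variables at the outset, writing
\[
I_{k_1,k_2,k_3}(\xi,s)=\int_{\mathbb{R}^2} e^{is\Phi(\xi,\eta,\sigma)}\hat f_{k_1}(\xi+\eta)\hat f_{k_2}(\xi+\sigma)\hat{\bar f}_{k_3}(-\xi-\eta-\sigma)\,d\eta\,d\sigma,
\]
so that $\partial_\eta\Phi$ compares $|\xi+\eta|\approx 2^{k_1}$ directly against $|\xi+\eta+\sigma|\approx 2^{k_3}$; this yields $|\partial_\eta\Phi|\gtrsim 2^{k_1(\alpha-1)}$ in one stroke and eliminates your $k_1\leq k_2-3$ versus $k_2\leq k_3-3$ dichotomy. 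Second, the paper performs a \emph{single} integration by parts in $\eta$, producing only $J_1,F_1,G_1$ with symbol $m_3$ satisfying $\|\mathcal F^{-1}m_3\|_{L^1}\lesssim 2^{-m}2^{k_1(1-\alpha)}$ and $\|\mathcal F^{-1}\partial_\eta m_3\|_{L^1}\lesssim 2^{-m}2^{-k_1\alpha}$; combined with one $L^\infty$ factor $\lesssim\varepsilon_1 2^{-m/2}$ from Lemma~\ref{disperlem} and two $L^2$ factors, the lower bound $k_1\geq -\tfrac{3}{8\alpha}m$ already closes the exponent (the worst case gives roughly $2^{-9m/8}$). Your proposed second IBP is harmless but not needed. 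Both variants reach the same pointwise bound $|I_{k_1,k_2,k_3}|\lesssim\varepsilon_1^3 2^{-m}2^{-p_0m}2^{-10k^+}$; the paper's execution is simply shorter.
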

\begin{proof}
	To achieve this goal, it suffices to prove that, for any $s\in [t_1,t_2]$,
	\begin{align}
		\left|I_{k_1,k_2,k_3}(s) \right|
		& \lesssim \varepsilon^3_1 2^{-p_0m}2^{-10k^+}2^{-m}.
	\end{align}

	Consider $k_3-k_1\geq 5$, and notice that
	\begin{align}\label{300}
		\left|\partial_\eta \Phi(\xi,\eta,\sigma)\right|&=\alpha sgn(\xi+\eta+\sigma)|\xi+\eta+\sigma|^{\alpha-1}-|\xi+\eta|^{\alpha-1}sgn(\xi+\eta)
		\nonumber
		\\
		& \gtrsim 2^{k_1(\alpha-1)},
	\end{align}
	provided that $|\xi+\eta|\approx 2^{k_1}$, $|\xi+\eta+\sigma| \approx 2^{k_3}$. We integrate by parts in $\eta$ to estimate
	\begin{equation*}
		|I_{k_1,k_2,k_3}(\xi,s)| \leq |J_{1}(\xi,s)|+|F_{1}(\xi,s)|+|G_{1}(\xi,s)|,
	\end{equation*}
	where
	\begin{align}
		&J_{1}(\xi,s)=\int_{\mathbb{R}^2}e^{is\Phi(\xi,\eta,\sigma)}\hat{f}_{k_1}(\xi+\eta,s)\hat{f}_{k_2}(\xi+\sigma,s)\hat{\bar{f}}_{k_3}(-\xi-\eta-\sigma,s)
		(\partial_\eta m_3)(\eta,\sigma)d\eta d\sigma
		\nonumber
		\\
		& F_{1}(\xi,s)=\int_{\mathbb{R}^2}e^{is\Phi(\xi,\eta,\sigma)}\partial\hat{f}_{k_1}(\xi+\eta,s)\hat{f}_{k_2}(\xi+\sigma,s)\hat{\bar{f}}_{k_3}(-\xi-\eta-\sigma,s) m_3(\eta,\sigma)d\eta d\sigma
		\\
		& G_{1}(\xi,s)=\int_{\mathbb{R}^2}e^{is\Phi(\xi,\eta,\sigma)}\hat{f}_{k_1}(\xi+\eta,s)\hat{f}_{k_2}(\xi+\sigma,s)\partial\hat{\bar{f}}_{k_3}(-\xi-\eta-\sigma,s) m_3(\eta,\sigma)d\eta d\sigma
		\nonumber
	\end{align}
	and
	\begin{equation*}
		m_3(\eta,\sigma)=\frac{1}{s\partial_\eta\Phi(\xi,\eta,\sigma)}\varphi_{[k_1-1,k_1+1]}(\xi+\eta)\varphi_{[k_3-1,k_3+1]}(\xi+\eta+\sigma).
	\end{equation*}

	Using \eqref{300}, it follows that
	\begin{equation*}
		||\mathcal{F}^{-1}m_3||_{L^1} \lesssim 2^{-m}2^{k_1(1-\alpha)},
	\end{equation*}
	\begin{equation*}
		||\mathcal{F}^{-1} \partial_\eta m_3||_{L^1} \lesssim 2^{-m}2^{-k_1\alpha}.
	\end{equation*}
	Therefore, we have
	\begin{align*}
		|J_{1}(\xi,s)| 
		&\lesssim  \varepsilon_1 2^{-\frac{m}{2}} \cdot \varepsilon_1 2^{p_0m}2^{-N_0k_3^+} \cdot \varepsilon_1 2^{p_0m}2^{-N_0k_2^+} \cdot 2^{-m}2^{-k_1\alpha}
		\\
		&\lesssim \varepsilon^3_1 2^{-\frac{m}{2}} 2^{2p_0m} 2^{-N_0k_3^+} 2^{-m}2^{-k_1\alpha}
		\\
		&\lesssim \varepsilon^3_1 2^{-m} 2^{10k^+} 2^{-p_0m}.
	\end{align*}
	\begin{align*}
		\left|F_{1}(\xi,s) \right|
		&\lesssim \varepsilon_1 2^{-k_1}  2^{p_0m} \cdot \varepsilon_1 2^{-\frac{m}{2}} \cdot \varepsilon_12^{-N_0k_3^+2^{p_0m}} \cdot 2^{-m}2^{k_1(1-\alpha)}
		\\
		&\lesssim \lesssim \varepsilon^3_1 2^{-m} 2^{10k^+} 2^{-p_0m},
	\end{align*}
	\begin{align*}
		|G_{1}(\xi,s)|
	&	\lesssim \varepsilon_1 2^{-\frac{m}{2}} \cdot \varepsilon_1 2^{p_0m}2^{-N_0\max{k_1^+,k_2^+}} \cdot \varepsilon_1 2^{-k_3}2^{p_0m} \cdot 2^{-m}2^{k_1(1-\alpha)}.
	\end{align*}

	As a result, we get
	\begin{equation*}
		|J_{2}(\xi,s)|+|F_{2}(\xi,s)|+|G_{2}(\xi,s)|
		\lesssim \varepsilon^3_1 2^{-m}2^{-p_0m}2^{-10k^+},
	\end{equation*}
	if $k_1,k_2,k_3 \in [-\frac{3}{8\alpha}m, \frac{53p_0+1}{7}m]$ and $|k| \leq 5 p_0 m$.
\end{proof}

\begin{Lemma}
	The bound \eqref{final} holds provided that
	\begin{align}
		&k_1,k_2,k_3 \in \left[-\frac{3}{8\alpha}m, \frac{53p_0+1}{7}m \right]
		\nonumber
		\\
		& \max \{|k_1-k|,|k_2-k|,|k_3-k|\} \geq 16,
		\nonumber
		\\
		&k_3-k_1 \leq 4.
	\end{align}
\end{Lemma}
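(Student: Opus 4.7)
The plan is to treat this case by nonstationary-phase integration by parts in either $\eta$ or $\sigma$, exploiting the fact that the assumptions $k_3-k_1\leq 4$ and $\max\{|k_i-k|\}\geq 16$ together force $|k_i-k|\geq 12$ for every $i$, so all three input frequencies lie at the common scale $2^{k_1}\approx 2^{k_2}\approx 2^{k_3}$ and are uniformly separated from the output scale $2^k$. In particular, the subtracted ``diagonal'' term $c_0c_*|\xi|^{2-\alpha}\hat{f}_{k_1}(\xi)\hat{f}_{k_2}(\xi)\hat{\bar{f}}_{k_3}(-\xi)/(s+1)$ appearing in \eqref{final} is identically zero in the present regime, because for $\xi\in[2^k,2^{k+1}]$ its inputs would require $|k_i-k|\leq 1$. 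The goal thus reduces to showing
\begin{equation*}
|I_{k_1,k_2,k_3}(\xi,s)|\lesssim \varepsilon_1^3\,2^{-m}\,2^{-p_0 m}\,2^{-10k^+},\qquad s\in[t_1,t_2].
\end{equation*}

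The crucial new phase estimate is obtained by applying the mean-value theorem to $y\mapsto\mathrm{sgn}(y)|y|^{\alpha-1}$ after restricting to each sign-sector of $(\xi+\eta,\xi+\sigma,\xi+\eta+\sigma)$, giving
\begin{equation*}
|\partial_\eta\Phi(\xi,\eta,\sigma)|\gtrsim |\sigma|\,2^{k_1(\alpha-2)},\qquad |\partial_\sigma\Phi(\xi,\eta,\sigma)|\gtrsim |\eta|\,2^{k_1(\alpha-2)}.
\end{equation*}
Combined with $|\xi+\eta|\approx|\xi+\sigma|\approx 2^{k_1}$, $|\xi|\approx 2^k$ and $|k_1-k|\geq 12$, a triangle-inequality argument gives $|\eta|\approx|\sigma|\approx 2^{\max(k,k_1)}$, so that effectively $|\partial_\eta\Phi|,\,|\partial_\sigma\Phi|\gtrsim 2^{\max(k,k_1)+k_1(\alpha-2)}$. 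This replaces the bound $|\partial_\eta\Phi|\gtrsim 2^{k_1(\alpha-1)}$ used in the preceding lemma, which had required the separation $k_3-k_1\geq 5$.

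With this lower bound in hand, I would run the proof in close analogy with the preceding lemma: integrate by parts twice in $\eta$, introducing the symbol
\begin{equation*}
m_3(\eta,\sigma)=\frac{1}{s\,\partial_\eta\Phi(\xi,\eta,\sigma)}\,\varphi_{[k_1-1,k_1+1]}(\xi+\eta)\,\varphi_{[k_3-1,k_3+1]}(\xi+\eta+\sigma),
\end{equation*}
for which the improved lower bound above yields $\|\mathcal{F}^{-1}m_3\|_{L^1}\lesssim 2^{-m}\,2^{k_1(2-\alpha)-\max(k,k_1)}$, and $\|\mathcal{F}^{-1}\partial_\eta m_3\|_{L^1}$ gains an additional factor $2^{-\max(k,k_1)}$. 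I would then apply Lemma \ref{m} to the three resulting trilinear forms, placing the factor carrying the derivative (or the highest Sobolev weight) in $L^2$ via \eqref{a11}--\eqref{a12}, a second factor in $L^\infty$ via the dispersive estimate \eqref{441}, and the third in $L^2$. Summing the resulting dyadic bound over $(k_1,k_2,k_3)$ constrained by $|k_3-k_1|\leq 4$ is a finite-geometric exercise.

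The main obstacle will be the careful bookkeeping of powers of $2^{k_1}$, $2^k$ and $2^m$ across the wide admissible ranges $k_1\in[-\tfrac{3}{8\alpha}m,\tfrac{53p_0+1}{7}m]$ and $k\in[-5p_0m,5p_0m]$. At the low end $k_1\approx-\tfrac{3}{8\alpha}m$ one must use the dispersive bound in the form $\|e^{is(-\Delta)^{\alpha/2}}f_{k_1}\|_{L^\infty}\lesssim \varepsilon_1\,2^{k_1}$ from \eqref{441} instead of $\varepsilon_1\,2^{-m/2}$, and the cutoff $-\tfrac{3}{8\alpha}m$ is precisely tuned so that the twofold IBP gain $s^{-2}\,2^{2k_1(2-\alpha)-2\max(k,k_1)}$, multiplied against the three $L^2$/$L^\infty$ norms of the $\hat{f}_{k_i}$, still delivers the $2^{-p_0m}$ surplus needed to close \eqref{final} uniformly in $\alpha\in(\tfrac13,1)$.
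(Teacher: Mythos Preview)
Your plan is correct in spirit and would close the estimate, but it is over-engineered because you miss one structural observation that the paper exploits at the outset. Since $k_1\leq k_2\leq k_3$ and $k_3-k_1\leq 4$, the three input frequencies sum to the output $\xi\in[2^k,2^{k+1}]$; if all three were $\leq 2^{k-8}$ in modulus the sum could not reach $2^k$, so the case $k_i\leq k-12$ is \emph{vacuous} and one may assume $k_1\geq k+10$ from the start. This kills your $\max(k,k_1)$ bookkeeping: one always has $\max(k,k_1)=k_1$, so your ``new'' phase bound $|\partial_\eta\Phi|\gtrsim 2^{\max(k,k_1)+k_1(\alpha-2)}$ collapses to $2^{k_1(\alpha-1)}$, which is exactly the bound of the preceding lemma, not a replacement for it. More importantly, it makes your entire discussion of the ``low end $k_1\approx-\tfrac{3}{8\alpha}m$'' moot: since $|k|\leq 5p_0 m$, one has $k_1\geq -5p_0 m+12$, far above $-\tfrac{3}{8\alpha}m$, and the dispersive bound $2^{-m/2}$ from \eqref{441} is always the relevant one.

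The paper proceeds exactly along your lines but more economically: it records $k_1\geq k+10$, inserts the free localization $\varphi_{[k_2-4,k_2+4]}(\sigma)$ (which follows from $|\xi+\sigma|\approx 2^{k_2}\gg|\xi|$), obtains $|\partial_\eta\Phi|\gtrsim 2^{k_2(\alpha-1)}$ directly, and integrates by parts \emph{once} in $\eta$. The three resulting pieces are then bounded via Lemma~\ref{m} with one factor in $L^\infty$ (dispersive decay $2^{-m/2}$), the derivative factor in $L^2$ via \eqref{a12}, and the remaining factor in $L^2$ via \eqref{a11}. A single integration by parts already yields the required surplus $2^{-p_0 m}$ because $k_1\geq -5p_0 m+12$ keeps $2^{-k_1\alpha}$ harmless; your second integration by parts is unnecessary.
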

\begin{proof}
	Assume that $k_1 \geq k+10$, and rewrite
	\begin{equation*}
		I_{k_1,k_2,k_3}(\xi,s)=\int_{\mathbb{R}^2 }e^{is\Phi(\xi,\eta,\sigma)}\hat{f}_{k_1}(\xi+\eta,s)\hat{f}_{k_2}(\xi+\sigma,s)\hat{\bar{f}}_{k_3}(-\xi-\eta-\sigma,s) \varphi_{[k_2-4,k_2-4]}(\sigma) d\eta d\sigma,
	\end{equation*}
	where $\Phi(\xi,\eta,\sigma)=|\xi|^\alpha-|\xi+\eta|^\alpha-|\xi+\sigma|^\alpha+|\xi+\eta+\sigma|^\alpha$.	
	We need prove 
	\begin{equation}
		|I_{k_1,k_2,k_3}(\xi,s)|\lesssim \varepsilon^3_1 2^{-m}2^{-p_0m}2^{-10k^+}.
	\end{equation}

	Consider $k_3\approx k_2 \approx k_1$, and notice that
	\begin{align}\label{300}
		|\partial_\eta \Phi(\xi,\eta,\sigma)|&=\alpha| sgn(\xi+\eta+\sigma)|\xi+\eta+\sigma|^{\alpha-1}-|\xi+\eta|^{\alpha-1}sgn(\xi+\eta)|
		\nonumber
		\\
		& \gtrsim 2^{k_2(\alpha-1)},
	\end{align}
	provided that $|\xi+\eta| \in [2^{k_1-2},2^{k_1+2}]$, $|\xi+\eta+\sigma| \in [2^{k_3-2},2^{k_3+2}]$, and $|\sigma| \in [2^{k_2-2},2^{k_2+2}]$.

	Integrating $I_{k_1,k_2,k_3}$ by parts in $\eta$ gives
	\begin{equation*}
		|I_{k_1,k_2,k_3}(\xi,s)| \leq |J_{2}(\xi,s)|+|F_{2}(\xi,s)|+|G_{2}(\xi,s)|,
	\end{equation*}
	where
	\begin{align}
		&J_{2}(\xi,s)=\int_{\mathbb{R}^2}e^{is\Phi(\xi,\eta,\sigma)}\hat{f}_{k_1}(\xi+\eta,s)\hat{f}_{k_2}(\xi+\sigma,s)\hat{\bar{f}}_{k_3}(-\xi-\eta-\sigma,s)
		(\partial_\eta m_4)(\eta,\sigma)d\eta d\sigma
		\nonumber
		\\
		& F_{2}(\xi,s)=\int_{\mathbb{R}^2}e^{is\Phi(\xi,\eta,\sigma)}\partial\hat{f}_{k_1}(\xi+\eta,s)\hat{f}_{k_2}(\xi+\sigma,s)\hat{\bar{f}}_{k_3}(-\xi-\eta-\sigma,s) m_4(\eta,\sigma)d\eta d\sigma
		\\
		& G_{2}(\xi,s)=\int_{\mathbb{R}^2}e^{is\Phi(\xi,\eta,\sigma)}\hat{f}_{k_1}(\xi+\eta,s)\hat{f}_{k_2}(\xi+\sigma,s)\partial\hat{\bar{f}}_{k_3}(-\xi-\eta-\sigma,s) m_4(\eta,\sigma)d\eta d\sigma
		\nonumber
	\end{align}
	and
	\begin{equation*}
		m_4(\eta,\sigma)=\frac{1}{s\partial_\eta\Phi(\xi,\eta,\sigma)}\varphi_{[k_1-1,k_1+1]}(\xi+\eta)\varphi_{[k_3-1,k_3+1]}(\xi+\eta+\sigma)\varphi_{[k_2-4,k_2+4]}(\sigma).
	\end{equation*}

	Using \eqref{300}, it follows that
	\begin{equation*}
		||\mathcal{F}^{-1} m_4||_{L^1} \lesssim 2^{-m}2^{k_2(1-\alpha)},
	\end{equation*}
	\begin{equation*}
		||\mathcal{F}^{-1} \partial_\eta m_4||_{L^1} \lesssim 2^{-m}2^{-k_2\alpha},
	\end{equation*}
	Therefore, by passing to the physical space and estimating it by Minkovski-inequality, we have
	\begin{align*}
		|J_{2}(\xi,s)|
		&
		\lesssim \varepsilon_1 2^{-\frac{m}{2}} \cdot \varepsilon_1 2^{p_0m}2^{-N_0k_2^+} \cdot \varepsilon_1 2^{p_0m}2^{-N_0k_3^+} \cdot 2^{-m}2^{-k_2\alpha}
		\\
		&
		\lesssim \varepsilon^3_1 2^{-m} 2^{p_0m}2^{-10k^+},
	\end{align*}
	\begin{align*}
		|F_{2}(\xi,s)|
		&\lesssim \varepsilon_1 2^{-k_1}2^{p_0m} \cdot  \varepsilon_1 2^{-\frac{m}{2}} \cdot \varepsilon_1 2^{p_0m}2^{-N_0k_3^+}\cdot 2^{-m}2^{k_2(1-\alpha)}
		\\
		&\lesssim \varepsilon^3_1 2^{-m} 2^{p_0m}2^{-10k^+},
	\end{align*}
	\begin{align*}
		|G_{2}(\xi,s)|
		&\lesssim \varepsilon_1 2^{-\frac{m}{2}} \cdot \varepsilon_1 2^{p_0m}2^{-N_0 \max\{k_1^+,k_2^+\}} \cdot \varepsilon_1 2^{-k_3} 2^{p_0m}\cdot 2^{-m}2^{k_2(1-\alpha)}
		\\
		&\lesssim \varepsilon^3_1 2^{-m} 2^{p_0m}2^{-10k^+}.
	\end{align*}
	Hence, we conclude that
	\begin{align*}
		|J_{2}(\xi,s)|+|F_{2}(\xi,s)|+|G_{2}(\xi,s)| 
		&\lesssim
		\varepsilon^3_1 2^{-m}2^{-p_0m}2^{-10k^+}.
	\end{align*}
	when $k_1,k_2,k_3 \in [-\frac{3}{8\alpha}m, \frac{53p_0+1}{7}m]$, $2^{k_1} \approx 2^{k_2} \approx 2^{k_3}$, and $|k|\leq 5p_0m$.
\end{proof}
\begin{Lemma}
	The bound \eqref{final} holds provided that
	\begin{align}
		&k_1 \in \left[-4m, -\frac{3}{8\alpha}m \right], \ k_1 +k_2 \leq -1.2m,
		\nonumber
		\\
		&k_2,k_3 \in \left[-4m, \frac{53p_0+1}{7}m \right],
		\nonumber
		\\
		& \max \{|k_1-k|,|k_2-k|,|k_3-k|\} \geq 16.	
	\end{align}
\end{Lemma}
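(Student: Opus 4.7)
The plan is to dispose of this case by a clean ``volume times $L^\infty$'' estimate on $I_{k_1,k_2,k_3}$, without any integration by parts in $\Phi$; the saving $2^{k_1+k_2}\le 2^{-1.2m}$ coming from the measure of the support is already strong enough. First I observe that the correction term drops out: under the hypothesis $k_1\le -\tfrac{3}{8\alpha}m\le -\tfrac{3}{8}m$, while $|k|\le 5p_0m$, so $|k-k_1|\ge 16$ for $m$ sufficiently large, and hence $\hat f_{k_1}(\xi,s)=0$ for $|\xi|\in [2^k,2^{k+1}]$. This forces the correction
\begin{equation*}
c_0c_*\frac{|\xi|^{2-\alpha}\hat f_{k_1}(\xi,s)\hat f_{k_2}(\xi,s)\hat{\bar f}_{k_3}(-\xi,s)}{s+1}
\end{equation*}
to vanish identically on the relevant $\xi$-range, so only the contribution of $I_{k_1,k_2,k_3}$ itself remains to be controlled.

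For that contribution I use the ordering $k_1\le k_2\le k_3$ fixed at the start of Section 4. The integrand of $I_{k_1,k_2,k_3}(\xi,s)$ is supported in $(\eta,\sigma)$ on the region where $|\xi-\eta|\approx 2^{k_1}$ and $|\eta-\sigma|\approx 2^{k_2}$, which has two-dimensional Lebesgue measure $\lesssim 2^{k_1+k_2}$. Applying the $L^\infty$ bound \eqref{a13} to each of the three factors yields
\begin{equation*}
|I_{k_1,k_2,k_3}(\xi,s)|\lesssim \varepsilon_1^3\,2^{k_1+k_2}\,2^{-10(k_1^++k_2^++k_3^+)}.
\end{equation*}
The triangle inequality $|\xi|\le |\xi-\eta|+|\eta-\sigma|+|\sigma|\lesssim 2^{k_3+1}$ on the support forces $k_3\ge k-O(1)$, so $2^{-10k_3^+}\lesssim 2^{-10k^+}$; combined with $k_1^+=0$ and the hypothesis $k_1+k_2\le -1.2m$ this collapses to
\begin{equation*}
|I_{k_1,k_2,k_3}(\xi,s)|\lesssim \varepsilon_1^3\,2^{-1.2m}\,2^{-10k^+}.
\end{equation*}

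Integrating in $s$ over $[t_1,t_2]\subset [2^m-2,2^{m+1}]$ costs at most a factor $2^{m+2}$, producing a per-index bound of $\varepsilon_1^3\,2^{-0.2m}\,2^{-10k^+}$. The triple sum over $(k_1,k_2,k_3)$ in the prescribed ranges has cardinality $\lesssim m^3$, and since $p_0\le 1/1000$ the exponential saving $2^{-0.2m}$ comfortably absorbs both this polynomial factor and the target $2^{-p_0m}$, yielding \eqref{final}. The only point that really needs care is the support inequality $k_3\ge k-O(1)$; once that is in hand, this case is a straightforward volume-times-sup estimate, which is precisely why no integration by parts in the phase $\Phi$ is required here.
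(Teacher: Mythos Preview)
Your proposal is correct and follows essentially the same route as the paper: bound $|I_{k_1,k_2,k_3}|$ by the measure $2^{k_1+k_2}$ of the $(\eta,\sigma)$-support times the three $L^\infty$ bounds \eqref{a13}, absorb the time integration by $2^m$, and note that the correction term vanishes. Your write-up is in fact slightly more careful than the paper's, since you make explicit the support constraint $k_3\ge k-O(1)$ needed for the factor $2^{-10k^+}$ and you account for the $O(m^3)$ index sum, both of which the paper leaves implicit.
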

\begin{proof}
	In fact, we have
	\begin{align*}
		|\int^{t_2}_{t_1}e^{\textit{i}H(\xi,s)}I_{k_1,k_2,k_3}(\xi,s)ds|
		& \lesssim \sup_{s}2^m \int_{\mathbb{R}^2}|
		\hat{f}_{k_1}(\xi-\eta,s)||\hat{f}_{k_2}(\eta-\sigma,s)||\hat{\bar{f}}_{k_3}(\sigma,s)|d\eta d\sigma
		\\
		&\lesssim  2^m 2^{k_1+k_2}
		 \cdot \varepsilon^3_1 2^{-10(k_1^+ +k_2^+ +k_3^+)}
		\\
		& \lesssim \varepsilon^3_1 2^{-10k^+}2^{-0.2m}
	\end{align*}
	and
	\begin{align*}
		\int^{t_2}_{t_1}e^{iH(\xi,s)}\frac{|\xi|^{2-\alpha}\hat{f}_{k_1}(\xi,s)\hat{f}_{k_2}(\xi,s)\hat{\bar{f}}_{k_3}(-\xi,s)}{s+1}ds=0.
	\end{align*}
	Then we finish our proof of this Lemma.
\end{proof}
\begin{Lemma}\label{11}
	The bound \eqref{final} holds provided that
	\begin{align}
		&k_1 \in \left[-4m, -\frac{3}{8\alpha}m\right],  \ k_1 +k_2 \geq -1.2m,
		\nonumber
		\\
		&k_2,k_3 \in \left[-4m, \frac{53p_0+1}{7}m \right],
		\nonumber
		\\
		& \max \{|k_1-k|,|k_2-k|,|k_3-k|\} \geq 16.	
	\end{align}
\end{Lemma}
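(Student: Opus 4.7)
The plan is to follow the structure of the preceding case but replace the trivial bound by a single integration by parts in $\eta$ whenever the constraint $k_1+k_2\geq -1.2m$ prevents the elementary volume bound from closing.

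As in the preceding lemma, the ``resonant'' term in \eqref{final} drops out: $|\xi|\in[2^k,2^{k+1}]$ together with $\max\{|k_j-k|\}\geq 16$ forces at least one of $\hat{f}_{k_1}(\xi,s)$, $\hat{f}_{k_2}(\xi,s)$, $\hat{\bar{f}}_{k_3}(-\xi,s)$ to vanish identically on the $\xi$-support, so the time integral of the second term in \eqref{final} is $0$, and only $\int^{t_2}_{t_1}e^{iH}I_{k_1,k_2,k_3}\,ds$ must be estimated. The trivial bound used in the previous case yields $2^m 2^{k_1+k_2}\gtrsim 2^{-0.2m}$, which is admissible when $k_1+k_2$ is close to $-1.2m$, but fails as $k_1+k_2$ grows.

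For the remaining values I would exploit that $k_1$ is very negative: from $k_1\le -\tfrac{3m}{8\alpha}$ together with $k_1\leq k_2\leq k_3$ and $k_1+k_2\geq -1.2m$, one has $k_2,k_3\geq -0.6m$, yielding the phase estimate
$$
|\partial_\eta\Phi(\xi,\eta,\sigma)|=\alpha\bigl||\xi+\eta|^{\alpha-1}-|\xi+\eta+\sigma|^{\alpha-1}\bigr|\gtrsim 2^{k_1(\alpha-1)}
$$
on the support of $I_{k_1,k_2,k_3}$. I would then integrate by parts once in $\eta$ exactly as in the earlier $k_3-k_1\geq 5$ case, and control the three resulting terms via Lemma~\ref{m} with the triplet $(\infty,2,2)$: $f_{k_1}$ would be placed in $L^\infty$ through the refined bound $||e^{is(-\Delta)^{\alpha/2}}f_{k_1}||_{L^\infty}\lesssim\varepsilon_1\min\{2^{-m/2},2^{k_1}\}$ from \eqref{441}, while $\hat{f}_{k_2}$, $\hat{f}_{k_3}$, and (when needed) the derivative $\partial_\xi\hat{f}_{k_1}$ would be placed in $L^2$ via \eqref{a11}--\eqref{a12}. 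The multiplier estimates $||\mathcal{F}^{-1}m||_{L^1}\lesssim 2^{-m}2^{k_1(1-\alpha)}$ and $||\mathcal{F}^{-1}\partial_\eta m||_{L^1}\lesssim 2^{-m}2^{-k_1\alpha}$ established previously apply unchanged, and integration in $s$ over a window of length $\lesssim 2^m$ followed by summation over the finite ranges of $k_1,k_2,k_3$ would then yield the target $\varepsilon_1^3 2^{-p_0m}2^{-10k^+}$.

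The main obstacle I expect is the careful balancing at this last step. When $k_1$ is deep below $-m/2$ one must use the low-frequency bound $\varepsilon_1 2^{k_1}$ for $||f_{k_1}||_{L^\infty}$, so that the multiplier factor $2^{k_1(1-\alpha)}$ dominates both the Sobolev-type growth $\varepsilon_1^2 2^{2p_0m}$ of the two $L^2$ factors and the loss $2^{-k_1}$ coming from $\partial_\xi\hat{f}_{k_1}$ in the analogue of the $F_1$ term. The hypotheses $\alpha\leq 1/(1+2p_0)$ and $k_1+k_2\geq -1.2m$ are used in a delicate way to make this balance uniform across the admissible ranges of $k_1$ and $\alpha$, in particular when $\alpha$ is close to $1/(1+2p_0)$; that is where most of the technical work lies.
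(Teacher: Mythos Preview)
Your strategy of integrating by parts once in $\eta$ (as in the $k_3-k_1\geq 5$ case) does not close here, and this is not a matter of delicate balancing: it fails by a power of $t$. The problematic term is the analogue of $F_1$, where the $\eta$-derivative lands on $\hat{f}_{k_1}$. With the multiplier bound $\|\mathcal{F}^{-1}m\|_{L^1}\lesssim 2^{-m}2^{k_1(1-\alpha)}$, the weighted estimate $\|\partial\hat f_{k_1}\|_{L^2}\lesssim\varepsilon_1 2^{-k_1}2^{p_0m}$, and the best available bounds $\varepsilon_1 2^{-m/2}$, $\varepsilon_1 2^{p_0m}$ on the remaining two factors, one gets
\[
|F_1|\lesssim \varepsilon_1^3\,2^{-3m/2-k_1\alpha+2p_0m}.
\]
To reach the target $\varepsilon_1^3 2^{-m-p_0m}$ you would need $-k_1\alpha\leq m/2-3p_0m$. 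But the hypotheses only force $k_1\geq -1.2m-k_2\gtrsim -1.35m$, so for $\alpha$ close to $1$ one has $-k_1\alpha$ as large as $\approx 1.3m$, which misses the required bound by roughly $2^{0.8m}$. Iterating the $\eta$-integration makes things worse: each step trades a gain of $2^{-m}2^{k_1(1-\alpha)}$ for a loss of $2^{-k_1}$, i.e.\ a net factor $2^{-m-k_1\alpha}\gg 1$ in this regime. No rearrangement of the $(L^p,L^q,L^r)$ pairing recovers the missing decay, because the obstruction comes from the unavoidable factor $2^{-k_1}$ in $\|\partial\hat f_{k_1}\|_{L^2}$.

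The paper takes a different route: it integrates by parts in \emph{time} $s$, exploiting a lower bound on the phase $\Phi$ itself (not on $\partial_\eta\Phi$). This produces boundary terms $M_1,M_2$ and a bulk term $M_0$ in which $\partial_s$ falls on $\hat f_{k_j}$ or on $(\Phi+\partial_sH)^{-1}$; those are then controlled using the $\partial_s\hat f_l$ bounds of Lemma~\ref{partialm}. The point is that $\|\partial_s\hat f_{k_1}\|_{L^2}\lesssim\varepsilon_1 2^{-m}2^{3p_0m}$ carries no adverse power of $2^{-k_1}$, so the low-frequency factor never blows up. This normal-form step is the essential new idea in the present range of $k_1$ and is what your proposal is missing.
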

\begin{proof}
	It suffices to prove
	\begin{align}\label{400}
		&\big|\int_{\mathbb{R}^2\times[t_1,t_2] }e^{iH(\xi,s)}e^{is\Phi(\xi,\eta,\sigma)}\hat{f}_{k_1}(\xi+\eta,s)\hat{f}_{k_2}(\xi+\sigma,s)\hat{\bar{f}}_{k_3}(-\xi-\eta-\sigma,s) d\eta d\sigma ds \big|
		\nonumber
		\\
		& \lesssim \varepsilon^3_1 2^{-p_0m}2^{-10k^+},
	\end{align}
	where
	\begin{equation*}
		\Phi(\xi,\eta,\sigma)=|\xi|^\alpha-|\xi+\eta|^\alpha-|\xi+\sigma|^\alpha+|\xi+\eta+\sigma|^\alpha,
	\end{equation*}
	\begin{equation*}
		H(\xi,s)=c_0|\xi|^{2-\alpha}\int^s_0|\hat{f}(\xi,r)|^2\frac{dr}{r+1}.
	\end{equation*}
	Set
	\begin{equation*}
		M(\xi)=\sum_{j=0}^2 M_j(\xi),
	\end{equation*}
	where
	\begin{align*}
		M_0(\xi)=&\int^{t_2}_{t_1}|\int_{\mathbb{R}^2 }e^{is\Phi(\xi,\eta,\sigma)}e^{iH(\xi,s)}\partial_s[\frac{1}{\Phi(\xi,\eta,\sigma)+\partial_s H(\xi,s)}\hat{f}_{k_1}(\xi+\eta,s)
		\\
		& \quad \hat{f}_{k_2}(\xi+\sigma,s)\hat{\bar{f}}_{k_3}(-\xi-\eta-\sigma,s)] d\eta d\sigma ds,
	\end{align*}
	\begin{align*}
		M_1(\xi)=&|\int_{\mathbb{R}^2 }e^{it_1\Phi(\xi,\eta,\sigma)}e^{iH(\xi,t_1)}[\frac{1}{\Phi(\xi,\eta,\sigma)+\partial_s H(\xi,t_1)}\hat{f}_{k_1}(\xi+\eta,t_1)
		\\
		& \quad \hat{f}_{k_2}(\xi+\sigma,t_1)\hat{\bar{f}}_{k_3}(-\xi-\eta-\sigma,t_1)] d\eta d\sigma|,
	\end{align*}
	\begin{align*}
		M_2(\xi)=&|\int_{\mathbb{R}^2 }e^{it_2\Phi(\xi,\eta,\sigma)}e^{iH(\xi,t_2)}[\frac{1}{\Phi(\xi,\eta,\sigma)+\partial_s H(\xi,t_2)}\hat{f}_{k_1}(\xi+\eta,t_1)
		\\
		& \quad \hat{f}_{k_2}(\xi+\sigma,t_2)\hat{\bar{f}}_{k_3}(-\xi-\eta-\sigma,t_2)] d\eta d\sigma|,
	\end{align*}

	Let
	\begin{align*}
		m_5(\eta,\sigma)=&\frac{1}{\Phi(\xi,\eta,\sigma)+\partial_s H(\xi,s)}\varphi_{[k_1-1,k_1+1]}(\xi+\eta)
		\\
		&\qquad \qquad \qquad \qquad \qquad \cdot \varphi_{[k_2-1,k_2+1]}(\xi+\sigma)\varphi_{[k_3-1,k_3+1]}(\xi+\eta+\sigma).
	\end{align*}
	Observing that
	\begin{equation*}
		|\Phi(\xi,\eta,\sigma)| \gtrsim 2^{\frac{k_2}{2}},
	\end{equation*}
	\begin{equation*}
		|\partial_s H(\xi,s)|\lesssim \varepsilon_1^2 2^{k(2-\alpha)}2^{-m}2^{-20k^+},
	\end{equation*}
	we get
	\begin{equation*}
		||\mathcal{F}^{-1}m_5||_{L^1} \lesssim 2^{-\frac{k_2}{2}}.
	\end{equation*}

	For $j=1,2$,  apply Lemma \ref{m} with
	\begin{equation*}
		\hat{b}(\eta)=e^{-it_j|\xi+\eta|^\alpha}\hat{f}_{k_1}(\xi+\eta,t_j),
	\end{equation*}
	\begin{equation*}
		\hat{g}(\sigma)=e^{-it_j|\xi+\sigma|^\alpha} \hat{f}_{k_2}(\xi+\sigma,t_j),
	\end{equation*}
	\begin{equation*}
		\hat{h}(-\eta-\sigma)=e^{it_j|\xi+\eta+\sigma|^\alpha}\hat{\bar{f}}_{k_3}(-\xi-\eta-\sigma,t_j).
	\end{equation*}
	and use (4.7)-(4.9) to conclude
	\begin{align*}
		M_j(\xi,s)
		&\lesssim \varepsilon_1 2^{-\frac{m}{2}}   \cdot \varepsilon_1 2^{-\frac{m}{2}} \cdot \varepsilon_1 2^{-\frac{m}{2}} \cdot 2^{-\frac{k_2}{2}} \cdot 2^{\frac{k_2}{2}} \cdot2^{\frac{k_2}{2}}
		\nonumber
		\\
		&\lesssim \varepsilon_1^3 2^{\frac{k_1}{2}}2^{-\frac{3}{2}m}
		\\
		&\lesssim \varepsilon_1^3 2^{-m}2^{-10k^+}2^{-p_0m}.
	\end{align*}

	As for $M_0$, define
	\begin{equation*}
		M_0(\xi,s)= \int^{t_2}_{t_1}\sum_{j=0}^3M^j_0(\xi,s),
	\end{equation*}
	where
	\begin{align*}
		M^0_0(\xi,s)=& |\int_{\mathbb{R}^2 }e^{is\Phi(\xi,\eta,\sigma)}\frac{\partial^2_sH(\xi,s)}{(\Phi(\xi,\eta,\sigma)+\partial_s H(\xi,s))^2}\hat{f}_{k_1}(\xi+\eta,s)
		\\
		& \quad \hat{f}_{k_2}(\xi+\sigma,s)\hat{\bar{f}}_{k_3}(-\xi-\eta-\sigma,s)d\eta d\sigma|
	\end{align*}
	\begin{align*}
		M^1_0(\xi,s)=\left|\int_{\mathbb{R}^2 }e^{is\Phi(\xi,\eta,\sigma)}m_5(\xi,\sigma)\partial_s\hat{f}_{k_1}(\xi+\eta,s)\hat{f}_{k_2}(\xi+\sigma,s)\hat{\bar{f}}_{k_3}(-\xi-\eta-\sigma,s)d\eta d\sigma \right|
	\end{align*}
	\begin{align*}
		M^2_0(\xi,s)=\left|\int_{\mathbb{R}^2 }e^{is\Phi(\xi,\eta,\sigma)}m_5(\xi,\sigma)\hat{f}_{k_1}(\xi+\eta,s)\partial_s\hat{f}_{k_2}(\xi+\sigma,s)\hat{\bar{f}}_{k_3}(-\xi-\eta-\sigma,s)d\eta d\sigma \right|
	\end{align*}
	\begin{align*}
		M^3_0(\xi,s)=\left|\int_{\mathbb{R}^2 }e^{is\Phi(\xi,\eta,\sigma)}m_5(\xi,\sigma)\hat{f}_{k_1}(\xi+\eta,s)\hat{f}_{k_2}(\xi+\sigma,s)\partial_s\hat{\bar{f}}_{k_3}(-\xi-\eta-\sigma,s)d\eta d\sigma \right|
	\end{align*}
	Therefore, by Lemma 3.3, we have
	\begin{align}\label{500}
		|M^1_0(\xi,s)| 
		& \lesssim   \varepsilon_1 2^{3p_0m}2^{-20k_1^+}2^{-m}   \cdot \varepsilon_1 2^{-\frac{m}{2}} \cdot \varepsilon_1 \varepsilon_1 2^{p_0 m}2^{-N_0k_3^+} \cdot 2^{-\frac{k_2}{2}} 
		\nonumber
		\\
		&\lesssim \varepsilon_1^3 2^{-m}2^{-p_0m}2^{-10k^+},
	\end{align}
	where we need $8\alpha <15$.
	Hence, we could get the following estimates
	\begin{align}\label{501}
		|M^2_0(\xi,s)| &
		\lesssim \varepsilon_1^3 2^{-m}2^{-p_0m}2^{-10k^+},
	\end{align}
	\begin{align}\label{502}
		|M^3_0(\xi,s)| &
		\lesssim \varepsilon_1^3 2^{-m}2^{-p_0m}2^{-10k^+}.
	\end{align}

	On the other hand, we have
	\begin{equation*}
		\sup_{s \in [t_1,t_2]}|\partial^2_s H(\xi,s)| \lesssim \varepsilon^2_1 2^{(2-\alpha)k}2^{-20k^+}2^{3p_0m-1.5m},
	\end{equation*}
	Combining with (4.7)-(4.9), it follows that
	\begin{align}\label{503}
		|M^0_0(\xi,s)|
		&\lesssim \varepsilon^3_1 2^{(2-\alpha)k}2^{-20k^+}2^{3p_0m-1.5m} 2^{3p_0m-1.5m}\cdot 2^{k_1+k_2}
		\nonumber
		\\
		& \lesssim \varepsilon^3_1 2^{-m}2^{-p_0m}2^{-10k^+},
	\end{align}
	for $k_1 \in [-4m, -\frac{3}{8\alpha}m],k_2 \in [-4m, \frac{53p_0+1}{7}m]$.

	In view of \eqref{500}-\eqref{503}, we complete the proof of this Lemma.
\end{proof}

\section*{Acknowlegement}
The first author is supported by Education Department of Hunan Province, general Program(grant No. 17C0039); the State Scholarship Fund of China Scholarship Council (No. 201808430121) and Hunan Provincial Key Laboratory of Intelligent Processing of Big Data on Transportation, Changsha University of Science and Technology, Changsha; 410114, China.

\section*{Bibliography}

\end{document}